\newtheorem{theorem}{Theorem}[section]
\newtheorem{corollary}[theorem]{Corollary}
\newtheorem{lemma}[theorem]{Lemma}
\newtheorem{prop}[theorem]{Proposition}
\newtheorem{comentario}[theorem]{Remark}
\theoremstyle{definition}
\newtheorem{definicion}[theorem]{Definition}
\numberwithin{equation}{section}
\newenvironment{myproofprop}[1][Proof of the proposition]{\begin{proof}[#1]}{\end{proof}}
\newcommand{\R}{\mathbb{R}}
\newcommand{\C}{\mathbb{C}}
\newcommand{\N}{\mathbb{N}}
\newcommand{\Z}{\mathbb{Z}}
\newcommand{\SL}{\mathrm{SL}}
\newcommand{\hs}{\mathbb{H}}
\newcommand{\spec}{\mathrm{Spec}}
\newcommand{\proj}{\mathrm{Proj}}
\newcommand{\mP}{\mathbb{P}}
\title{Degenerations of non-simple abelian surfaces}
\author{Nelson Alvarado}
\address{N. Alvarado \\Dipartimento di Matematica, Università degli studi di Roma Tor Vergata , Via della ricerca scientifica, 00133 Roma, Italy}
\email{alvarado@mat.uniroma2.it}
\begin{document}
\maketitle

\begin{abstract}
We study degenerations of non-simple principally polarized abelian surfaces to the boundary in the toroidal compactification of $\mathcal{A}_2$, and describe the degenerate abelian surfaces as well as the degenerate elliptic curves that live inside them. 
\end{abstract}

\section{Introduction}
Given a family of non-simple principally polarized abelian varieties, it is natural to ask how such a family could degenerate and try to understand the limit of the abelian subvarieties as well. For example, in \cite{ABH}, the authors studied degenerations of Prym varieties in the second Voronoi compactification of the moduli space of principally polarized abelian varieties. We recall \cite{Alexeev} that the second Voronoi compactification of $\mathcal{A}_g$ is functorial in the sense that the boundary elements can be interpreted as degenerate abelian varieties. In \cite{ABH}, the authors studied how Jacobian varieties of smooth projective curves that are double covers of another curve degenerate to the boundary in this toroidal compactification, all the while keeping track of how the abelian subvariety corresponding to the associated Prym variety degenerates as well. 

It seems interesting to understand how abelian subvarieties degenerate when the abelian variety they live in degenerates to the boundary of the moduli space. It is this question that we begin to look at in this article, particularly in the case of principally polarized abelian \textit{surfaces}.  

For $g=2$, all known toroidal compactifications of $\mathcal{A}_2$ coincide, and so we will be speaking of \textit{the} toroidal compactification of $\mathcal{A}_2$, and denote it by $\mathcal{A}_2^*$. Our article is separated into the study of two subproblems:
\begin{itemize}
    \item Understand the closure of the moduli space of non-simple principally polarized abelian surfaces in $\mathcal{A}_2^*$
    \item Study families of non-simple principally polarized abelian surfaces that degenerate to the boundary, and understand the degenerations of the elliptic curves. 
\end{itemize}

As for the first point, we give a complete characterization of the boundary points of the moduli space of non-simple principally polarized abelian surfaces. Indeed, our first main result says the following:

\begin{theorem}
Let $m\geq 2$, let $\mathcal{E}_m\subseteq\mathcal{A}_2$ be the moduli space of non-simple principally polarized abelian surfaces that contain an elliptic curve of degree $m$, and let $\mathcal{E}_{m}^{\ast}$ be the closure of $\mathcal{E}_{m}$ in $\mathcal{A}_{2}^{\ast}.$ Then set-theoretically we have that
$$\mathcal{E}_{m}^{\ast} = \mathcal{E}_{m}\sqcup K^{0}(1)[m]\sqcup\mP_{\infty}^{1},$$
where $K^{0}(1)[m]$ is a certain subvariety of the universal Kummer surface over $\mathcal{A}_1$, and $\mP_{\infty}^{1}$ is a projective line that lies on the border. Moreover, $\overline{K^0(1)[m]}\cap\mathbb{P}_\infty^1$ consists of $\varphi(m)+1$ points and the boundary of $\mathcal{E}_m^*\cap\mathcal{E}_n^*$ is $\mathbb{P}_\infty^1$ if $m\neq n$.
\end{theorem}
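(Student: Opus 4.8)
The plan is to reduce both assertions to a computation on the universal Kummer surface, using the structural decomposition $\mathcal{E}_m^* = \mathcal{E}_m \sqcup K^0(1)[m] \sqcup \mP_\infty^1$ already established in the first part of the theorem. Recall that the rank-one boundary of $\mathcal{A}_2^*$ is identified with the universal Kummer surface $K^0(1)$ over $\mathcal{A}_1$, whose fibre over $[E] \in \mathcal{A}_1$ parametrizes (up to the residual action of the cusp stabilizer) the extension datum $q \in E$ of a semiabelian limit $1 \to \mathbb{G}_m \to G \to E \to 1$; under this identification $K^0(1)[m]$ is the locus where the gluing datum has \emph{exact} order $m$, reflecting the degree-$m$ elliptic curve sitting inside the original surface. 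With this description the second assertion is nearly formal: for $m \neq n$ a single torsion datum cannot have exact order $m$ and exact order $n$, so $K^0(1)[m] \cap K^0(1)[n] = \emptyset$ inside the rank-one stratum. Since the decomposition gives $\mP_\infty^1 \subseteq \mathcal{E}_m^* \cap \mathcal{E}_n^*$ while the interiors $\mathcal{E}_m, \mathcal{E}_n$ lie in $\mathcal{A}_2$, the part of $\mathcal{E}_m^* \cap \mathcal{E}_n^*$ meeting $\partial \mathcal{A}_2^*$ is exactly $(K^0(1)[m] \cap K^0(1)[n]) \cup \mP_\infty^1 = \mP_\infty^1$, which is the claim.

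For the first assertion I would pass to the analytic model at the cusp of $\mathcal{A}_1$. Writing the degenerating curve as a Tate curve $E = \mathbb{C}^*/\bar q^{\mathbb{Z}}$ with $\bar q \to 0$, its $m$-torsion is generated by $\mu_m$ together with $\bar q^{1/m}$, so a point of exact order $m$ has the form $\zeta_m^{a}\,\bar q^{b/m}$ with $\gcd(a,b,m)=1$. As $\bar q \to 0$ the family splits into two types: those with $b \equiv 0$ limit to the roots of unity $\zeta_m^{a}$ in the toric part, and—imposing exact order $m$—these are precisely the \emph{primitive} $m$-th roots of unity, indexed by $(\mathbb{Z}/m)^\times$; those with $b \not\equiv 0$ have their parameter escape in the Tate direction ($\bar q^{b/m} \to 0$ or $\infty$) and limit to the node of the degenerate Kummer fibre. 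The first type therefore contributes $\varphi(m)$ points of $\mP_\infty^1$, each recording a surviving isomorphism $\mathbb{Z}/m \xrightarrow{\sim} \mu_m$, and the second type contributes the single nodal point, for $\varphi(m)+1$ in all.

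The main obstacle is to make this limit rigorous inside $\mathcal{A}_2^*$ rather than in the naive analytic fibre, and in particular to pin the count to \emph{exactly} $\varphi(m)+1$. I would write down the stabilizer of the rank-one cusp in $\Sp(4,\mathbb{Z})$, whose action on the boundary coordinates includes the translations $q \mapsto q + \Z + \tau\Z$ and the inversion $q \mapsto -q$, and determine which identifications it actually induces on the deepest stratum. The delicate point is precisely the inversion: a priori it pairs $\zeta_m^{a}$ with $\zeta_m^{-a}$, and the crux is to show that on $\mP_\infty^1$ these two degenerations are \emph{not} identified, so that one obtains $\varphi(m)$ points and not $\varphi(m)/2$. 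I would settle this by choosing the toroidal chart compatibly with the Voronoi fan defining $\mathcal{A}_2^*$ and checking on that chart that the closure of $K^0(1)[m]$ meets $\mP_\infty^1$ in reduced points which stay distinct under the stabilizer, while all Tate-type families collapse to the single node. Once the limit points are identified as these $\varphi(m)$ root-of-unity points together with the node, both the count of the first assertion and the disjointness used for the second follow; I expect the bookkeeping of the cusp stabilizer, rather than any deeper geometric input, to be the genuinely delicate step.
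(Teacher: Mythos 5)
Your proposal has two genuine gaps. First, you take the decomposition $\mathcal{E}_m^* = \mathcal{E}_m \sqcup K^0(1)[m]\sqcup\mathbb{P}^1_\infty$ as ``already established,'' but it is part of the statement being proved, and it is where most of the paper's work lies: one needs (i) the lemma that $\overline{e_r(\mathbb{E}_m)} = \bigcup_v \overline{e_r(\mathbb{H}_2(v))}$ --- not a formality, since $\mathbb{E}_m$ is a countably infinite union of the surfaces $\mathbb{H}_2(v)$ and closure does not commute with infinite unions; the paper proves this using proper discontinuity of $P''(F)$ and local finiteness of the irreducible components of analytic sets --- and (ii) the explicit limit computations in the corank 1 and corank 2 charts (only the vectors $v=(0,\pm m,c,0,e)$ contribute corank-1 limit points, which is what identifies the boundary stratum as exactly $K^0(1)[m]$; and the matrices $T_{z,\tau}$ attached to $v=(1,-(m-2),-(m-1),0,0)$ sweep out the whole axis, which is what gives $\mathbb{P}^1_\infty$). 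None of this is addressed in your proposal.

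Second, and more decisively, your count of $\varphi(m)+1$ is not actually proved: you correctly isolate the crux --- whether the residual action (the Kummer inversion) identifies the limit points coming from $\zeta_m^{a}$ and $\zeta_m^{-a}$, which would yield $\varphi(m)/2+1$ instead --- but you only promise to ``check on a suitable chart,'' and nothing in your Tate-curve picture can decide it; indeed, on the naive limit of Kummer fibres the inversion $w\mapsto w^{-1}$ \emph{does} identify $\zeta_m^{a}$ with $\zeta_m^{-a}$, so the heuristic you set up points toward the wrong answer. The paper's proof of the corresponding proposition is precisely this missing computation: it pushes the curves $\{(t^b\rho^a,t^m)\}$ through the toric charts $(u,v)\mapsto(uv^{-n},u^{-1}v^{n+1})$, shows that only $n\in\{-1,0\}$ produce boundary points, reduces to $n=0$ by the group action, and then observes that the residual identification on the relevant $\mathbb{C}^2$ is the coordinate swap $(x,y)\sim(y,x)$ (realized by $(x,y)\mapsto(x+y,xy)$), under which the $b=0$ limit points $(\rho^a,0)$ with $\gcd(a,m)=1$ remain pairwise distinct --- the swap sends $(\rho^a,0)$ to $(0,\rho^a)$; it does not invert the root of unity --- while all branches with $b\neq 0$ collapse to the single point $(0,0)$. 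That chart-level bookkeeping is exactly the step you defer, and it is the entire content of the count. (Your argument for $K^0(1)[m]\cap K^0(1)[n]=\varnothing$ via exact order, and the deduction that the boundary of $\mathcal{E}_m^*\cap\mathcal{E}_n^*$ is $\mathbb{P}^1_\infty$, do coincide with the paper's.)
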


We remark that the intersection of the closure of $K^{0}(1)[m]$ with $\mP_\infty^{1}$ appears to be quite complicated with nasty singularities. It may be interesting to study in the future. 

As for the second part of the article, we take the boundary points studied in the above theorem and look at degenerate abelian surfaces lying above them using a construction of Mumford \cite{complete-deg} (see \cite{Hulek} as well). Among other properties, we obtain the following result:

\begin{theorem}
Using Mumford's construction, the pairs that consist of a degenerate abelian surface $X$ along with the degenerate subelliptic curve $F$ lying over the boundary of $\mathcal{E}_m^*$ are the following:
\begin{enumerate}
    \item $X$ is a $\mathbb{P}^1$-bundle over an elliptic curve and $E$ is an $m$-gon of $\mathbb{P}^1$s. 
    \item $X=\mathbb{P}^1\times\mathbb{P}^1$ and $F$ is a nodal curve if $m=2$
    \item $X=\mathbb{P}^1\times\mathbb{P}^1$ and $F$ is an $(m-1)$-gon of $\mathbb{P}^1$s if $m\geq 3$
    \item $X$ is the union of two copies of $\mathbb{P}^2$ with the blow-up of $\mathbb{P}^2$ at three points, and $F$ is an $(2m-1)$-gon of $\mathbb{P}^1$s.
\end{enumerate}
\end{theorem}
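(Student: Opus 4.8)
The plan is to compute, stratum by stratum, the degenerate surface produced by Mumford's construction \cite{complete-deg} (cf. \cite{Hulek}) over each boundary component identified in Theorem 1.1. Recall that the input of the construction over a given boundary point is the toric degeneration data: a lattice $Y\cong\Z^r$, whose rank $r$ equals the torus rank of the semiabelian limit, a $Y$-periodic Delaunay decomposition of $Y_\R$, and a period/polarization homomorphism; the special fiber of the relatively complete model is then obtained by gluing the toric varieties attached to the cells of this decomposition along their toric strata, and the degenerate subelliptic curve $F$ is the flat limit of the family of elliptic subcurves. By Theorem 1.1 the relevant boundary splits into the corank-one locus $K^0(1)[m]$, over which the semiabelian limit has one-dimensional abelian part, and the deepest stratum $\mathbb{P}_\infty^1$, over which the limit is totally degenerate ($r=2$). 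I would treat these two cases separately.

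Over $K^0(1)[m]$ the abelian part is an elliptic curve $B$ and $Y\cong\Z$, so the Delaunay decomposition of $\R$ is the standard one into unit intervals. Mumford's construction then glues a $\mathbb{P}^1$-bundle over $B$ (the completion of the $\mathbb{G}_m$-extension) to itself along its two sections by the translation recorded in the period map, yielding the $\mathbb{P}^1$-bundle of case (1). To locate $F$, I would express the degree-$m$ subelliptic curve through its period data and observe that it meets the torus boundary with multiplicity governed by $m$; passing to the limit, it degenerates into a cycle of $m$ rational curves, that is, an $m$-gon, establishing (1).

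Over $\mathbb{P}_\infty^1$ we have $r=2$, and the combinatorial input is a $\Z^2$-periodic Delaunay decomposition of $\R^2$ attached to the cone indexing the boundary point. For $g=2$ there are exactly two $\mathrm{GL}_2(\Z)$-types: the generic points of $\mathbb{P}_\infty^1$ correspond to a two-dimensional cone whose decomposition is the square tiling, while the finitely many special points, lying in a top-dimensional cone where $\overline{K^0(1)[m]}$ meets the border, correspond to the triangular (or $A_2$) tiling. The toric pieces of the square tiling assemble to $\mathbb{P}^1\times\mathbb{P}^1$, and those of the triangular tiling assemble to the union of two copies of $\mathbb{P}^2$ with the blow-up of $\mathbb{P}^2$ at three points; this reads off the ambient surfaces of (2)--(4). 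To find $F$, I would represent it as a cycle on the combinatorial torus $\R^2/\Z^2$ whose homology class is fixed by the degree $m$, and count its transverse intersections with the edges of the tiling, since this count is the number of components of the limit cycle. In the square tiling the count is $m-1$, producing an $(m-1)$-gon for $m\geq 3$ and, when $m=2$, a single rational curve with one node; in the triangular tiling the finer edge set roughly doubles the count, yielding a $(2m-1)$-gon, which gives (2), (3) and (4).

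The main obstacle will be the precise bookkeeping for $F$: one must pin down the period homomorphism of the elliptic subcurve inside the global degeneration data of $X$, rather than of $X$ alone, and then carry out the count of components carefully enough to see both the dependence on $m$ and the qualitative jump between the nodal curve at $m=2$ and the genuine polygons for $m\geq 3$. Verifying that these combinatorial cycles are exactly the flat limits of the subelliptic curves, and not proper subcurves of them, is where I expect the argument to need the most care; I would confirm it by comparing the arithmetic genus and the number of nodes of the proposed limit with those forced by flatness.
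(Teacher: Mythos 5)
Your route---recovering the degenerate surfaces from abstract degeneration data (lattice $Y$, Delaunay decomposition, period homomorphism) and the limit curve from a cycle on $\R^2/\Z^2$ crossing the tiling---is genuinely different in flavor from the paper's proof, which never invokes Delaunay decompositions: the paper works inside the explicit model $\widetilde{P}=\proj R_{\Phi,\Sigma}$ of \cite{Hulek}, realizes the subelliptic curves as the closures $\widetilde{Q},\widetilde{Q}_\infty$ of the explicit subschemes $\{U=1\}$ and $\{W=1\}$ of the torus (these come from the subspaces $W_{(c,e)}=\langle(1,0)\rangle$ and $W_\infty=\langle(-1,1)\rangle$ supplied by the numerical characterization in \cite{num-char}), reads the fibers of $\widetilde{P}$ off \cite[Part II, Prop.\ 2.15]{Hulek}, and then divides by the explicit subgroups $\mathbb{Y}_c=(r^cs^m)$ and $\mathbb{Y}_\infty=(r^{-(m-1)}s)$ of the period group $\mathbb{Y}$. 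However, your proposal has a genuine error exactly where the content of items (3)--(4) lies. The fiber of Mumford's model over the deepest point is \emph{not} dual to the triangular ($A_2$) tiling. The $2$-cells of that tiling are unimodular triangles, each contributing a $\mathbb{P}^2$, so gluing its toric pieces yields a union of two $\mathbb{P}^2$'s per fundamental domain and nothing else; no amount of gluing triangles can produce a blow-up of $\mathbb{P}^2$ at three points. In the model the theorem is actually about (Proposition \ref{fibras-tilde} and Figure \ref{P-tilde0}), the fiber over $T=0$ is dual to a paving of $\R^2$ having two triangles \emph{and one hexagon} per $\Z^2$-fundamental domain, and it is the hexagonal cell that produces the del Pezzo component in item (4). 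Carried out as written, your construction would prove a statement contradicting (4). (Your answer would be the correct stable limit in the Alexeev--Namikawa sense, but that is a different model from Mumford's.)

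The same misidentification breaks your count for $F$. With the square tiling your count of $m-1$ crossings matches (2)--(3), but at the deepest point "the finer edge set roughly doubles the count" gives $2(m-1)=2m-2$, not the asserted $2m-1$; this off-by-one is not a bookkeeping slip you can absorb later, it is precisely the discrepancy created by using the wrong paving, and it cannot be repaired inside your setup. The paper obtains $2m-1$ by intersecting $\widetilde{Q}_\infty$ with each irreducible component of $\widetilde{P}_0$ (two $\mathbb{P}^2$'s and a hexagon per fundamental domain) and then identifying points under the cyclic group $\mathbb{Y}_\infty=(r^{-(m-1)}s)$; the analogous computation with $\mathbb{Y}_c=(r^cs^m)$, where $s$ shifts the components $Z_k$ of the corank-$1$ chain, is what yields the $m$-gon in (1). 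So two things are missing before your argument closes: (i) replace the $A_2$ tiling by the correct two-triangles-plus-one-hexagon paving attached to Mumford's family, and (ii) pin down the class of the subelliptic curve concretely (as the paper does via \cite{num-char}) so that the crossing count with that paving can actually be performed rather than estimated.
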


The structure of this article is as follows: In Section \ref{preliminaries} we go over the necessary preliminaries for non-simple principally polarized abelian surfaces, as well as the toroidal compactification of $\mathcal{A}_2$. In Section \ref{compactification} we describe the compactification of $\mathcal{E}_m$ in $\mathcal{A}_2^*$, and in Section \ref{degeneration} we look at the actual families that are being degenerated to the boundary. 

\textbf{Acknowledgments}

I would like to thank to my advisor Dr. Robert Auffarth, who suggested me this problem, for his many helpful comments regarding both the content and redaction of this article. 

This work was funded by a scolarship of the Mathematics department of Universidad de Chile and by the MIUR Excellence Department Project
MATH@TOV,
awarded to the Department of Mathematics, University of Rome, Tor
Vergata,
CUP E83C18000100006.

\section{Preliminaries}\label{preliminaries}

In this section we will review some preliminaries on non-simple principally polarized abelian surfaces and toroidal compactifications.

\subsection{Non-simple principally polarized abelian surfaces} Let $\mathcal{A}_2$ denote the (coarse) moduli space of principally polarized abelian surfaces, and define 
\[\mathcal{E}_m:=\{(A,\Theta)\in\mathcal{A}_2:A\text{ contains en elliptic curve }E\text{ with }\deg(\Theta|_E)=m\}.\]

The space $\mathcal{A}_2$ can be described as a quotient of the Siegel upper-half space $\mathbb{H}_2$ by the symplectic group $\mathrm{Sp}(4,\mathbb{Z})$; let
\[p:\mathbb{H}_2\to\mathcal{A}_2\]
denote the natural projection.

It is a now classical fact (see, for instance \cite[Corollary 5.5]{Kani}) that $\mathcal{E}_m$ is irreducible of dimension 2, and a matrix $(\tau_{ij})\in\mathbb{H}_2$ lies in $p^{-1}(\mathcal{E}_m)$ if and only if there exists a primitive vector $(a,b,c,d,e)\in\mathbb{Z}^5$ such that the following two equations are satisfied:
\begin{eqnarray}\label{1}m^2&=&b^2-4(ac+de)\\
\label{2}0&=&a\tau_{11}+b\tau_{12}+c\tau_{22}+d(\tau_{11}\tau_{22}-\tau_{12}^2)+e
\end{eqnarray}

Because of this, following \cite{num-char}, if $v:=(a,b,c,d,e)\in\mathbb{Z}^5$ is a vector that satisfies equation \eqref{1}, then we define
\[\mathbb{H}_2(v):=\{\tau\in\mathbb{H}_2:\tau\text{ satisfies }\eqref{2}\}.\]
Therefore 
\[\mathbb{E}_m:=p^{-1}(\mathcal{E}_m)=\bigcup_{v}\mathbb{H}_2(v)\]
where the union goes over all $v$ that satisfy equation \eqref{1}. By \cite[Lemma 3.7]{num-char}, we have that for every primitive $v\in\mathbb{Z}^5$ that satisfies equation \eqref{1},
\[\mathcal{E}_m=p(\mathbb{H}_2(v)).\]

\subsection{Toroidal compactifications} In this subsection we will briefly outline the necessary notations and terms we will use in what follows with regards to toroidal compactifications. For a full treatment of the toroidal compactification of $\mathcal{A}_2$ we recommend the reader consult \cite{Hulek}. In particular the notations we will use in this article come from this book. For a more general treatment we recommend the reader consult \cite{mumfordetal} or \cite{Namikawa}.

Throughout this subsection $F$ denotes a rational boundary component of $\hs_{2}$ (see \cite[Definition 3.5]{Hulek},and the remark below it). Starting from a collection 
$$\Sigma = \{\Sigma(F) : F \hspace{0.2cm}\text{rational boundary component}\}$$
of fans in the vector space $\mathrm{Sym}(2,\R)$ of $2\times 2$ real symmetric matrices satisfying certain \emph{admisibility condition} (see \cite[Definition 3.66]{Hulek}) we can obtain a so called toroidal compactification $\mathcal{A}_{2}^{\Sigma}.$ Throughout this work we write $\mathcal{A}_{2}^{\ast}$ for $\mathcal{A}_{2}^{\Sigma}$ when 
$\Sigma$ is the Legendre decomposition defined in \cite[Part I,Definition 3.117]{Hulek}.       

More concretely, the toroidal compactification $\mathcal{A}_{2}^{\ast}$ is obtained by gluing \emph{partial compactifications} $Y_{\Sigma(F)}(F)$ in a certain way, where $F$ runs over the set of rational boundary components of $\hs_{2}$ and $\Sigma(F)$ is a fan in $\mathrm{Sym}(2,\R)$ as in the previous paragraph. 

Each partial compactification is obtained by the following procedure:  
\begin{enumerate}
\item Take the partial quotient $X(F):= P^{\prime}(F)\setminus\hs_{2},$ where $P^{\prime}(F)$ is certain subgroup of $\mathrm{Sp}(4,\Z)$ defined in \cite[Definition 3.48]{Hulek}. 
\item Consider certain space $\mathfrak{X}_{\Sigma(F)}$ defined in \cite[Definition 3.52]{Hulek} which is obtained using the complex toric variety associated to the fan $\Sigma(F)$ and compute the closure $X_{\Sigma(F)}$ of $X(F)$ in $\mathfrak{X}_{\Sigma(F)}.$
\item Finally, take the quotient $Y_{\Sigma(F)}$ of $X_{\Sigma(F)}$ by the action of the group $P^{\prime\prime}(F)$ defined in \cite[Definition 3.48]{Hulek}. We will write 
\begin{equation}
\label{defqF}
q_{F}: X_{\Sigma(F)}\to Y_{\Sigma(F)} 
\end{equation}
for the corresponding quotient map.
\end{enumerate}

As we said before, $\mathcal{A}_{2}^{\ast}$ is obtained gluing the partial compactifications $Y_{\Sigma(F)}.$ That is, $\mathcal{A}_{2}^{\ast}$ is a quotient of the disjoint union of the partial compactifications under certain equivalence relation (\cite[Definition 3.74]{Hulek}). For a rational boundary component $G$ we will write 
\begin{equation}
\label{defpF}
p_{G}^{\ast}: Y_{\Sigma(G)}\rightarrow\mathcal{A}_{2}^{\ast}
\end{equation}
for the composition
$$Y_{\Sigma(G)}\hookrightarrow\coprod_{F} Y_{\Sigma(F)}\twoheadrightarrow\mathcal{A}_{2}^{\ast},$$
where the right arrow is the corresponding quotient map. 

Now, actually, to obtain $\mathcal{A}_{2}^{\ast}$ it is enough to study the partial compactifications corresponding to three distinguished rational boundary components: $F_{0} = \hs_{2},F_{1}\cong\hs$ and $F_{2} = \{\mathrm{id}\}.$ The component $F_k$ will be called the \textbf{corank $k$ boundary component}. It happens that the partial compactification $Y_{\Sigma(F_0)}$ is just $\mathcal{A}_{2},$ but the components $F_{1}$ and $F_{2}$ add new boundary points.

\begin{definicion}
We write 
$$\partial_{F_1}\mathcal{A}_{2}^{\ast} := p_{F_1}^{\ast}\left(Y_{\Sigma(F_1)}\right)-\mathcal{A}_{2} = p_{F_1}\left(P^{\prime\prime}(F_1)\setminus\partial X_{\Sigma(F_1)}\right),$$
where $\partial X_{\Sigma(F_1)} = X_{\Sigma(F_1)}-X(F_1)$ is the usual topological boundary.
We also write
$$\partial_{F_2}\mathcal{A}_{2}^{\ast} = p_{F_2}^{\ast}\left(Y_{\Sigma(F_2)}\right)-\left(\mathcal{A}_{2}^{\ast}\cup\partial_{F_1}\mathcal{A}_{2}^{\ast}\right).$$
\end{definicion}

We may think of $\partial_{F_k}$ as ``the set which is added to the boundary of $\mathcal{A}_{2}^{\ast}$ as a contribution of the corank $k$ boundary component''.

\subsubsection{Corank 1 boundary component}

For the corank 1 boundary component we have that the group $P^{\prime}(F_1)$ is isomorphic to $\Z$ and the quotient $X(F_1)$ can be identified with the image of the map $e_{1}:\hs_{2}\to\C^{\times}\times\C\times\hs$ given by 
$$\begin{pmatrix}
    \tau_{1} & \tau_{2} \\
    \tau_{2} & \tau_{3} \\
   \end{pmatrix}\longrightarrow (e^{2\pi i\tau_1},\tau_{2},\tau_{3}).$$  
The space $\mathfrak{X}_{\Sigma(F_1)}$ is nothing but $\C\times\C\times\hs$ and $\partial X_{\Sigma(F_1)} = \{0\}\times\C\times\hs.$ 

\begin{definicion}
\label{Kcerouno}
Let $K^{0}(1)$ be the the surface defined by the quotient of $\C\times\hs$ by the equivalence relation which identifies $(z,\tau)$ with  
$$\left(\frac{\varepsilon z +m\tau+n}{c\tau+d},\frac{a\tau+b}{c\tau+d}\right),$$
for $\varepsilon\in\{\pm 1\},m,n\in\Z$ and $\begin{pmatrix} 
a & b \\
c & d \\
\end{pmatrix}\in\SL(2,\Z).$
\end{definicion}

Note that we have a fibration $K^{0}(1)\to\mathcal{A}_{1}$ whose fiber over $[\tau]\in\mathcal{A}_{1}$ is $E_{\tau}/\langle z\mapsto -z\rangle,$ whenever $\mathrm{Stab}_{\SL(2,\Z)}(\tau)=\{\pm\mathrm{id}\}.$

Studying the action of the group $P^{\prime\prime}(F_1)$ on $\{0\}\times\C\times\hs = \partial X_{\Sigma(F_1)}$ (as in \cite[Part I, Proposition 3.101]{Hulek}) one can easily see that the quotient is precisely the surface $K^{0}(1)$ defined above. 

\subsubsection{Corank 2 boundary component}

The case of corank 2 is a little subtler. In this case the partial quotient $X(F_2)$ can be identified with the image of the map $e_{2}:\hs_{2}\to(\C^{\times})^{3}$ given by 
$$\begin{pmatrix}
    \tau_{1} & \tau_{2} \\
    \tau_{2} & \tau_{3} \\
   \end{pmatrix}\mapsto (e^{2\pi i\tau_1},e^{2\pi i\tau_{2}},e^{2\pi i\tau_{3}})$$
and the space $\mathfrak{X}_{\Sigma(F_2)}$ is the toroidal embedding $(\C^{\times})^{3}\subseteq T_{\Sigma(F_2)}$ associated to the Legendre decomposition defined in \cite[Definition 3.117]{Hulek}. 

The toroidal embedding $(\C^{\times})^{3}\subseteq T_{\Sigma(F_2)}$ is locally described by immersions $\iota_{n}: (\C^{\times})^{3}\hookrightarrow T_{n}\cong \C^{3}$ given by 
$$(t_{1},t_{2},t_{3})\mapsto(t_{1}t_{2}^{-(2n+1)}t_{3}^{n(n+1)},t_{2}t_{3}^{-n},t_{2}^{-1}t_{3}^{n+1}),$$
where $n\in\Z.$
By \cite[Remarks 3.149 and 3.156]{Hulek} we have that $\partial_{F_2}$ is a certain quotient of the axis $\{0\}\times\C\times\{0\}\subseteq T_{-1}$ and is isomorphic to $\mP^{1}.$ 

Summarizing, we have the following:

\begin{theorem}
\label{Adosestrella}
Let $\mathcal{A}_{2}^{\ast}$ be the toroidal compactification of $\mathcal{A}_{2}^{\ast}$ associated to the Legendre decomposition. Set-theoretically we have that
$$\mathcal{A}_{2}^{\ast} = \mathcal{A}_{2}\sqcup K^{0}(1)\sqcup\mP_{\infty}^{1},$$
where: 
\begin{itemize}
\item $K^{0}(1)$ is the relative Kummer surface $K^{0}(1)\to\mathcal{A}_{1}$ defined in Definition \ref{Kcerouno}, which is a certain quotient of $\{0\}\times\C\times\hs\subseteq\mathfrak{X}_{\Sigma(F_1)}$
\item $\mP_{\infty}^1$ is a copy of $\mP^{1}$ which is a quotient of the axis  $\{0\}\times\C\times\{0\}\subseteq\C^{3}\cong T_{-1}\subseteq\mathfrak{X}_{\Sigma(F_2)}.$
\end{itemize}
\end{theorem}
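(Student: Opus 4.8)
The plan is to exploit the stratification of $\mathcal{A}_2^{\ast}$ by the corank of the rational boundary components and to assemble the three computations carried out immediately above the statement. First I would recall that every rational boundary component of $\hs_2$ is $\Sp(4,\Z)$-conjugate to exactly one of the three standard components $F_0 = \hs_2$, $F_1\cong\hs$, $F_2 = \{\mathrm{id}\}$, so that to build $\mathcal{A}_2^{\ast}$ it suffices to glue the three partial compactifications $Y_{\Sigma(F_0)}$, $Y_{\Sigma(F_1)}$, $Y_{\Sigma(F_2)}$. Since $\partial_{F_1}\mathcal{A}_2^{\ast}$ and $\partial_{F_2}\mathcal{A}_2^{\ast}$ are defined precisely by removing, respectively, $\mathcal{A}_2$ and $\mathcal{A}_2\cup\partial_{F_1}\mathcal{A}_2^{\ast}$, one gets for free the set-theoretic disjoint decomposition
$$\mathcal{A}_2^{\ast} = \mathcal{A}_2 \sqcup \partial_{F_1}\mathcal{A}_2^{\ast} \sqcup \partial_{F_2}\mathcal{A}_2^{\ast},$$
in which the corank $0$ piece is $Y_{\Sigma(F_0)} = \mathcal{A}_2$. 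Thus the theorem reduces to identifying the two boundary contributions.

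For the corank $1$ contribution I would use the explicit chart $e_1\colon\hs_2\to\C^\times\times\C\times\hs$ recorded above, together with the identifications $\mathfrak{X}_{\Sigma(F_1)} = \C\times\C\times\hs$ and $\partial X_{\Sigma(F_1)} = \{0\}\times\C\times\hs$. It then remains to compute the quotient of $\{0\}\times\C\times\hs$ by the group $P^{\prime\prime}(F_1)$. Following the description of this action in \cite[Part I, Proposition 3.101]{Hulek}, one checks that the induced identifications on $(z,\tau)\in\C\times\hs$ are exactly those defining $K^0(1)$ in Definition \ref{Kcerouno}: the sign $\varepsilon\in\{\pm1\}$ coming from $-\mathrm{id}$, the lattice translations by $m\tau+n$, and the $\SL(2,\Z)$-action on the base. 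Hence $\partial_{F_1}\mathcal{A}_2^{\ast} = p_{F_1}\!\left(P^{\prime\prime}(F_1)\setminus\partial X_{\Sigma(F_1)}\right)\cong K^0(1)$, and projecting to the $\tau$-coordinate recovers the fibration $K^0(1)\to\mathcal{A}_1$ with generic fiber $E_\tau/\langle z\mapsto -z\rangle$.

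For the corank $2$ contribution I would pass to the toric picture: $X(F_2)$ is the image of $e_2\colon\hs_2\to(\C^\times)^3$, and $\mathfrak{X}_{\Sigma(F_2)}$ is the toroidal embedding $(\C^\times)^3\subseteq T_{\Sigma(F_2)}$ associated with the Legendre decomposition, glued from the charts $\iota_n\colon(\C^\times)^3\hookrightarrow T_n\cong\C^3$. The task is to isolate which toric stratum survives in $\partial_{F_2}\mathcal{A}_2^{\ast}$ after discarding the parts already accounted for by the corank $0$ and corank $1$ strata, and then to take the $P^{\prime\prime}(F_2)$-quotient. Invoking \cite[Remarks 3.149 and 3.156]{Hulek}, the relevant stratum is the one-dimensional axis $\{0\}\times\C\times\{0\}\subseteq T_{-1}$, whose quotient is isomorphic to $\mP^1$; this is the copy $\mP_\infty^1$. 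Collecting the three identifications yields the claimed decomposition.

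The main obstacle I expect is the corank $2$ step: unlike the corank $1$ case, where a single affine chart suffices, here one must track how the charts $T_n$ are glued under the Legendre fan, determine exactly which compactifying torus orbits are genuinely new (i.e.\ not already in the closure of the corank $0$ and corank $1$ strata), and verify that the $P^{\prime\prime}(F_2)$-action collapses the distinguished axis to a single $\mP^1$ rather than to a more degenerate or higher-dimensional object. The bookkeeping of the orbit--cone correspondence under the quotient, rather than any deep geometric input, is where the care is needed; everything else is a direct transcription of the computations recorded immediately before the statement.
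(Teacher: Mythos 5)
Your proposal is correct and takes essentially the same route as the paper: the paper's entire proof is a citation of \cite[Part I, Chapter 3, Theorem 3.151]{Hulek} and the remarks following it, and your three-stratum assembly (corank $0$ giving $\mathcal{A}_2$, the $P''(F_1)$-quotient of $\{0\}\times\C\times\hs$ giving $K^0(1)$ via \cite[Part I, Proposition 3.101]{Hulek}, and the axis in $T_{-1}$ giving $\mP^1_\infty$ via \cite[Remarks 3.149 and 3.156]{Hulek}) is exactly the content of that citation, already recorded in the paper's preliminaries. The only difference is that you unfold the citation into its constituent steps rather than quoting it wholesale.
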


\begin{proof}

See \cite[Part I, Chapter 3, Theorem 3.151]{Hulek} and the remarks below it.

\end{proof}

\begin{definicion}
The $\mP_{\infty}^{1}$ from the above theorem will be called the \emph{peripheral $\mP^1$}.
\end{definicion}

\section{Compactification of $\mathcal{E}_{m}$}\label{compactification}

In this section we will determine the closure $\mathcal{E}_{m}^{\ast}$ of $\mathcal{E}_m$ in the toroidal compactification $\mathcal{A}_2^*$.

Again let $\mathbb{E}_{m} = p^{-1}(\mathcal{E}_{m}),$ where $p:\mathbb{H}_{2}\to\mathcal{A}_{2}$ is the canonical projection. In order to find the closure of $\mathcal{E}_m$ in $\mathcal{A}_2^*$ we have to compute the closure of the image of $\mathbb{E}_{m}$ in $X_{\Sigma(F_r)}$ for $r\in\{1,2\}.$ We denote by $e_{r}:\mathbb{H}_{2}\to X(F_r) = P^{\prime}(F_r)\setminus\mathbb{H}_{2}$ the partial quotient of corank $r.$

Using the notation of the previous section, we have that
$$\mathbb{E}_{m} = \bigcup_{v} \mathbb{H}_{2}(v),$$
where the union runs over the elements $v\in\Z^{5}$ which satisfy equation \eqref{1}. In what follows, when we use the notation $\mathbb{H}_2(v)$ we will \textbf{always} assume that $v$ satisfies equation \eqref{1}, and so we will frequently leave it out. 

In particular, we have that
$$\overline{e_{r}(\mathbb{E}_{m})}\supseteq\bigcup_{v}\overline{e_{r}(\mathbb{H}_{2}(v))},$$ 
where the closure is taken in $X_{\Sigma(F_r)}$ (which can naturally be seen as a subset of $\C^{3}$). What makes our calculations easier is the following result:

\begin{lemma}
We have that
$$\overline{e_{r}(\mathbb{E}_{m})} = \bigcup_{v}\overline{e_{r}(\mathbb{H}_{2}(v))}.$$ 
\end{lemma}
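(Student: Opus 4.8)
The plan is to reduce the statement to a local finiteness assertion and then verify the latter stratum by stratum. The inclusion $\supseteq$ is already recorded before the lemma, so only $\subseteq$ remains. Observe that $\bigcup_v\overline{e_r(\mathbb{H}_2(v))}$ contains $e_r(\mathbb{E}_m)$ and is contained in $\overline{e_r(\mathbb{E}_m)}$; hence if I can show that $\bigcup_v\overline{e_r(\mathbb{H}_2(v))}$ is closed in $X_{\Sigma(F_r)}$, it must coincide with $\overline{e_r(\mathbb{E}_m)}$ and we are done. Since each $\overline{e_r(\mathbb{H}_2(v))}$ is closed by construction, and a locally finite union of closed sets is closed, the whole lemma reduces to proving that the family $\{\overline{e_r(\mathbb{H}_2(v))}\}_v$ is locally finite: every point of $X_{\Sigma(F_r)}$ has a neighborhood meeting only finitely many of these closures.

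Over the open stratum $X(F_r)=P'(F_r)\setminus\mathbb{H}_2$ this is soft. The locus $\mathbb{E}_m=p^{-1}(\mathcal{E}_m)$ is a closed analytic subset of $\mathbb{H}_2$, being the preimage of the closed Humbert surface $\mathcal{E}_m$, and is therefore a locally finite union of its irreducible components, which are precisely the hypersurfaces $\mathbb{H}_2(v)$. As $P'(F_r)$ acts properly discontinuously and permutes the $\mathbb{H}_2(v)$, this local finiteness descends to $\{e_r(\mathbb{H}_2(v))\}_v$ on $X(F_r)$.

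The substance of the argument, and the step I expect to be the main obstacle, is local finiteness along the boundary $\partial X_{\Sigma(F_r)}$, where I must analyze by hand how the images approach the boundary divisors using equations \eqref{1} and \eqref{2}. In corank $1$, at a boundary point $(0,\tau_2^0,\tau_3^0)$ with $\tau_3^0\in\mathbb{H}$, for $\overline{e_1(\mathbb{H}_2(v))}$ to meet a small neighborhood one needs points of $\mathbb{H}_2(v)$ with $\mathrm{Im}(\tau_1)\to\infty$ and $\tau_2,\tau_3$ near $\tau_2^0,\tau_3^0$. Rewriting \eqref{2} as $\tau_1(a+d\tau_3)=-(b\tau_2+c\tau_3-d\tau_2^2+e)$ and using that the right-hand side stays bounded forces $|a+d\tau_3|$ to be small; since $\mathrm{Im}(a+d\tau_3)=d\,\mathrm{Im}(\tau_3)$ with $\mathrm{Im}(\tau_3)>0$, this can only occur for $d=0$, whence $a=0$. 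Equation \eqref{1} then pins $b=\pm m$, and requiring the plane $b\tau_2+c\tau_3+e=0$ to pass within $\varepsilon$ of $(\tau_2^0,\tau_3^0)$, again with $\mathrm{Im}(\tau_3^0)>0$, leaves only finitely many integral $(c,e)$. The same circle of ideas, applied to the explicit charts $\iota_n$ and the peripheral $\mathbb{P}^1$, is meant to handle corank $2$.

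The delicate point will be precisely this boundary bookkeeping: making the estimates uniform on a genuine neighborhood rather than just at the central point, and treating the corank-$2$ cusps, where—in contrast to the corank-$1$ finite boundary—vectors with $d\neq 0$ do contribute, so that the count of admissible $v$ remains finite in each chart $T_n$. Once local finiteness is established on every stratum and in every chart, the union $\bigcup_v\overline{e_r(\mathbb{H}_2(v))}$ is closed and the identity follows.
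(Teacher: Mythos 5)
Your reduction is sound as far as it goes: the union $\bigcup_v\overline{e_r(\mathbb{H}_2(v))}$ is squeezed between $e_r(\mathbb{E}_m)$ and its closure, so it suffices to show it is closed, and local finiteness of the family would give that; the open-stratum argument via analyticity of $\mathbb{E}_m\subseteq\mathbb{H}_2$ and proper discontinuity of $P'(F_r)$ is also fine. The problem is that the entire content of the lemma sits in the step you defer, and the corank-1 computation you offer does not prove it. What you compute is which $v$ have $\overline{e_1(\mathbb{H}_2(v))}$ \emph{touching} the boundary (this is exactly the paper's Lemma \ref{tresdos}): you fix $v$, let $\mathrm{Im}(\tau_1)\to\infty$, and conclude $a=d=0$, $b=\pm m$. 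Local finiteness is a different and stronger statement: only finitely many of the sets $e_1(\mathbb{H}_2(v))$ --- \emph{including those whose closures never reach the boundary} --- may meet a fixed neighborhood $\{|t_1|<\delta\}\times B(\tau_2^0,\epsilon)\times B(\tau_3^0,\epsilon)$. For such $v$ one only knows $\mathrm{Im}(\tau_1)>C$ with $C$ finite, and your inequality ``$|a+d\tau_3|\leq(\text{RHS bound})/C$'' is not uniform in $v$, since the bound grows with $|b|,|c|,|d|,|e|$. Here is a concrete reason the argument as written cannot be patched by mere ``bookkeeping'': for $v=(a,b,c,0,e)$ with $a\neq 0$, equation \eqref{1} places \emph{no constraint on $e$}, and on $\mathbb{H}_2(v)$ one has $\mathrm{Im}(\tau_1)=-(b\,\mathrm{Im}(\tau_2)+c\,\mathrm{Im}(\tau_3))/a$, independent of $e$; since the neighborhood constrains $\tau_1$ only through $|t_1|=e^{-2\pi\mathrm{Im}(\tau_1)}$, either \emph{all} of the infinitely many vectors $(a,b,c,0,e)$, $e\in\Z$, meet it, or none do. So local finiteness forces you to prove outright exclusion of every triple $(a,b,c)$ with $a\neq0$ once $C$ is large, which requires using \eqref{1} in an essential quantitative way (e.g.\ a discriminant estimate in $|b|$ that bounds $a$ and then rules it out); your sketch never invokes \eqref{1} except to pin $b=\pm m$ \emph{after} assuming $a=d=0$. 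The corank-2 case, which you only gesture at, is where the combinatorics is genuinely heavier (infinitely many charts $T_n$, and, as you yourself note, vectors with $d\neq0$ contributing), and nothing in the proposal addresses it.

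It is worth contrasting this with how the paper avoids all such estimates. The paper works in the quotient $Y_{\Sigma(F)}$, where the closure of the image of $\mathbb{E}_m$ is an analytic set, invokes Demailly's theorem \cite{Dem} (local finiteness of the decomposition of an analytic set into irreducible components) to get that a small neighborhood meets only finitely many components, and then transfers the conclusion back to $X_{\Sigma(F)}$ using proper discontinuity of $P''(F)$, finiteness of stabilizers, and the fact that any translate $he_F(\mathbb{H}_2(v))$ is again some $e_F(\mathbb{H}_2(w))$. That single soft analytic input is precisely what substitutes for the uniform arithmetic exclusions your plan still owes. To complete your proof you must either import that argument (at which point your stratum-by-stratum analysis becomes unnecessary) or genuinely carry out the corank-1 and corank-2 estimates, which is a substantial piece of work that the proposal, by its own admission, leaves undone.
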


\begin{proof}
As before, let $Y_\Sigma(F)$ denote the quotient $P''(F)\backslash X_{\Sigma(F)}$, let $q_F:X_{\Sigma}(F)\to Y_\Sigma(F)$ be the natural projection and let $p_F^*:Y_\Sigma(F)\to\mathcal{A}_2^*$ be the gluing map. 

Take $x\in\overline{q_{F}e_{F}(\mathbb{E}_m)}$, and let $z\in q_F^{-1}(x)$. We affirm that $z\in\overline{e_F(\mathbb{H}_2(v))}$ for some $v\in\mathbb{Z}^5$ that satisfies equation \eqref{1}.

Since $P''(F)$ acts properly discontinuously in $X_{\Sigma(F)}(F)$, there exists an open neighborhood $U\subseteq X_{\Sigma(F)}(F)$ of $z$ such that $h(U)\cap U\neq\varnothing$ if and only if $h\in\mathrm{Stab}_{P''(F)}(z)$. Note that this stabilizer is finite. On the other hand, $Y_{\Sigma(F)}(F)$ is an analytic space and by \cite[Theorem II.4.7]{Dem}, $q_F(U)\cap q_Fe_F(\mathbb{E}_m)$ has a finite number of irreducible components. Therefore $x$ must be in the closure of one of these components. Now each of these components must be an irreducible component of $q_F(U)\cap q_Fe_F(\mathbb{H}_2(v))$ for some $v$, and we therefore have that $x\in\overline{q_Fe_F(\mathbb{H}_2(v))}$. 

We have an open embedding $\mathrm{Stab}(z)\backslash U\hookrightarrow Y_{\Sigma(F)}(F)$, and since $x\in\overline{q_Fe_F(\mathbb{H}_2(v))}$, we have that the image of $z$ in $\mathrm{Stab}(z)\backslash U$ is in the closure of the image of $e_F(\mathbb{H}_2(v))$ in $\mathrm{Stab}(z)\backslash U$. We therefore have that
\[z\in\overline{\bigcup_{h\in\mathrm{Stab}(z)}he_F(\mathbb{H}_2(v))}=\bigcup_{h\in\mathrm{Stab}(z)}\overline{he_F(\mathbb{H}_2(v))}\]
since $\mathrm{Stab}(z)$ is finite. Now $he_F(\mathbb{H}_2(v))=e_F(\mathbb{H}_2(w))$ for some $w$, and the proof is finished. 

\end{proof}

This lemma implies that in order to find the closure of $\mathcal{E}_m$ in $\mathcal{A}_2^*$, we only need to find the closure of $e_1(\mathbb{H}_2(v))$ for every $v$ that satisfies equation \eqref{1}.

\subsection{Corank 1 boundary component}

We will first study the closure in the corank 1 boundary component. As was said at section 2, in this case the partial quotient can be identified with the map $e_{1}:\mathbb{H}_{2}\to\C^{\times}\times\C\times\mathbb{H}$ given by 
$$\begin{pmatrix}
    \tau_{1} & \tau_{2} \\
    \tau_{2} & \tau_{3} \\
   \end{pmatrix}\longrightarrow (e^{2\pi i\tau_1},\tau_{2},\tau_{3}).$$  

\begin{lemma}
\label{tresdos}
Let $v=(a,b,c,d,e)\in\Z^{5}$ be a vector that satisfies \eqref{1}. We have that $\overline{e_{1}(\mathbb{H}_{2}(v))}$ intersects the boundary of $X_{\Sigma(F_1)}\subseteq\C\times\C\times\hs=\mathfrak{X}_{\Sigma(F_1)}$ if and only if $v=(0,\pm m,c,0,e)$ for some $c,e\in\Z$ with $\mathrm{gcd}(m,c,e)=1.$ More precisely, we have that
$$\overline{e_{1}(\mathbb{H}_{2}(v))}\cap X_{\Sigma(F_1)} = \begin{cases}
                                                                            \emptyset & \text{if $a=0$ or $d=0$} \\
                                                                            \left\{\left(0,\frac{c\tau+e}{m},\tau\right) : \tau\in\mathbb{H}\right\} & \text{if $v = (0,m,c,0,e)$} 
                                                                            \end{cases}.$$
\end{lemma}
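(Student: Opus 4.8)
The plan is to read off the closure directly from the coordinate description of $e_{1}$. Since $\mathfrak{X}_{\Sigma(F_1)}=\C\times\C\times\mathbb{H}$ and $\partial X_{\Sigma(F_1)}=\{0\}\times\C\times\mathbb{H}$, a sequence $e_{1}(\tau^{(k)})$ converges to a boundary point exactly when $\mathrm{Im}(\tau_{1}^{(k)})\to+\infty$ while $\tau_{2}^{(k)}\to w\in\C$ and $\tau_{3}^{(k)}\to\sigma$ remain bounded, with $\sigma$ in the \emph{open} upper half-plane. Thus the statement reduces to a question about equation \eqref{2}: for which $v$ does $\mathbb{H}_{2}(v)$ admit such a sequence, and what are the resulting limit points $(0,w,\sigma)$? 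The whole analysis hinges on whether the coefficient of $\tau_{1}$ in \eqref{2}, namely $a+d\tau_{3}$, can be made to vanish in the limit.

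First I would treat the case $(a,d)\neq(0,0)$ and show the closure misses the boundary. Writing \eqref{2} as $\tau_{1}(a+d\tau_{3})=d\tau_{2}^{2}-b\tau_{2}-c\tau_{3}-e$, on $\mathbb{H}_{2}(v)$ the coordinate $\tau_{1}$ becomes an explicit rational function of $(\tau_{2},\tau_{3})$ whenever $a+d\tau_{3}\neq0$. The key point is that $a+d\tau_{3}\neq0$ for \emph{every} $\tau_{3}\in\mathbb{H}$, since its vanishing would force $\tau_{3}=-a/d\in\R$. Hence as $(\tau_{2},\tau_{3})\to(w,\sigma)$ with $\sigma\in\mathbb{H}$ the right-hand side stays finite, so $\mathrm{Im}(\tau_{1})$ remains bounded; the only way to force $\mathrm{Im}(\tau_{1})\to+\infty$ is to let the denominator degenerate ($\tau_{3}\to\R$) or the numerator blow up ($\tau_{2}\to\infty$), and in either case the putative limit leaves $\{0\}\times\C\times\mathbb{H}$. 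The subcase $d=0$, $a\neq0$ is the same argument with $\tau_{1}=-(b\tau_{2}+c\tau_{3}+e)/a$. This yields the empty alternative.

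Next I would analyze $a=d=0$. Now \eqref{1} degenerates to $m^{2}=b^{2}$, forcing $b=\pm m$, and \eqref{2} loses its dependence on $\tau_{1}$, reading $b\tau_{2}+c\tau_{3}+e=0$. Consequently $\tau_{1}$ is a \emph{free} variable on $\mathbb{H}_{2}(v)$: fixing any $\tau_{3}=\tau\in\mathbb{H}$ and the forced value $\tau_{2}=-(c\tau+e)/b$, the positive-definiteness of $\mathrm{Im}(\tau)$ holds automatically once $\mathrm{Im}(\tau_{1})$ is large, so I may send $\mathrm{Im}(\tau_{1})\to+\infty$ and obtain the limit $(0,\tau_{2},\tau)$. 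Letting $\tau$ range over $\mathbb{H}$ produces exactly the claimed boundary curve $\{(0,\tau_{2},\tau):\tau\in\mathbb{H}\}$, with $\tau_{2}$ the displayed affine function of $\tau$. Finally, for a vector of the form $(0,\pm m,c,0,e)$ primitivity is literally the condition $\gcd(m,c,e)=1$; this is the normalization attaching $v$ to $\mathcal{E}_{m}$, since a non-primitive such $v$ would, after removing its content, satisfy \eqref{1} with a strictly smaller $m$ and hence describe some $\mathcal{E}_{m'}$ instead.

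I expect the one genuinely delicate step to be the case $d\neq0$: one must rule out \emph{all} boundary points of the required shape, and the essential input is the constraint $\tau_{3}\in\mathbb{H}$, which keeps $a+d\tau_{3}$ away from $0$ in any admissible limit. The remaining points are routine bookkeeping --- checking that the limiting procedure in the case $a=d=0$ stays inside $\mathbb{H}_{2}$, and matching primitivity with the gcd condition --- though I would also verify the sign in the parametrization $\tau_{2}=(c\tau+e)/m$ against the chosen orientation of \eqref{2}, since a direct substitution appears to give the relation up to an overall sign.
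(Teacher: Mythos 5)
Your proposal is correct and follows essentially the same route as the paper: both rewrite equation \eqref{2} so that $\tau_{1}$ is multiplied by the coefficient $a+d\tau_{3}$ (the paper's $\widehat{\omega}_{n}=(a-d\tau_{n})\omega_{n}$, up to a sign convention), observe that this coefficient cannot vanish for $\tau_{3}$ in the open upper half-plane unless $a=d=0$, and then read off $b=\pm m$ and the affine relation $\tau_{2}=\mp(c\tau_{3}+e)/m$ in the degenerate case. Your write-up is in fact slightly more complete, since you also verify that every point of the limiting curve is actually attained (checking positive-definiteness for large $\mathrm{Im}(\tau_{1})$), a converse inclusion the paper leaves implicit, and you correctly flag the same sign/primitivity bookkeeping the paper uses.
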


\begin{proof}
We just have to study which points of the form $(0,z,\tau)\in\C\times\C\times\mathbb{H}$ are limits of sequences in $e_{1}(\mathbb{H}_{2}(v)).$ If $(0,z,\tau)$ is such a limit then there exists a sequence
\[\left\{\left(\begin{array}{cc}\omega_n&z_n\\z_n&\tau_n\end{array}\right)\right\}_{n\in\mathbb{N}}\subseteq\mathbb{H}_{2}\]
such that for all $n\in\mathbb{N}$,
\begin{equation}
\label{vn}
a\omega_{n}+bz_{n}+c\tau_{n}+d(z_{n}^{2}-\omega_{n}\tau_{n})+e = 0,
\end{equation}
and when $n\to\infty$,
\begin{itemize}
    \item $z_{n}\to z\in\C,$
    \item $\tau_{n}\to\tau\in\mathbb{H}$
    \item $\Im\omega_{n}\to\infty$.
\end{itemize}

The above conditions imply that the sequence $\{\widehat{\omega}_n\}_{n\in\N}$ given by 
$$\widehat{\omega}_{n} := (a -d\tau_{n})\omega_{n}$$
must have a finite limit, but it is easy to see that this is impossible if $a,d\neq 0.$ Now, condition \eqref{1} implies that necessarily $v = (0,\pm m, c,0,e)$, and since this vector is primitive, we have that $\mathrm{gcd}(m,c,e)=1$. If we replace \eqref{vn} with $a=d=0$ we get that $z_{n} = \mp\frac{c\tau_{n}+e}{m},$ so the limit is $\left(0,\mp\frac{c\tau+e}{m},\tau\right).$

\end{proof}

\begin{definicion}
\label{defKcerounom}
Let $K^{0}(1)[m]$ denote the subvariety of $K^{0}(1)$ (see Definition \ref{Kcerouno} given by the image in $K^{0}(1)$ of the set 
$$\left\{\left(\frac{c\tau+e}{m},\tau\right)\in\C\times\hs : \mathrm{g.c.d}(m,c,e) = 1\right\}.$$
\end{definicion}

Note that if $\mathrm{Stab}_{\SL(2,\Z)}(\tau) = \{\pm\mathrm{id}\}$ then the fiber of $K^{0}(1)[m]$ over $\tau$ is just the set of points of order $m$ in the elliptic curve $E_{\tau},$ mod $\pm 1 .$

The previous lemma therefore implies the following:

\begin{prop}
We have that
$$\mathcal{E}_{m}^{\ast}\cap\partial_{F_1}\mathcal{A}_{2}^{\ast} = K^{0}(1)[m].$$
\end{prop}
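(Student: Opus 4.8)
The plan is to combine the previous lemma with the definition of $\partial_{F_1}\mathcal{A}_2^*$ and the description of the quotient map to $K^0(1)$. Recall that $\partial_{F_1}\mathcal{A}_2^* = p_{F_1}(P''(F_1)\backslash\partial X_{\Sigma(F_1)})$, and that the quotient of $\{0\}\times\C\times\hs = \partial X_{\Sigma(F_1)}$ by the $P''(F_1)$-action is exactly the surface $K^0(1)$ of Definition \ref{Kcerouno}. By the lemma preceding the corank-1 analysis, we have the reduction
\[
\overline{e_1(\mathbb{E}_m)} = \bigcup_v \overline{e_1(\mathbb{H}_2(v))},
\]
so computing $\mathcal{E}_m^* \cap \partial_{F_1}\mathcal{A}_2^*$ amounts to taking the union over all admissible $v$ of the boundary intersections $\overline{e_1(\mathbb{H}_2(v))}\cap \partial X_{\Sigma(F_1)}$ and then pushing forward along the quotient to $K^0(1)$.

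First I would invoke Lemma \ref{tresdos}: its content is that the only vectors $v$ contributing a nonempty boundary intersection are those of the form $v=(0,\pm m, c, 0, e)$ with $\gcd(m,c,e)=1$, and that the intersection is precisely the set $\{(0, \mp\frac{c\tau+e}{m}, \tau) : \tau\in\hs\}$. Taking the union over all such $c,e$, the $z$-coordinates that arise in the boundary are exactly the values $\pm\frac{c\tau+e}{m}$ with $\gcd(m,c,e)=1$ (the two signs being interchanged by $\varepsilon\in\{\pm1\}$). Under the identification of the $P''(F_1)$-quotient of $\{0\}\times\C\times\hs$ with $K^0(1)$, this is precisely the image of
\[
\left\{\left(\tfrac{c\tau+e}{m},\tau\right)\in\C\times\hs : \gcd(m,c,e)=1\right\},
\]
which is by definition $K^0(1)[m]$ (Definition \ref{defKcerounom}). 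Hence the image of $\bigcup_v \overline{e_1(\mathbb{H}_2(v))}\cap\partial X_{\Sigma(F_1)}$ under the quotient map is exactly $K^0(1)[m]$, giving the claimed equality.

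The main point requiring care — and the step I expect to be the chief obstacle — is verifying that the set-theoretic passage to the quotient is clean: namely that $\mathcal{E}_m^*\cap\partial_{F_1}\mathcal{A}_2^*$ equals the image of the boundary locus computed in the lemma, with no spurious extra points and no collapsing that shrinks the image. One direction follows since $p_{F_1}$ is continuous and $K^0(1)[m]$ is manifestly the image of the boundary sets from Lemma \ref{tresdos}. For the reverse inclusion, I would appeal to the reduction lemma at the start of Section \ref{compactification}, which already accounts for the $P''(F_1)$-orbit structure by showing any boundary point of the quotient lifts into some $\overline{e_1(\mathbb{H}_2(w))}$; combined with the $\varepsilon,m,n$, and $\mathrm{SL}(2,\Z)$ identifications built into the definition of $K^0(1)$, the sign ambiguity $\pm$ and the choice of representative $(c,e)$ modulo the group action are absorbed, so the image is well-defined and equal to $K^0(1)[m]$. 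The only genuine subtlety is confirming that the primitivity condition $\gcd(m,c,e)=1$ matches the primitivity of $v$ demanded by the equations \eqref{1}–\eqref{2}, which I would check directly from the form $v=(0,\pm m, c, 0, e)$.
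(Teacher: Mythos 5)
Your proposal is correct and takes essentially the same route as the paper: the paper states this proposition with no separate argument (``The previous lemma therefore implies the following''), i.e.\ it is obtained exactly as you do, by combining the reduction lemma $\overline{e_1(\mathbb{E}_m)}=\bigcup_v\overline{e_1(\mathbb{H}_2(v))}$ with Lemma \ref{tresdos} and the identification of the $P''(F_1)$-quotient of $\{0\}\times\C\times\hs$ with $K^{0}(1)$. Your additional care about the quotient passage, the sign $\pm$ being absorbed by $\varepsilon$, and the primitivity condition $\gcd(m,c,e)=1$ just makes explicit what the paper leaves implicit.
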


Surprisingly, this boundary component is actually irreducible: 

\begin{prop}
For all $m\in\mathbb{Z}$, $K^0(1)[m]$ is irreducible and if $m,n\in\mathbb{N}$ are different, then $K^0(1)[m]\cap K^0(1)[n]=\varnothing$.
\end{prop}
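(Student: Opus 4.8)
The plan is to reinterpret $K^0(1)[m]$ as the image under the quotient map $\pi:\C\times\hs\to K^0(1)$ of a $\Gamma$-invariant set, where $\Gamma$ denotes the group of transformations generating the equivalence relation in Definition \ref{Kcerounom}'s ambient surface (Definition \ref{Kcerouno}). Writing $C_{c,e}:=\left\{\left((c\tau+e)/m,\tau\right):\tau\in\hs\right\}$ for the graph of $\tau\mapsto(c\tau+e)/m$, we have $K^0(1)[m]=\pi(S_m)$ with $S_m:=\bigcup_{\gcd(m,c,e)=1}C_{c,e}$. Each $C_{c,e}$ is the image of $\hs$ under a holomorphic embedding, hence irreducible, and the point $(c\tau+e)/m$ has exact order $m$ in $E_\tau$ precisely because $\gcd(m,c,e)=1$; thus $S_m$ is exactly the locus of exact-order-$m$ points (taken mod $\pm 1$) in the universal curve over $\hs$.

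First I would record how $\Gamma$ permutes the curves $C_{c,e}$. A direct computation shows that $\gamma=\left(\begin{smallmatrix}a&b\\c'&d'\end{smallmatrix}\right)\in\SL(2,\Z)$ sends $C_{c,e}$ to $C_{c'',e''}$ with $(c'',e'')=(c,e)\gamma^{-1}$; the translations $z\mapsto z+k\tau+l$ send the label $(c,e)$ to $(c+mk,e+ml)$; and $z\mapsto -z$ sends it to $(-c,-e)$. Reducing modulo $m$, the $\SL(2,\Z)$-part therefore acts on $(c,e)\bmod m\in(\Z/m)^2$ through the reduction $\SL(2,\Z)\twoheadrightarrow\SL(2,\Z/m)$, and $\gcd(m,c,e)=1$ says precisely that $(c,e)\bmod m$ is a primitive vector. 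Since $\SL(2,\Z/m)$ acts transitively on primitive vectors of $(\Z/m)^2$ (a standard fact, reducible to prime powers by CRT), every admissible label lies in the orbit of $(0,1)$, and the translations supply all integer lifts. Hence $\Gamma\cdot C_{0,1}=S_m$.

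With this in hand irreducibility is immediate: because $\pi$ is constant on $\Gamma$-orbits, $K^0(1)[m]=\pi(S_m)=\pi(\Gamma\cdot C_{0,1})=\pi(C_{0,1})$, and since $C_{0,1}\cong\hs$ is irreducible, $K^0(1)[m]$ is the image of an irreducible variety under a morphism, hence irreducible. For the disjointness the key point is that $\Gamma$ preserves the exact order of a torsion point: each $\gamma\in\SL(2,\Z)$ induces a group isomorphism $E_\tau\to E_{\tau'}$, while the lattice translations and $z\mapsto -z$ fix or negate the point on $E_\tau$. Consequently $S_m$ is $\Gamma$-invariant and $\pi^{-1}(K^0(1)[m])=S_m$. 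As a single point cannot have exact order both $m$ and $n$ when $m\neq n$, we have $S_m\cap S_n=\varnothing$; the $\pi$-preimage of any common point of $K^0(1)[m]$ and $K^0(1)[n]$ would lie in $S_m\cap S_n$, a contradiction.

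The one genuinely non-formal ingredient is the orbit computation of the second paragraph, combining the explicit $\Gamma$-action on the labels $(c,e)$ with the transitivity of $\SL(2,\Z/m)$ on primitive vectors; everything else is the formal bookkeeping of images and preimages under a quotient map. I would therefore carry out the label computation carefully first and isolate the transitivity statement as a small lemma, after which both assertions follow cleanly.
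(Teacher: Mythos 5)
Your proposal is correct and follows essentially the same route as the paper's proof: both compute the explicit action of the corank-1 boundary group on the labels $(c,e)$, reduce transitivity to the standard facts that $\SL(2,\Z/m\Z)$ acts transitively on primitive vectors and that $\SL(2,\Z)\to\SL(2,\Z/m\Z)$ is surjective, and deduce disjointness from the invariance of the exact order of a torsion point. Your formulation via the $\Gamma$-invariant set $S_m$ and the saturation $\pi^{-1}(K^0(1)[m])=S_m$ is just a cleaner bookkeeping of what the paper does by showing all the curves $X_{(c,e)}$ have the same image.
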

\begin{proof}
Define $X_{(c,e)}$ to be the image of 
\[\left\{\left(\frac{c\tau+e}{m},\tau\right)\: \tau\in\hs\right\}\]
in $\mathcal{A}_2^*$. We will show that $X_{(x,y)}=X_{(z,w)}$
if $\mathrm{g.c.d.}(x,y,m)=\mathrm{g.c.d.}(z,w,m)=1$.

First of all, we see that in order to get the corank 1 border component, we take the quotient of $\mathbb{C}\times\hs$ by the group
\[G:=\left\{\left(\begin{array}{ccc}\epsilon&m&n\\0&a&b\\0&c&d\end{array}\right):\begin{array}{c}\left(\begin{array}{cc}a&b\\c&d\end{array}\right)\in\mathrm{SL}(2,\mathbb{Z})\\m,n\in\mathbb{Z},\epsilon=\pm1\end{array}\right\}\]

Now the subgroup consisting of matrices with $\epsilon=1$ and $m,n=0$ acts as
\[\left(\begin{array}{ccc}1&0&0\\0&a&b\\0&c&d\end{array}\right):(z,\tau)\mapsto(z(c\tau+d)^{-1},(a\tau+b)(c\tau+d)^{-1}).\]
A brief calculation shows that the rational representation of the morphism
\[\mathbb{C}/\langle1,\tau\rangle\to\mathbb{C}/\langle1,(a\tau+b)(c\tau+d)^{-1}\rangle\]
\[z\mapsto z(c\tau+d)^{-1}\]
is the matrix 
\[\left(\begin{array}{cc}a&-b\\-c&d\end{array}\right),\] 
and so this same matrix modulo $m$ gives the representation of the action of the morphism on the torsion points of both tori.

Now if $(\overline{x},\overline{y}),(\overline{z},\overline{w})\in(\mathbb{Z}/m\mathbb{Z})^2$ are points of order $m$, it is an easy exercise to see that there exists an element $M\in\mathrm{SL}(2,m)$ such that $M(\overline{x},\overline{y})=(\overline{z},\overline{w})$. On the other hand, the reduction map $\pi:\mathrm{SL}(2,\mathbb{Z})\to\mathrm{SL}(2,m)$ is surjective (which can be seen by observing that both groups are generated by elementary matrices, and each elementary matrix modulo $m$ is in the image of the reduction map), and so there exists
\[N=\left(\begin{array}{cc}r&s\\u&v\end{array}\right)\in\mathrm{SL}(2,\mathbb{Z})\]
such that $\pi(N)=M$. Now we get that for any $\tau$,
\[\left(\begin{array}{ccc}1&0&0\\0&r&-s\\0&-u&v\end{array}\right)\cdot\left(\frac{x+\tau y}{m},\tau\right)\equiv \left(\frac{z+\tau' w}{m},\tau'\right)\text{ (mod }G) \]
where $\tau'=(r\tau-s)(u\tau-v)^{-1}$. In particular, $X_{(x,y)}= X_{(z,w)}$, and so we are done. 

As for the statement that $K^0(1)[m]\cap K^0(1)[n]=\varnothing$ if $m\neq n$, this is clear since a point of order $m$ on a torus cannot be of order $n$. 
\end{proof}

\subsection{Corank 2 boundary component}

Now we study the closure in the corank 2 boundary component. In this case the partial quotient can be identified with the map $e_{2}:\mathbb{H}_{2}\to(\C^{\times})^{3}$ given by 
$$\begin{pmatrix}
    \tau_{1} & \tau_{2} \\
    \tau_{2} & \tau_{3} \\
   \end{pmatrix}\mapsto (e^{2\pi i\tau_1},e^{2\pi i\tau_{2}},e^{2\pi i\tau_{3}}).$$
and we have a holomorphic map $\psi:\mathbb{H}_{2}\to\C^{3}$ given by $\iota\circ e_{2},$ where $\iota:(\C^{\times})^{3}\to\C^{3}$ is the map given by $\iota(t_{1},t_{2},t_{3}) = (t_{1}t_{2}^{-1},t_{2},t_{3}t_{2}^{-1})$ which allows us to see $X(F_2):=P^{\prime}(F_2)\setminus\hs_{2}$ as a subset of the toroidal embedding $(\C^{\times})^{3}\hookrightarrow T_{\Sigma(F_2)}.$ According to \cite[Part I, Lemma 3.137 and Proposition 3.144]{Hulek} we have that $\mP_{\infty}^{1}$ is certain quotient of the axis $\{0\}\times\C\times\{0\}.$

\begin{lemma} 
\label{trescuatro}
The axis $\{0\}\times\C\times\{0\}$ lies in the closure of $\psi(\mathbb{E}_{m})\subseteq\C^3.$
\end{lemma}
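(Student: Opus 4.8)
The plan is to show that an arbitrary point $(0,w,0)$ of the axis $\{0\}\times\C\times\{0\}$ is a limit of points $\psi(\tau_n)$ with $\tau_n\in\mathbb{E}_m=\bigcup_v\mathbb{H}_2(v)$. Recall that $\psi=\iota\circ e_2$ sends $\left(\begin{smallmatrix}\tau_1&\tau_2\\\tau_2&\tau_3\end{smallmatrix}\right)$ to $\left(e^{2\pi i(\tau_1-\tau_2)},e^{2\pi i\tau_2},e^{2\pi i(\tau_3-\tau_2)}\right)$. Thus to land on the axis I need the first and third coordinates to tend to $0$ while the middle coordinate $e^{2\pi i\tau_2}$ tends to a prescribed nonzero value $w$. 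The first and third coordinates go to $0$ precisely when $\Im(\tau_1-\tau_2)\to\infty$ and $\Im(\tau_3-\tau_2)\to\infty$, and the middle coordinate is controlled by choosing $\tau_2$ so that $e^{2\pi i\tau_2}=w$ (for instance $\tau_2$ fixed with the right imaginary part, perturbed slightly to realize any $w\in\C^\times$, and then taking $w\to 0$ or $w\to\infty$ to reach the two torus-fixed points of the $\mP^1$).

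First I would fix a convenient defining vector $v$ satisfying equation \eqref{1}, so that $\mathbb{H}_2(v)\subseteq\mathbb{E}_m$, and parametrize $\mathbb{H}_2(v)$ explicitly via equation \eqref{2}. The natural choice is a vector whose equation \eqref{2} can be solved for $\tau_1$ (equivalently $\tau_{11}$) as a function of $\tau_2,\tau_3$; for example a vector with $a\neq 0$ and $d=0$ lets me write $\tau_1$ affinely in terms of $\tau_2,\tau_3$, whereas I actually want $\tau_1$ free to run off to $i\infty$, so I would instead pick $v$ with $d\neq0$ so that equation \eqref{2} becomes $d(\tau_2^2-\tau_1\tau_3)+\dots=0$, which I can solve for $\tau_1=\frac{b\tau_2+c\tau_3+d\tau_2^2+e}{d\tau_3}$ (using $a=0$). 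Then $\tau_1-\tau_2$ and $\tau_3-\tau_2$ can both be driven to have imaginary part tending to $+\infty$ by sending $\tau_3\to i\infty$ along a suitable ray while keeping $\tau_2$ in a bounded region; one checks that $\tau_1$ produced by the constraint then also has large imaginary part relative to $\tau_2$. The middle coordinate $e^{2\pi i\tau_2}$ is then free to approximate any target in $\C^\times$, and letting it degenerate covers the two remaining special points of the axis.

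The key steps, in order, are: (i) select the defining vector $v$ and solve equation \eqref{2} to get an explicit one-parameter (really two-parameter) family inside $\mathbb{H}_2(v)$; (ii) verify this family stays in $\hs_2$, i.e.\ the imaginary part of the matrix remains positive definite along the chosen degeneration — this is the positivity bookkeeping that must not be skipped; (iii) compute the limits of the three coordinates of $\psi$ and confirm they converge to $(0,w,0)$ for the prescribed $w$; and (iv) argue that as $w$ ranges over $\C^\times$ and its degenerations, the whole axis $\{0\}\times\C\times\{0\}$ is swept out and lies in $\overline{\psi(\mathbb{E}_m)}$.

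The main obstacle I expect is step (ii): I must choose the path $\tau_3\to i\infty$ (and the accompanying behavior of $\tau_2$) so that the value of $\tau_1$ \emph{forced} by equation \eqref{2} keeps the full $2\times 2$ imaginary part positive definite, i.e.\ so that $\Im\tau_1\,\Im\tau_3-(\Im\tau_2)^2>0$ holds throughout while simultaneously $\Im(\tau_1-\tau_2)\to\infty$ and $\Im(\tau_3-\tau_2)\to\infty$. Balancing the constraint from equation \eqref{2} against the Siegel positivity condition is the delicate part; the remaining computations are routine limit evaluations of the exponential map.
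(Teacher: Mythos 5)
Your overall strategy---exhibiting explicit matrices in some $\mathbb{H}_2(v)$ whose images under $\psi$ converge to an arbitrary axis point $(0,w,0)$---is the same as the paper's, but your choice of defining vector is exactly the wrong one, and the step you defer with ``one checks that $\tau_1$ produced by the constraint then also has large imaginary part'' is false. With $a=0$ and $d\neq 0$, equation \eqref{2} reads $b\tau_2+c\tau_3+d(\tau_1\tau_3-\tau_2^2)+e=0$, i.e.\ $\tau_1=\frac{d\tau_2^2-b\tau_2-c\tau_3-e}{d\tau_3}$ (your signs differ, which is immaterial). To reach $(0,w,0)$ with $w\neq 0$ you need $e^{2\pi i\tau_2}\to w$, hence $\Im\tau_2$ bounded, and simultaneously $\Im\tau_1\to\infty$, $\Im\tau_3\to\infty$. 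But with $\tau_2$ kept in a bounded region (as your plan stipulates) and $\Im\tau_3\to\infty$, the displayed constraint forces $\tau_1\to -c/d$, a \emph{real} constant: the quadratic term $d\tau_1\tau_3$ can only grow linearly in $\tau_3$, so it caps $\tau_1$. Consequently $\Im\tau_1\to 0$, the first coordinate $e^{2\pi i(\tau_1-\tau_2)}$ stays bounded away from $0$, and no axis point with $w\neq 0$ is ever reached. (Letting $\Re\tau_2$ run off to infinity does not rescue this: writing out $\Im\tau_1$ exactly and maximizing over $\Re\tau_2$ gives $\limsup\Im\tau_1\leq 0$ whenever $\Im\tau_3\to\infty$ and $\Im\tau_2$ is bounded.) So the real obstacle is not the positive-definiteness bookkeeping you flagged as delicate; it is that for $d\neq 0$ the closure of $\psi(\mathbb{H}_2(v))$ simply misses the punctured axis, and your step (iii) can never be completed.

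The misconception lies in your reason for discarding the case $d=0$, $a\neq 0$: you say an affine relation leaves $\tau_1$ not ``free to run off to $i\infty$,'' but freedom is not needed---you only need the affine relation to \emph{push} $\tau_1$ to $i\infty$ together with $\tau_3$, which happens precisely when $-c/a>0$. This is what the paper does: it takes $v=(1,-(m-2),-(m-1),0,0)$, which satisfies \eqref{1}, and the matrices $T_{z,\tau}=\begin{pmatrix}\tau & z\\ z & \frac{1}{m-1}\left(\tau-(m-2)z\right)\end{pmatrix}$ with $z\in\C$ fixed and $\Im\tau\to\infty$. All the checks you list then become trivial: both diagonal imaginary parts blow up while $\Im z$ stays fixed, so positivity holds for $\Im\tau\gg 0$; the limit is $\psi(T_{z,\tau})\to(0,e^{2\pi iz},0)$; and letting $z$ range over $\C$ sweeps out $\{0\}\times\C^{\times}\times\{0\}$, whose closure contains the whole axis. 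As written, your argument cannot be repaired within the family of vectors you chose; it requires switching to a vector with $d=0$ and $ac<0$, so this is a genuine gap rather than a presentational one.
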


\begin{proof}
Consider the period matrices defined by 
$$T_{z,\tau} = \begin{pmatrix}
       \tau & z \\
        z & \frac{1}{m-1}\left(\tau-(m-2)z\right) \\
    \end{pmatrix},$$
where $z$ is a fixed complex number and $\tau$ is such that $T_{z,r}\in\mathbb{H}_2$. We have that the above matrix is an element of $\mathbb{H}_{2}(v)$ for $v=(1,-(m-2),-(m-1),0,0)$ and when $\Im (\tau)\to\infty$ we obtain that 
\[\psi(T_{z,\tau})\to (0,e^{2\pi i z},0).\] 
Since $z\in\C$ is arbitrary we get that $\{0\}\times\C\times\{0\}$ is contained in $\overline{\psi(\mathbb{E}_{m})}$ as we wanted to see. 

\end{proof}

\begin{corollary}
We have that 
$$\mathcal{E}_{m}^{\ast}\cap\partial_{F_2}\mathcal{A}_{2}^{\ast} = \mP_{\infty}^{1},$$
where $\mP_{\infty}^{1}$ is the \emph{peripheral $\mP^{1}$} introduced in Theorem \ref{Adosestrella}
\end{corollary}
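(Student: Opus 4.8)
The plan is to prove the two inclusions separately, one of which is immediate and the other of which is essentially a repackaging of Lemma \ref{trescuatro} through the quotient and gluing maps. For the inclusion $\mathcal{E}_m^*\cap\partial_{F_2}\mathcal{A}_2^*\subseteq\mP_\infty^1$, I would simply invoke Theorem \ref{Adosestrella}: the corank $2$ contribution to the boundary is by definition the peripheral $\mP^1$, so any point of $\mathcal{E}_m^*$ that happens to lie in $\partial_{F_2}\mathcal{A}_2^*$ automatically lies on $\mP_\infty^1$. This direction requires no work.

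For the reverse inclusion the key input is Lemma \ref{trescuatro}, which places the entire axis $\{0\}\times\C\times\{0\}$ inside the closure $\overline{\psi(\mathbb{E}_m)}$ taken in $\C^3$. Since $\psi=\iota\circ e_2$, this closure coincides, via the chart map $\iota$, with the closure of $e_2(\mathbb{E}_m)$ in the relevant chart of the partial compactification $X_{\Sigma(F_2)}$ (the chart $T_{-1}$ in the notation of Theorem \ref{Adosestrella}); hence the axis lies in $\overline{e_2(\mathbb{E}_m)}\cap\partial X_{\Sigma(F_2)}$. I would then push this forward along the continuous maps $q_{F_2}:X_{\Sigma(F_2)}\to Y_{\Sigma(F_2)}$ and $p_{F_2}^*:Y_{\Sigma(F_2)}\to\mathcal{A}_2^*$. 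Because the composite $p_{F_2}^*\circ q_{F_2}\circ e_2$ restricts on the interior to the projection $p:\mathbb{H}_2\to\mathcal{A}_2$, it carries $e_2(\mathbb{E}_m)$ onto $\mathcal{E}_m$; being continuous, it therefore carries the closure $\overline{e_2(\mathbb{E}_m)}$ into $\overline{\mathcal{E}_m}=\mathcal{E}_m^*$. In particular the image of the whole axis lands in $\mathcal{E}_m^*$.

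To conclude, I would use the description in Theorem \ref{Adosestrella} that $\mP_\infty^1$ is the image of the axis $\{0\}\times\C\times\{0\}$ under $p_{F_2}^*\circ q_{F_2}$, so that the image of the axis is (at least) a dense subset of $\mP_\infty^1$. Since $\mathcal{E}_m^*$ is closed in $\mathcal{A}_2^*$ and $\mP_\infty^1$ is the closure of this image, it follows that $\mP_\infty^1\subseteq\mathcal{E}_m^*$; combined with $\mP_\infty^1=\partial_{F_2}\mathcal{A}_2^*$ this gives $\mP_\infty^1\subseteq\mathcal{E}_m^*\cap\partial_{F_2}\mathcal{A}_2^*$. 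Together with the first inclusion this yields the claimed equality.

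The step I expect to demand the most care is the continuity and closure bookkeeping: verifying that the image of the closure $\overline{e_2(\mathbb{E}_m)}$ under the continuous composite $p_{F_2}^*\circ q_{F_2}$ really lands inside $\overline{\mathcal{E}_m}=\mathcal{E}_m^*$, and that the image of the axis is dense in the peripheral $\mP^1$. Both facts follow once one unwinds the identifications recorded in Theorem \ref{Adosestrella} and the cited results of \cite{Hulek}. Crucially, phrasing the final step through \emph{closedness} of $\mathcal{E}_m^*$ rather than through literal surjectivity of the quotient onto $\mP^1$ sidesteps any delicate question about whether the affine axis maps onto the entire projective line or only onto a dense open part, so once the identifications are in place the argument closes with no further computation.
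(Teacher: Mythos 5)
Your proposal is correct and follows essentially the same route as the paper, which states this corollary without further proof as an immediate consequence of Lemma \ref{trescuatro}: the trivial inclusion comes from $\partial_{F_2}\mathcal{A}_2^*=\mP_\infty^1$ (Theorem \ref{Adosestrella}), and the reverse inclusion comes from pushing the axis $\{0\}\times\C\times\{0\}\subseteq\overline{\psi(\mathbb{E}_m)}$ forward through $q_{F_2}$ and $p_{F_2}^*$. Your explicit continuity-and-closedness bookkeeping (in particular using closedness of $\mathcal{E}_m^*$ to avoid checking that the affine axis surjects onto $\mP_\infty^1$) is exactly the unwinding the paper leaves implicit.
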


By \cite[Remark 3.156]{Hulek}, the closure of the corank 1 boundary component of $\mathcal{A}_2$ in $\mathcal{A}_2^*$ is isomorphic to the Kummer modular surface $\mathcal{K}_1\to\mathbb{P}^1$, and therefore $\mathbb{P}^1_\infty$ is the fiber over $\infty$ of this family.

We now need to see how the boundary components intersect each other.

\begin{definicion}
For $m\in\mathbb{Z}$, let $C_m$ be the closure of $K^0(1)[m]$ in $\mathcal{A}_2^*$.
\end{definicion}

\begin{prop}
The curve $C_m$ intersects $\mathbb{P}_\infty^1$ at $\varphi(m)+1$ points where $\varphi$ is the Euler totient function.
\end{prop}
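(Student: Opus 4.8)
The plan is to reduce the statement to an explicit limit computation in the corank $2$ chart, exactly in the spirit of Lemma \ref{trescuatro}. By Lemma \ref{tresdos} the curve $K^{0}(1)[m]$ is assembled from the boundary loci attached to the vectors $v=(0,m,c,0,e)$ with $\gcd(m,c,e)=1$, along which the off-diagonal entry satisfies $\tau_{12}=\frac{c\tau_{3}+e}{m}$. Since $C_{m}=\overline{K^{0}(1)[m]}$ already lies in the closure of $\partial_{F_{1}}\mathcal{A}_{2}^{\ast}$, its intersection with $\mP_{\infty}^{1}$ is computed by pushing these families into the corank $2$ chart via $\psi$ and letting the relevant imaginary parts tend to infinity, so that the limit lands on the axis $\{0\}\times\C\times\{0\}$ whose coordinate is $s=e^{2\pi i\tau_{12}}$.

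Carrying out this computation, I would fix a primitive representative $(c,e)$ with $0\le c,e<m$ and let $\Im(\tau_{3})\to\infty$ (choosing $\tau_{1}$ so that the first and third coordinates of $\psi$ also tend to $0$, exactly as in Lemma \ref{trescuatro}). Then $s=e^{2\pi i(c\tau_{3}+e)/m}=e^{2\pi i c\tau_{3}/m}\zeta_{m}^{e}$, and two cases appear: if $c\equiv0\pmod m$ the limit is the primitive root of unity $\zeta_{m}^{e}$ with $\gcd(e,m)=1$, producing $\varphi(m)$ values on the axis; if $c\not\equiv0$ then $e^{2\pi i c\tau_{3}/m}\to0$ and the whole family collapses to the single distinguished point $s=0$. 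Thus at the level of the axis the candidate limit points are the $\varphi(m)$ primitive $m$-th roots of unity together with $s=0$.

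It then remains to transport this count through the quotient that defines $\mP_{\infty}^{1}$ from the axis (\cite[Part I, Lemma 3.137 and Proposition 3.144]{Hulek}). I would check that under the residual action of $P^{\prime\prime}(F_{2})$ on the coordinate $s$ the $\varphi(m)$ primitive roots remain pairwise distinct and that the degenerate ($c\not\equiv0$) branches all map to one and the same point, so that the fibre $C_{m}\cap\mP_{\infty}^{1}$ consists of exactly $\varphi(m)+1$ points. As a sanity check one may compare with small cases: for $m=2$ one finds the two points $s=-1$ and $s=0$, and for $m=3$ the three points $s=\zeta_{3},\zeta_{3}^{2},0$, matching $\varphi(m)+1$.

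The delicate point — and the step I expect to be the real obstacle — is precisely this last identification. A priori the Kummer involution $z\mapsto-z$ of the corank $1$ fibre acts on the axis coordinate by $s\mapsto s^{-1}$, which would pair $\zeta_{m}^{e}$ with $\zeta_{m}^{-e}$ and halve the count to $\varphi(m)/2+1$. The content of the proof is therefore to show, using the explicit structure of the Legendre fan and of $P^{\prime\prime}(F_{2})$ acting on $\{0\}\times\C\times\{0\}\subseteq T_{-1}$, that the element inducing this inversion does not stabilize $\mP_{\infty}^{1}$ (it is absorbed into the gluing of the partial compactifications rather than into the quotient producing this particular $\mP^{1}$), so that the primitive roots survive as $\varphi(m)$ genuinely distinct points. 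Keeping careful track of which toric chart $T_{n}$ each family enters as $\tau_{3}\to\infty$ — the chart being governed by the slope $c/m$ — is what makes this bookkeeping the crux of the argument.
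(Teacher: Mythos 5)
Your first two steps are fine, and they run parallel to the paper rather than along it: the paper computes the limits inside the closure of the corank 1 stratum (the Kummer modular surface), pushing the curves $\{(t^b\rho^a,t^m)\}$ into the charts $(u,v)\mapsto(uv^{-n},u^{-1}v^{n+1})$ and then quotienting, whereas you compute directly on the axis of the corank 2 chart, in the spirit of Lemma \ref{trescuatro}. Both computations produce the same candidate set, namely the primitive $m$-th roots of unity together with the torus-fixed point(s). You have also correctly isolated the one step on which everything depends: whether the inversion $s\mapsto s^{-1}$ is among the identifications that produce $\mP_{\infty}^{1}$ from the axis.

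That step, however, is a genuine gap, and the claim you propose to prove there is false. The class of $\mathrm{diag}(1,-1)$ in $P''(F_2)\cong\mathrm{GL}(2,\Z)$ sends $\tau_{12}\mapsto-\tau_{12}$ while fixing $\tau_{11}$ and $\tau_{22}$; hence it fixes pointwise the two-dimensional cone spanned by $\mathrm{diag}(1,0)$ and $\mathrm{diag}(0,1)$ (a face of the central cone of the Legendre decomposition), it stabilizes the corresponding one-dimensional torus orbit --- the axis --- and it acts on the axis coordinate exactly by $s\mapsto s^{-1}$. Far from being ``absorbed into the gluing'', this folding is what makes the quotient of the axis a $\mP^{1}$ at all: without it the quotient would be a copy of $\C$, or a nodal curve after the two torus-fixed points are glued, contradicting Theorem \ref{Adosestrella}. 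The same identification is visible upstairs: the $\varepsilon=-1$ transformations in Definition \ref{Kcerouno} carry the branch $\tau_{12}=e/m$ to the branch $\tau_{12}=-e/m$, so these are literally the same curve inside $K^{0}(1)$ and cannot contribute distinct limit points. A concrete check: for $m=3$, $K^{0}(1)[3]$ is the bijective image of $\Gamma_0(3)\backslash\hs$, whose compactification has exactly two cusps, so $C_3$ can meet $\mP_{\infty}^{1}$ in at most two points, not $\varphi(3)+1=3$. Carried out correctly, your computation therefore yields $\varphi(m)/2+1$ points for $m\geq 3$ (the pairs $\zeta_m^{\pm e}$ collapse), and $2$ points for $m=2$. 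Be aware that the paper's own proof does not escape this objection either: it quotients only by the swap $(x,y)\mapsto(y,x)$, which corresponds to $z\mapsto\tau-z$, and never by $z\mapsto-z$; so the obstacle you flagged is real, but it resolves against the stated count rather than in favor of it.
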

\begin{proof}
For $a,b\in\{0,1,\ldots,m-1\}$, the image of $\{(\frac{1}{m}(a+b\tau),\tau):\tau\in\mathbb{H}\}$ under the partial quotient map $e:\C\times\mathbb{H}\to\C^\times\times\C^\times$ is
$$\{(e^{2\pi i(a+b\tau)/m},e^{2\pi i\tau}):\tau\in \mathbb{H}\}=\{(t^b\rho^a,t^m):0<|t|<1\},$$
where $t=e^{2\pi i\tau/m}$ and $\rho=e^{2\pi i/m}$ is a primitive $m$th root of unity. The image of this curve in the torus embedding $\C^\times\times\C^\times\hookrightarrow\C\times\C$ where $(u,v)\mapsto(uv^{-n},u^{-1}v^{n+1})$ is the curve
$$\{(t^{b-nm}\rho^a,t^{(n+1)m-b}\rho^{-a}):0<|t|<1\}.$$ 
We see that the closure of this curve in $\C\times\C$ contains boundary points if and only if 
$$-1+\frac{b}{m}\leq n\leq \frac{b}{m}.$$
Since $b<m$, we obtain that $n\in\{-1,0\}$. Under identification by the action of $\mbox{Sp}(4,\Z)$, it is easy to see that we only need to consider the case $n=0$, and so we have the curve
$$\{(t^b\rho^a,t^{m-b}\rho^{-a}):0<|t|<1\}.$$
We need to analyze the behavior of the image of this curve in the quotient $\C^2/((x,y)\sim(y,x))$.

If $b=0$, then the boundary point that we add is the image of $(\rho^a,0)$ in the quotient. We see that the involution $(x,y)\mapsto(y,x)$ does not fix these points, they are not equivalent for different values of $b$, and the curve is smooth at this point. Here there are $\varphi(m)$ possible values for $a$. 

If $b\neq0$, the point we add is (the image of) $(0,0)$, which shows that there is only one other point of intersection. 

As a remark, we see that the quotient map $\C^2\to\C^2/((x,y)\sim(y,x))$ can be identified with $(x,y)\mapsto(x+y,xy)$, and so we need to look at the curve
$$\{(t^a\rho^b+t^{m-a}\rho^{-b},t^m):0<|t|<1\}.$$
Since $t^a\rho^b+t^{m-a}\rho^{-b}=t^a(\rho^b+t^{m-2a}\rho^{-b})$, we notice that if $a\mid m$ then the image is smooth, but if $a\nmid m$ then it is singular. 
\end{proof}

\begin{comentario}
The deepest degeneration point on the intersection of $C_m$ with $\mathbb{P}^1_\infty$ seems to be quite complicated and is of high multiplicity; moreover, some of the branches of this intersection are singular and some are smooth. 
\end{comentario}

Putting the above results together we directly obtain the following:

\begin{theorem}
\label{teoremaclausura}
Let $m\geq 2$ and $\mathcal{E}_{m}^{\ast}$ be the closure of $\mathcal{E}_{m}$ in $\mathcal{A}_{2}.$ 

Set-theoretically we have that
$$\mathcal{E}_{m}^{\ast} = \mathcal{E}_{m}\sqcup K^{0}(1)[m]\sqcup\mP_{\infty}^{1},$$
where $K^{0}(1)[m]$ is the subvariety of $K^{0}(1)$ from Definition \ref{defKcerounom} and $\mP_{\infty}^{1}$ is the \emph{peripheral $\mP^{1}$} introduced in Theorem \ref{Adosestrella}. Moreover, $\overline{K^0(1)[m]}\cap\mathbb{P}_\infty^1$ consists of $\varphi(m)+1$ points and the boundary of $\mathcal{E}_m^*\cap\mathcal{E}_n^*$ is $\mathbb{P}_\infty^1$ if $m\neq n$.
\end{theorem}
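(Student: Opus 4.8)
The plan is to read off Theorem \ref{teoremaclausura} as a bookkeeping exercise that assembles the stratum-by-stratum computations carried out in this section. The starting point is the set-theoretic decomposition
$$\mathcal{A}_{2}^{\ast} = \mathcal{A}_{2}\sqcup K^{0}(1)\sqcup\mP_{\infty}^{1}$$
of Theorem \ref{Adosestrella}, which partitions $\mathcal{A}_2^*$ into the interior together with the corank $1$ and corank $2$ contributions $\partial_{F_1}\mathcal{A}_2^*$ and $\partial_{F_2}\mathcal{A}_2^*$. Since $\mathcal{E}_m^*\subseteq\mathcal{A}_2^*$, I would intersect $\mathcal{E}_m^*$ with each of these three pairwise disjoint pieces separately; the disjointness is exactly what guarantees that the resulting decomposition of $\mathcal{E}_m^*$ is itself a disjoint union.

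For the interior piece, $\mathcal{E}_m^*\cap\mathcal{A}_2=\mathcal{E}_m$ because $\mathcal{E}_m$ is locally closed in $\mathcal{A}_2$ and $\mathcal{E}_m^*$ is its closure. For the corank $1$ piece I would invoke the Proposition giving $\mathcal{E}_m^*\cap\partial_{F_1}\mathcal{A}_2^*=K^0(1)[m]$, whose justification rests on Lemma \ref{tresdos} and, crucially, on the reduction Lemma asserting $\overline{e_r(\mathbb{E}_m)}=\bigcup_v\overline{e_r(\mathbb{H}_2(v))}$: this last lemma is what certifies that the closure acquires no spurious boundary points beyond those detected one $v$ at a time. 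For the corank $2$ piece I would cite the Corollary to Lemma \ref{trescuatro}, giving $\mathcal{E}_m^*\cap\partial_{F_2}\mathcal{A}_2^*=\mP_\infty^1$. Concatenating the three intersections yields the displayed decomposition.

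The two remaining assertions are then immediate. The count $\overline{K^0(1)[m]}\cap\mP_\infty^1=\varphi(m)+1$ points is precisely the content of the Proposition on $C_m$, so I would simply quote it. For the final claim I would again work stratum by stratum: on the corank $1$ locus, $(\mathcal{E}_m^*\cap\mathcal{E}_n^*)\cap\partial_{F_1}\mathcal{A}_2^*=K^0(1)[m]\cap K^0(1)[n]$, which is empty for $m\neq n$ by the irreducibility Proposition (a point of order $m$ cannot have order $n$), whereas on the corank $2$ locus both closures contribute the same $\mP_\infty^1$, so the intersection there is all of $\mP_\infty^1$. Hence the boundary of $\mathcal{E}_m^*\cap\mathcal{E}_n^*$ is precisely $\mP_\infty^1$.

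I do not expect a serious obstacle, since every ingredient has already been established; the only points requiring care are conceptual rather than computational. One is to interpret ``boundary'' consistently as $\mathcal{E}_m^*\cap\bigl(\partial_{F_1}\mathcal{A}_2^*\cup\partial_{F_2}\mathcal{A}_2^*\bigr)$, so that the disjointness in Theorem \ref{Adosestrella} legitimizes handling the three strata independently. The other, and the subtlest link in the chain, is the implicit appeal to the reduction Lemma inside the corank $1$ computation: this is the one place where an \emph{a priori} larger closure could have intervened, and it is worth flagging that the whole decomposition hinges on it.
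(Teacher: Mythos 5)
Your proposal is correct and follows essentially the same route as the paper: the theorem there is stated with the proof ``putting the above results together,'' i.e.\ exactly your assembly of the stratum decomposition from Theorem \ref{Adosestrella} with the corank~1 Proposition (resting on Lemma \ref{tresdos} and the reduction lemma), the Corollary to Lemma \ref{trescuatro}, the $\varphi(m)+1$ count for $C_m$, and the disjointness $K^0(1)[m]\cap K^0(1)[n]=\varnothing$. Your flagging of the reduction lemma $\overline{e_r(\mathbb{E}_m)}=\bigcup_v\overline{e_r(\mathbb{H}_2(v))}$ as the crucial ingredient matches the paper's own emphasis, so there is nothing to add.
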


\section{Degenerate abelian surfaces}\label{degeneration}

Here we study Mumford's final example in \cite{complete-deg} in the global way that is worked out in \cite[Part II, Chapter 2]{Hulek} in order to give an interpretation of the boundary points of $\mathcal{E}_{m}^{\ast}$ as degenerate abelian surfaces which contain a degenerate elliptic curve. We will only gloss over the construction; the interested reader is invited to look at \cite[Part II, Chapters 1 and 2]{Hulek} for more details.

\subsection{Mumford's construction}

First we need to fix some notation. Let $A=\C[T_{1},T_{2},T_{3}],$ $S = \spec\hspace{0.1cm}A\cong\C^{3}$ and consider the $A$-scheme
$$\widetilde{G} = \spec\left(\frac{A[U,U^{-1},V,V^{-1},W,W^{-1}]}{\left<UVW-1\right>}\right)\cong\C^{3}\times(\C^{\times})^{2}.$$ 
Now, let $K$ be the quotient field of $A$ and denote by $\mathbb{Y}$ the subgroup of $\widetilde{G}(K)$ generated by $r$, $s$ and $t$, where
\begin{align*}
    r & = (T_{2}T_{3},T_{3}^{-1},T_{2}^{-1}) \\
    s & = (T_{3}^{-1},T_{1}T_{3},T_{1}^{-1}) \\
    t & = (T_{2}^{-1},T_{1}^{-1},T_{1}T_{2})
\end{align*}
(note that this group is clearly isomorphic to $\Z^{3}/\Z(1,1,1)\cong\Z^{2}$).  

These objects are related to the study of degenerations of abelian surfaces as follows:
\begin{itemize}
    \item Consider the action of $\Z^{2}$ in $\mathbb{H}_{2}\times(\C^{\times})^{2}$ given by
$$(m,n) : \left(\tau,(w_{1},w_{2})\right)\rightarrow\left(\tau,(e^{2\pi mi\tau_{11}}e^{2\pi ni\tau_{12}}w_{1},e^{2\pi mi\tau_{12}}e^{2\pi ni \tau_{22}}w_{2})\right)$$
and denote 
$$\widehat{\mathcal{S}}:= \left(\mathbb{H}_{2}\times(\C^{\times})^{2}\right)/\Z^{2}.$$
\item The action of $P^{\prime}(F_2)$ on $\mathbb{H}_{2}$ induces an action on $\widehat{\mathcal{S}}.$ We denote
$$\mathcal{S} = P^{\prime}(F_2)\setminus\widehat{\mathcal{S}}.$$
\item The projection $\mathbb{H}_{2}\times(\C^{\times})^{2}\rightarrow\mathbb{H}_{2}$ induces a map $\mathcal{S}\rightarrow X(F_2)$ which is the right vertical map in the following commutative diagram:
\begin{equation*}
\xymatrix{ \mathbb{H}_{2}\times(\C^{\times})^{2}\ar[d]\ar[rr]^{/\Z^{2}} & & \widehat{\mathcal{S}}\ar[d]\ar[rr]^{/P^{\prime}(F_2)} & & \mathcal{S}\ar[d] \\
\mathbb{H}_{2}\ar@{=}[rr] & & \mathbb{H}_{2} \ar[rr]_{/P^{\prime}(F_2)} & & X(F_2) }
\end{equation*} 
\end{itemize}

 Clearly we have that the fiber $\mathcal{S}_{[\tau]}$ of $\mathcal{S}$ over $[\tau]$ is the abelian surface $\C^{2}/(\Z^{2}\oplus\tau\Z^{2}).$

On the other hand, we have that $\mathbb{Y}$ acts on $\widetilde{G}(K)$ by multiplication and hence for every $T\in(\C^{\times})^{3}$ we have an action of $\mathbb{Y}$ on the fiber $\widetilde{G}_{T}.$  Consider the map $\Psi: \mathbb{H}_{2}\to\C^{3}$ given by the composition $j\circ e_{2},$ where $j:(\C^{\times})^{3}\to\C^{3}$ is the map given by
\begin{equation}
\label{varphi}
(t_{1},t_{2},t_{3})\mapsto(t_{1}t_{2},t_{2}t_{3},t_{2}^{-1}).
\end{equation}
It is easy to see that if $T=\Psi(\tau)$ for some $\tau\in\mathbb{H}_{2}$ then the quotient $\widetilde{G}_{T}/\mathbb{Y}$ is also isomorphic to the abelian surface $\C^{2}/(\Z^{2}\oplus\tau\Z^{2}).$ Moreover, we have the following commutative square: 
\begin{equation*}
\xymatrix{ \mathcal{S}\ar[r]^{[\Psi\times\mathrm{id}]}\ar[d] & \widetilde{G}/\mathbb{Y}\ar[d] \\
X(F_2)\ar[r]_{j} & \C^{3} }
\end{equation*}
which induces isomorphisms 
$$\mathcal{S}_{\tau}\cong\widetilde{G}_{\Psi(\tau)}/\mathbb{Y}.$$

The idea of Mumford's construction is to obtain a family $\mathcal{P}\to U$ defined over an open subset $U\subseteq\C^{3}$ containing $X(F_2)$ as a dense subset in such a way that the fibers over $X(F_2)$ are abelian surfaces. For this purpose, let $R = R_{\Phi,\Sigma}$ be the graded $A$-algebra defined in \cite[Part II, Definition 1.10]{Hulek} and $\widetilde{P} = \proj\hspace{0.1cm}R_{\Phi,\Sigma}.$ We have that $\widetilde{G}$ is a dense open subscheme of $\widetilde{P}$ and the inclusion $\widetilde{G}\hookrightarrow\widetilde{P}$ is a morphism of $A$-schemes. Denote by $U$ the analytic open subset of $\C^3$ given by the interior of the closure of $\Psi(\mathbb{H}_{2})$ and define
\begin{equation}
\label{PU}
\widetilde{P}_{U} := \widetilde{P}\times_{\C^3} U.
\end{equation}
We then have a natural analytic morphism $\widetilde{P}_{U}\to U$
and, furthermore, we have the following result: 

\begin{theorem}
\label{definicionP}
The group $\mathbb{Y}$ acts properly discontinuosly on $\widetilde{P},$ we have a morphism $\widetilde{P}_{U}/\mathbb{Y}\to U$ and this morphism defines a proper and flat family over $U$ where a general fiber is an abelian surface.   
\end{theorem}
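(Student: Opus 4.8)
The plan is to deduce the statement from the structure of the relatively complete model $\widetilde{P} = \proj R_{\Phi,\Sigma}$ worked out in \cite[Part II, Chapter 1]{Hulek}, restricting everything over the open set $U$. Recall from that construction that $\mathbb{Y}$ acts on $\widetilde{P}$ by $A$-automorphisms (this is built into the definition of the graded algebra $R_{\Phi,\Sigma}$, on which $\mathbb{Y}$ acts compatibly with the grading), that the open immersion $\widetilde{G}\hookrightarrow\widetilde{P}$ is $\mathbb{Y}$-equivariant and fibrewise dense, and that the structure morphism $\widetilde{P}\to S$ is flat. Thus, after passing to $U$, the two nontrivial things to verify are: (i) that the $\mathbb{Y}$-action on $\widetilde{P}_U$ is properly discontinuous, and (ii) that the induced family over $U$ is proper, flat, and has abelian surfaces as general fibres. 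Note that since every element of $\mathbb{Y}\subseteq\widetilde{G}(K)$ acts fibrewise over $S$, once (i) is known the quotient $\widetilde{P}_U/\mathbb{Y}$ exists as an analytic space and the $\mathbb{Y}$-invariant projection $\widetilde{P}_U\to U$ descends to the desired morphism $\widetilde{P}_U/\mathbb{Y}\to U$.

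For the proper discontinuity I would use the combinatorial description of $\widetilde{P}$ in terms of the fan $\Sigma$: the scheme $\widetilde{P}$ is covered by the $A$-affine charts attached to the cells of $\Sigma$, the group $\mathbb{Y}$ permutes these charts with trivial stabilisers, and the charts meeting a fixed fibre form a locally finite pattern. The essential input is the choice of $U$ as the interior of $\overline{\Psi(\mathbb{H}_{2})}$: over a point $\Psi(\tau)$ the positivity of $\mathrm{Im}(\tau)$ (equivalently, $\tau\in\mathbb{H}_{2}$) translates into positive-definiteness of the quadratic form $\Phi$ governing the grading of $R_{\Phi,\Sigma}$, and this positivity is exactly what forces the $\mathbb{Y}$-translates of any compact subset of $\widetilde{P}_U$ to meet it only finitely often. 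This yields proper discontinuity, the action is moreover free, and the quotient map $\widetilde{P}_U\to\widetilde{P}_U/\mathbb{Y}$ is a local isomorphism of analytic spaces. The same finiteness shows that a fundamental set for $\mathbb{Y}$ consists of finitely many cells over any quasi-compact part of $U$, whence $\widetilde{P}_U/\mathbb{Y}\to U$ is proper (alternatively one invokes the valuative criterion, the fibres being cycles of toric pieces).

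Flatness of $\widetilde{P}_U/\mathbb{Y}\to U$ then follows by descent: $\widetilde{P}_U\to U$ is flat by base change from $\widetilde{P}\to S$, the quotient map $\widetilde{P}_U\to\widetilde{P}_U/\mathbb{Y}$ is faithfully flat, and flatness descends along it. Finally, over the dense open locus $T_{1}T_{2}T_{3}\neq 0$ in $U$ the extra cells of $\widetilde{P}$ do not appear, so there $\widetilde{P}_U$ agrees with $\widetilde{G}$; combined with the commutative square preceding the theorem, which identifies $\widetilde{G}_{\Psi(\tau)}/\mathbb{Y}$ with $\C^{2}/(\Z^{2}\oplus\tau\Z^{2})$, this shows the general fibre is an abelian surface. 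I expect the main obstacle to be step (i) over the boundary of $U$: verifying proper discontinuity (and the closely related properness) precisely on the degenerate fibres is the technical heart of Mumford's construction, and it is here that the identification of $U$ with the positive-definite region must be used in an essential way. Away from the boundary everything reduces to the elementary fact that a lattice acts freely and properly on a torus.
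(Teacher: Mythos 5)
The first thing to note is that the paper does not prove this theorem at all: its ``proof'' is the citation \cite[Part II, Theorem 3.10]{Hulek} (plus \cite[Part II, Definition 1.9]{Hulek} for the action), the statement being exactly that theorem specialized to $g=2$ and the Legendre decomposition. So your proposal necessarily takes a different route: you reconstruct the argument that the paper outsources. Your skeleton is the correct one, and it is in fact the skeleton of the proof in the cited book: $\mathbb{Y}$ acts fibrewise over $S$ by $A$-automorphisms, positivity of $\mathrm{Im}(\tau)$ (i.e.\ membership in $U$) is what gives proper discontinuity and freeness, flatness descends along the covering map $\widetilde{P}_U\to\widetilde{P}_U/\mathbb{Y}$, and the general fibre is identified through $\widetilde{G}_{\Psi(\tau)}/\mathbb{Y}\cong\C^{2}/(\Z^{2}\oplus\tau\Z^{2})$, using that $\widetilde{P}$ and $\widetilde{G}$ agree over the locus where no coordinate vanishes. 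What your version buys is an actual explanation of why the theorem is true; what the paper's citation buys is correctness without having to reproduce the long verification in \cite[Part II, Chapters 1--3]{Hulek}.

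That said, two steps of your sketch would not survive being made precise as stated. First, the claim that $\mathbb{Y}$ permutes the toric charts ``with trivial stabilisers'' is false along the corank~1 part of the boundary of $U$: there the fibre of $\widetilde{P}$ is a chain of components $Z_k\cong\C^{\times}\times\mP^{1}$, and the stabiliser in $\mathbb{Y}\cong\Z^{2}$ of each $Z_k$ is infinite cyclic --- compare the paper's own later remark (proof of the fibre description) that $r$ acts \emph{on} each $Z_k$ while $s$ shifts $Z_k$ to $Z_{k+1}$. The stabiliser acts on the $\C^{\times}$-factor by multiplication by a scalar of modulus $\neq 1$ (a Tate-curve action), so proper discontinuity there is a separate, genuinely needed verification, and it is precisely where positivity enters; it does not follow from chart combinatorics alone. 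Second, properness of $\widetilde{P}_U/\mathbb{Y}\to U$ is not a consequence of local finiteness of cells over compact sets: the boundary fibres of $\widetilde{P}_U$ have infinitely many components, and what makes the quotient proper is Mumford's relative completeness condition on $R_{\Phi,\Sigma}$ together with finiteness of the $\mathbb{Y}$-action on components. You yourself flag these two points as ``the technical heart'', so in substance your proposal is an honest outline of the cited proof with the hard steps deferred --- functionally the same level of completeness as the paper's citation, but not yet a self-contained proof.
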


\begin{proof}
See \cite[Part II, Theorem 3.10]{Hulek}. For the definition of the action see \cite[Part II, Definition 1.9]{Hulek}
\end{proof}

\begin{definicion}
We write $P\to U$ for the family of the above theorem.
\end{definicion}

\subsection{Families of non-simple abelian surfaces}

Using the matrices employed in the proofs of Lemmas \ref{tresdos} and \ref{trescuatro} we can obtain families whose fibers are non-simple principally polarized abelian surfaces which contain an elliptic curve of exponent $m$. For this, consider the sets
\begin{equation}
\label{Oce}
O_{(c,e)} = \Psi\left(\left\{\begin{pmatrix}
                     \mu & -\frac{c\tau+e}{m} \\
                     -\frac{c\tau+e}{m} & \tau
                     \end{pmatrix}\in\mathbb{H}_{2}\right\}\right)\subseteq\C^{3},
\end{equation}
and 
\begin{equation}
\label{Oinfty}
O_{\infty} = \Psi\left(\left\{\begin{pmatrix}
                                   \tau & z \\
                                   z & \frac{1}{m-1}\left(\tau-(m-2)z\right) 
                                   \end{pmatrix}\in\mathbb{H}_{2}\right\}\right)\subseteq\C^{3}.
\end{equation}
Restricting $\mathcal{S}\to X(F_2)$ to $O_{i},$ $i\in\{(c,e)\in\N^{2} : \mathrm{g.c.d}(m,c,e)=1\}\cup\{\infty\}$ we get families $\mathcal{S}_{i}\rightarrow O_{i}$ with the desired properties.

Since the fiber $(\mathcal{S}_{i})_{T}$ is a non-simple abelian surface for every $T\in O_{i}$, we have that all of these surfaces contain an elliptic curve. Now we will show that we can actually put these elliptic curves together to form a family. More precisely, we will prove the following:

\begin{prop}
\label{existenciafamilias}
Let $i\in\{(c,e)\in\N^{2}: \mathrm{g.c.d}(m,c,e)=1\}\cup\{\infty\}$ and consider the morphism $\mathcal{S}_{i}\to O_{i}$ defined above. There exists a morphism $\mathcal{K}_{i}\to O_{i}$ and an $O_{i}$-morphism $\mathcal{K}_{i}\to\mathcal{S}_{i}$ such that
\begin{enumerate}
    \item each fiber of the map $\mathcal{K}_{i}\to O_{i}$ is an elliptic curve
    \item for every $T\in O_{i}$ the induced map $(\mathcal{K}_{i})_{T}\hookrightarrow(\mathcal{S}_{i})_{T}$ is an inclusion whose image is an abelian subvariety of exponent $m$
\end{enumerate}
\end{prop}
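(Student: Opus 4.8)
The plan is to build the family $\mathcal{K}_i\to O_i$ uniformly, fiber by fiber, from a single complex line $L\subseteq\C^2$ that does \emph{not} depend on the modulus, and then to check that this line descends correctly through the two quotients $/\Z^2$ and $/P'(F_2)$ that define $\mathcal{S}$. Throughout I write $\lambda_1=\binom{1}{0}$, $\lambda_2=\binom{0}{1}$, $\lambda_3=\binom{\tau_{11}}{\tau_{12}}$, $\lambda_4=\binom{\tau_{12}}{\tau_{22}}$ for the columns of $(I_2\mid\tau)$, so that $\mathcal{S}_{[\tau]}=\C^2/\Lambda_\tau$ with $\Lambda_\tau=\Z\lambda_1+\Z\lambda_2+\Z\lambda_3+\Z\lambda_4$, and the elliptic curve sought in each fiber will be $E_\tau:=L/(\Lambda_\tau\cap L)$.

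In the finite case $v=(0,m,c,0,e)$ I would take $L=\{z_2=0\}$. Equation \eqref{2} says exactly that the lattice vector $e\lambda_2+m\lambda_3+c\lambda_4$ has second coordinate $e+m\tau_{12}+c\tau_{22}=0$, hence lies on $L$; its first coordinate is $m\tau_{11}+c\tau_{12}$, and together with $\lambda_1$ it spans a rank-two sublattice of $\Lambda_\tau\cap L$. Thus $E_\tau$ is an elliptic curve. In the case $i=\infty$, where $v=(1,-(m-2),-(m-1),0,0)$, the same computation with $L=\{z_1+z_2=0\}$ shows that the two coordinates of $\lambda_3-(m-1)\lambda_4$ sum to the left-hand side of \eqref{2} and therefore vanish in sum, so this vector lies on $L$ and, together with $\lambda_1-\lambda_2$, spans a rank-two sublattice of $\Lambda_\tau\cap L$.

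Passing to Mumford's coordinates $(w_1,w_2)=(e^{2\pi i z_1},e^{2\pi i z_2})$ on $(\C^\times)^2$, the line $L$ becomes the monomial locus $\{w_2=1\}$ in the finite case and $\{w_1w_2=1\}$ when $i=\infty$. I would therefore define $\widehat{\mathcal{K}}_i\subseteq\mathbb{H}_2\times(\C^\times)^2$ as the preimage of this locus over the two-dimensional family of period matrices parametrizing $O_i$, and take $\mathcal{K}_i$ to be its image after the successive quotients; the inclusion of the subtorus $L/(\Lambda_\tau\cap L)\hookrightarrow\C^2/\Lambda_\tau$ supplies the $O_i$-morphism $\mathcal{K}_i\to\mathcal{S}_i$. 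In the finite case the coordinate $w_1$ identifies $E_\tau$ with $\C^\times/q^{\Z}$ for $q=e^{2\pi i(m\tau_{11}+c\tau_{12})}$, and one has $|q|<1$ because $\mathrm{Im}\,\tau$ is positive definite and $m>0$ (writing $\mathrm{Im}\,\tau\binom{m}{c}=\binom{s}{0}$, positive-definiteness forces $ms>0$); an analogous parametrization handles $i=\infty$. Since $q$ varies holomorphically, $\mathcal{K}_i\to O_i$ is a family of elliptic curves. That the image has exponent $m$ is precisely the numerical content of \eqref{1}: the invariant $m^2=b^2-4(ac+de)$ is what computes $\deg(\Theta|_{E_\tau})$, so I would cite \cite{Kani,num-char} for this last point (or compute the relevant lattice index directly).

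The step I expect to be the real obstacle is the descent through $P'(F_2)$ together with the residual part of $\Z^2$. Two things must be verified: first, that the monomial locus $\{w_2=1\}$ (resp. $\{w_1w_2=1\}$) is preserved, as a set, by the subgroup of $\Z^2$ and by the elements of $P'(F_2)$ stabilizing the chosen slice, so that its image is a well-defined subscheme of $\mathcal{S}_i$; and second, that the lattice acting along $L$ is \emph{exactly} $\Lambda_\tau\cap L$, so that the fiber of $\mathcal{K}_i$ is the full curve $E_\tau$ and the map to $\mathcal{S}_i$ is a genuine inclusion rather than a finite cover of its image. The identifications $\Lambda_\tau\cap L=\langle\lambda_1,\,e\lambda_2+m\lambda_3+c\lambda_4\rangle$ and $\Lambda_\tau\cap L=\langle\lambda_1-\lambda_2,\,\lambda_3-(m-1)\lambda_4\rangle$ hinge on the primitivity of $v$, i.e. on the hypothesis $\gcd(m,c,e)=1$, which is what guarantees saturation and hence that no intermediate isogenous curve appears. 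Once this is settled, properness and flatness of $\mathcal{K}_i\to O_i$ follow from those of $\mathcal{S}_i\to O_i$.
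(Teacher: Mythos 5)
Your proposal is correct and is essentially the paper's own proof: the paper uses exactly the same two lines ($W_{(c,e)}=\langle(1,0)\rangle_{\C}=\{z_{2}=0\}$ and $W_{\infty}=\langle(-1,1)\rangle_{\C}=\{z_{1}+z_{2}=0\}$), exponentiates them to the loci $\{U=1\}$ and $\{UV=1\}$ in $(\C^{\times})^{2}$, and defines $\mathcal{K}_{i}$ by restricting $\mathcal{S}_{i}\to O_{i}$ to the image of $O_{i}\times L_{i}$. The only real difference is bookkeeping: where you verify the rank-two sublattice and its saturation by hand, the paper gets both the dimension and the exponent-$m$ statement at once by citing \cite[Proposition 3.2]{num-char} (the matrix $m\cdot\mathrm{id}-JM_{v}$).
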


In other words, we now proceed to show that there exists a commutative diagram
\begin{equation*}
    \xymatrix{ \mathcal{K}_{i} \ar[rr]\ar[rd] &  & \mathcal{S}_{i}\ar[ld] \\
    & O_{i} & }
\end{equation*}
with the properties stated in the proposition above.

 Let $\Lambda_{\tau}$ denote the lattice in $\C^{2}$ generated by the columns of the matrix $\begin{pmatrix} \mathrm{id} & \tau \end{pmatrix}$ and write $A_{\tau}: = \C^{2}/\Lambda_{\tau}.$ We have that every elliptic curve $E\leq A_{\tau}$ is of the form $W/(W\cap\Lambda_{\tau}),$ where $W$ is a linear subspace of $\C^{2}$ with $\dim_{\C}(W)=1.$ According to \cite{num-char} we have that if $\tau\in\mathbb{H}_{2}(v)$ then we can explicitly compute a linear subspace $W_{v}\leq\C^{2}$ in such a way that the curve $W_{v}/(W_{v}\cap\Lambda_{\tau})$ is a subvariety of exponent $m$ of $A_{\tau}$: 

\begin{lemma} We have the following:
\begin{enumerate}[label=\roman*)]
\item Let $(c,e)\in\Z^{2}$ with $\mathrm{g.c.d}(m,c,e)=1$ as in Lemma \ref{tresdos} and $v_{(c,e)}=(0,m,c,0,e).$ For $\tau\in\mathbb{H}_{2}(v_{(c,e)})$ we have that the linear subpspace $W_{(c,e)} := \left<(1,0)\right>_{\C}\leq\C^2$ defines an elliptic curve of exponent $m$ contained in $A_{\tau}.$ 
\item Let $v_{\infty} = (1,-(m-2),-(m-1),0,0)$ and $\tau\in\mathbb{H}_{2}(v_{\infty}).$ We have that the linear subspace $W_{\infty} := \left<(-1,1)\right>_{\C}$ defines an elliptic curve of exponent $m$ contained in $A_{\tau}.$ 
\end{enumerate}
\end{lemma}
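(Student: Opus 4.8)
The plan is to handle both cases by the same three-step recipe. Write $A_\tau=\C^2/\Lambda_\tau$ with $\Lambda_\tau$ spanned by the columns $\lambda_1,\lambda_2,\lambda_3,\lambda_4$ of $(\mathrm{id}\mid\tau)$, and recall that the principal polarization $\Theta$ is given in this basis by the alternating form with matrix $J=\left(\begin{smallmatrix}0&I_2\\-I_2&0\end{smallmatrix}\right)$. For a fixed complex line $W\leq\C^2$, its image $\overline W$ in $A_\tau$ is the subtorus $W/(W\cap\Lambda_\tau)$; it is an elliptic curve exactly when $M:=W\cap\Lambda_\tau$ has rank $2$, the inclusion $\overline W\hookrightarrow A_\tau$ is a closed immersion exactly when $M$ is saturated in $\Lambda_\tau$, and in that case $\deg(\Theta|_{\overline W})=|\mu_1^{T}J\mu_2|$ for any $\Z$-basis $\{\mu_1,\mu_2\}$ of $M$. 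Thus the statement reduces to (a) exhibiting a rank-$2$ saturated sublattice of $W\cap\Lambda_\tau$ and (b) computing one Pfaffian.

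First I would write down explicit lattice elements. In case (i), requiring a general lattice vector to lie on $W_{(c,e)}=\langle(1,0)\rangle$ amounts to the vanishing of its second coordinate $n_2+n_3\tau_{12}+n_4\tau_{22}$; feeding in the relation $m\tau_{12}+c\tau_{22}+e=0$ valid on $\mathbb{H}_2(v_{(c,e)})$ shows that $\lambda_1=(1,0,0,0)$ and $\mu:=e\lambda_2+m\lambda_3+c\lambda_4=(0,e,m,c)$ both lie in $M$. In case (ii), the line $W_\infty$ is cut out by $x+y=0$, and using $\tau_{11}=(m-2)\tau_{12}+(m-1)\tau_{22}$ one checks that $\mu_1:=\lambda_1-\lambda_2=(1,-1,0,0)$ and $\mu_2:=\lambda_3-(m-1)\lambda_4=(0,0,1,-(m-1))$ lie in $M$. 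I would then verify that the two vectors are $\R$-linearly independent, so that they span $W$ over $\R$; in case (i) this is where positive-definiteness of $\mathrm{Im}\,\tau$ is used, since it forces $\mathrm{Im}(m\tau_{11}+c\tau_{12})>0$ and hence $\mu\notin\R\lambda_1$.

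The crucial step is saturation. I would show that the rank-$2$ sublattice $M_0$ generated by the two vectors is saturated in $\Lambda_\tau\cong\Z^4$ by checking that the gcd of the $2\times2$ minors of the integer $2\times4$ matrix formed by the two vectors is $1$: in case (i) these minors are $e,m,c$ (and zeros), with gcd $1$ by the hypothesis $\gcd(m,c,e)=1$; in case (ii) a minor equals $\pm1$, so the gcd is trivially $1$. Saturation does double duty: it makes $\overline W\hookrightarrow A_\tau$ a closed immersion, and --- since $M_0$ is saturated with $\R$-span equal to $W$ --- it forces $W\cap\Lambda_\tau=(\R\text{-span }M_0)\cap\Lambda_\tau=M_0$ for \emph{every} $\tau\in\mathbb{H}_2(v)$, not merely for generic $\tau$. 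With $M_0$ identified as the full intersection lattice, the last step is the Pfaffian: a direct application of $J$ gives $\lambda_1^{T}J\mu=m$ in case (i) and $\mu_1^{T}J\mu_2=1+(m-1)=m$ in case (ii), so $\deg(\Theta|_{\overline W})=m$ and $\overline W$ is an elliptic curve of exponent $m$.

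The minor and Pfaffian computations are routine; the point that needs care, and which I expect to be the main obstacle, is precisely the jump from generic $\tau$ to all $\tau\in\mathbb{H}_2(v)$. A priori $W\cap\Lambda_\tau$ could grow for special $\tau$ (for instance those giving $A_\tau$ extra endomorphisms), which would make the exponent drop below $m$; the saturation argument is exactly what excludes this, and it in turn rests on the $\R$-independence of the two exhibited generators, where the positive-definiteness of $\mathrm{Im}\,\tau$ is essential in case (i).
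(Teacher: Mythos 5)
Your proof is correct, but it follows a genuinely different route from the paper's. The paper's proof is essentially a one-liner modulo a citation: it introduces the matrices $J$ and $M_v$ and invokes \cite[Proposition 3.2]{num-char}, which states that for $\tau\in\mathbb{H}_{2}(v)$ the image of the $\R$-linear map $\R^{4}\to\C^{2}$ given in the bases $C$, $B$ by $m\cdot\mathrm{id}-JM_{v}$ is a one-dimensional abelian subvariety of $A_{\tau}$ of exponent $m$; the proof then consists of computing this image for $v_{(c,e)}$ and $v_{\infty}$ and recognizing it as $W_{(c,e)}$, resp.\ $W_{\infty}$. You instead prove the statement from scratch: you exhibit explicit elements of $W\cap\Lambda_{\tau}$ (in lattice coordinates, $(1,0,0,0)$ and $(0,e,m,c)$ in case (i), $(1,-1,0,0)$ and $(0,0,1,-(m-1))$ in case (ii)), show via the gcd of the $2\times2$ minors that they span a \emph{saturated} rank-$2$ sublattice --- which therefore equals $W\cap\Lambda_{\tau}$ for every $\tau\in\mathbb{H}_{2}(v)$, special or not --- and compute the Pfaffian of the restricted symplectic form, getting $m$ in both cases. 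Your computations check out: the relations $m\tau_{12}+c\tau_{22}+e=0$ and $\tau_{11}=(m-2)\tau_{12}+(m-1)\tau_{22}$ are the correct specializations of equation \eqref{2}, the minors have gcd $1$, and both Pfaffians equal $m$. What each approach buys: the paper's argument is shorter and stays inside the framework of \cite{num-char} already used to define $\mathbb{H}_{2}(v)$, while yours is self-contained, makes the degree computation visible, and explicitly rules out growth of the intersection lattice at special $\tau$, in effect reproving the needed special case of \cite[Proposition 3.2]{num-char}. One simplification you could make: the $\R$-linear independence you single out as the delicate point is automatic in both cases, because $\det\Im\tau\neq0$ makes the columns of $(\mathrm{id}\mid\tau)$ an $\R$-basis of $\C^{2}\cong\R^{4}$, so any two $\Z$-independent integer combinations of them are $\R$-independent; in particular, case (ii) relies on positive definiteness exactly as much as case (i), but neither requires a separate computation such as $\Im(m\tau_{11}+c\tau_{12})>0$.
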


\begin{proof}
Let $J$ denote the matrix 
$$\begin{pmatrix}
   0 & -\mathrm{id} \\
   \mathrm{id} & 0 
   \end{pmatrix}\in\mathbb{M}_{4\times 4}(\R),$$
where $0\in\mathbb{M}_{2\times 2}(\R)$ is the zero matrix and for $v=(a,b,c,d,e)\in\Z^{5}$ define
$$M_{v} = \begin{pmatrix}
0 & d & -\frac{b-m}{2} & a \\
-d & 0 & -c & \frac{b+m}{2} \\
\frac{b-m}{2} & c & 0 & -e \\
-a & -\frac{b+m}{2} & e & 0
\end{pmatrix}\in\mathbb{M}_{4\times 4}(\R).$$
Now, let $C$ denote the canonical basis of $\R^{4}$ and $B$ denote the $\R$-basis of $\C^{2}$ given by the columns of the matrix $\begin{pmatrix} \mathrm{id} & \tau \end{pmatrix}.$ According to \cite[Proposition 3.2]{num-char} we have that if $\tau\in\mathbb{H}_{2}(v)$ then the image of the $\R$-linear transformation $\R^{4}\to\C^{2}$ given in basis $C,B$ by the matrix $m\cdot\mathrm{id}-JM_{v}$ defines an abelian subvariety of $A_{\tau}$ of dimension 1 and exponent $m.$ 

Using the above fact, a direct computation finishes the proof. 

\end{proof}

\begin{proof}[Proof of Proposition \ref{existenciafamilias}]
The images in $(\C^{\times})^2$ of the linear spaces $W_{(c,e)}$ and $W_{\infty}$ of the Lemma above by the exponential map are the sets
$$L_{(c,e)} = \{(V,U)\in(\C^{\times})^{2} : U = 1\}$$
and 
$$L_{\infty} = \{(V,U)\in(\C^{\times})^2 : UV = 1\}.$$
We get the desired families $\mathcal{K}_{i}\to O_{i}$ simply by restricting the map $\mathcal{S}_{i}\to O_{i}$ to the image of the set $O_{i}\times L_{i}\subseteq\C^{3}\times(\C^{\times})^{2}$ in $\mathcal{S}.$

\end{proof}

In the next section we will use Mumford's construction to extend the above families to proper and flat families defined over  $\overline{O}_{i}\cap U$, obtaining degenerate abelian surfaces which contain a degenerate elliptic curve.

\subsection{Degenerations of abelian subvarieties}

Recall that an analytic space $A_{0}$ is called a \textbf{degenerate abelian surface} (resp. degenerate elliptic curve) if there exists a smooth space $B,$ a dense open set $U\subseteq B$ and a proper and flat family $\mathcal{X}\to B$ such that the fiber $\mathcal{X}_{t}$ is an abelian surface (resp. elliptic curve) for every $t\in U$ and there exists $t_{0}\in B$ such that $\mathcal{X}_{t_0}\cong A_{0}.$

In this subsection we will prove the following result:

\begin{theorem}
\label{main}
Let $m\geq 2$ be a fixed integer, $\mathcal{E}_{m}$ the subset of $\mathcal{A}_{2}$ given by the classes of non-simple principally polarized abelian surfaces which contain an elliptic curve of exponent $m$ and $\mathcal{E}_{m}^{\ast}$ be the closure of $\mathcal{E}_{m}$ in $\mathcal{A}_{2}^{\ast},$ where $\mathcal{A}_{2}^{\ast}$ is the toroidal compactification of $\mathcal{A}_{2}$ associated to the Legendre decomposition. There exists a finite collection of finite maps $\pi_{i}:X_{i}\to\mathcal{E}_{m}^{\ast}$ such that
\begin{enumerate}
    \item The boundary of $\mathcal{E}_{m}^{\ast}$ is contained in the union of the images of the maps $\pi_{i}$
    \item For each $i$ there are (explicit) analytic morphisms $Q_{i}\to X_{i},P_{i}\to X_{i}$ and a $X_{i}$-morphism $Q_{i}\to P_{i}$ such that:
     \begin{enumerate}
         \item For every $T\in X_{i}$ we have that the fiber $(Q_{i})_{T}$ is a degenerate elliptic curve and the fiber $(P_{i})_{T}$ is a degenerate abelian surface. Furthermore, if $\pi_{i}(T)\in\mathcal{E}_{m}$ then $(Q_{i})_{T}$ is an elliptic curve and $(P_{i})_{T}$ is a non-simple principally polarized abelian surface
         \item For every $T\in X_i$ the morphism $Q_{i}\to P_{i}$ induces an inclusion $(Q_{i})_{T}\hookrightarrow (P_{i})_{T}.$ Furthermore, if $\pi_{i}(T)\in\mathcal{E}_{m}$ then this inclusion represents a subvariety of exponent $m.$
     \end{enumerate}
\end{enumerate}
\end{theorem}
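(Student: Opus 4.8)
The plan is to realize $P_i$ and $Q_i$ as closures of the already-constructed families $\mathcal{S}_i\to O_i$ and $\mathcal{K}_i\to O_i$ of Proposition~\ref{existenciafamilias} inside Mumford's global family $P\to U$ of Theorem~\ref{definicionP}, and then to extract the special fibers from the toric data. For each relevant index $i$ I would set $X_i:=\overline{O}_i\cap U$, the closure in $\C^3$ of the two-dimensional locus $O_i\subseteq X(F_2)$ intersected with the open set $U$ on which $P$ is defined. The map $\pi_i:X_i\to\mathcal{E}_m^*$ is the one induced by the toroidal quotient and gluing maps $q_{F_2}$, $p_{F_2}^*$ of Section~\ref{preliminaries} (and $q_{F_1}$, $p_{F_1}^*$ on the corank-one boundary), restricted to $X_i$; since $P''(F_2)$ acts properly discontinuously with finite stabilizers, this restriction is finite onto its image. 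That only finitely many $i$ occur follows from the identification, established in the proof that $K^0(1)[m]$ is irreducible, of all the loci $O_{(c,e)}$ with $\gcd(m,c,e)=1$ under $\Sp(4,\Z)$, together with the single locus $O_\infty$ and the finitely many deep strata corresponding to the $\varphi(m)+1$ intersection points. Property~(1) is then exactly the content of Lemmas~\ref{tresdos} and~\ref{trescuatro}: the images of the $O_{(c,e)}$ sweep out $K^0(1)[m]$ and the image of $O_\infty$ sweeps out $\mP_\infty^1$, so the boundary of $\mathcal{E}_m^*$ lies in $\bigcup_i\pi_i(X_i)$.

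I would then define $P_i:=(\widetilde{P}_U/\mathbb{Y})\times_U X_i$, the base change of Mumford's family along $X_i\hookrightarrow U$. By Theorem~\ref{definicionP} the family $\widetilde{P}_U/\mathbb{Y}\to U$ is proper and flat with abelian-surface general fiber; both properties are stable under base change, so $P_i\to X_i$ is proper and flat, its fibers over $\pi_i^{-1}(\mathcal{E}_m)$ are the non-simple principally polarized abelian surfaces $\mathcal{S}_i$, and its fibers over the boundary are degenerate abelian surfaces by definition. This produces $P_i\to X_i$ and the abelian-surface half of~(2a).

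For the elliptic curves I would take $Q_i$ to be the closure inside $P_i$ of the subfamily $\mathcal{K}_i\to O_i$, whose fibers over $O_i$ are the elliptic curves cut out by the one-parameter subgroups $L_{(c,e)}$ and $L_\infty$ of Proposition~\ref{existenciafamilias}. The two claims to verify --- and I expect this to be the main obstacle --- are that this closure is flat over $X_i$ and that its boundary fibers are degenerate elliptic curves, concretely the $m$-gons, nodal curves and polygons of the list in the statement. This demands an explicit computation: one must describe the restriction of the graded algebra $R_{\Phi,\Sigma}$, equivalently of $\widetilde{P}$, to the loci $O_i\times L_i\subseteq\C^3\times(\C^\times)^2$, identify the toric boundary strata into which the degenerating $L_i$ limits, and then pass to the quotient by $\mathbb{Y}$, whose identifications are exactly what glue chains of $\mathbb{P}^1$'s into the cyclic polygons. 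Flatness can be checked fiberwise via the constancy of the Hilbert polynomial once one knows the limits are reduced one-dimensional cycles, but pinning down the number of components of each limit --- where the dependence on $m$ and on the chosen stratum of $\mP_\infty^1$ enters --- is the delicate toric book-keeping that yields the case distinction in the statement.

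Finally, properties~(2a) and~(2b) over $\mathcal{E}_m$ are inherited directly: over $\pi_i^{-1}(\mathcal{E}_m)$ the fiber $(Q_i)_T$ is the honest elliptic curve $W_i/(W_i\cap\Lambda_\tau)$ of the preceding lemma, the inclusion $(Q_i)_T\hookrightarrow(P_i)_T$ is the abelian-subvariety inclusion furnished there, and its exponent equals $m$ by the computation with $m\cdot\mathrm{id}-JM_v$. The required $X_i$-morphism $Q_i\to P_i$ is the inclusion of closures. Thus everything reduces to the explicit determination of the special fibers of $Q_i$ and $P_i$, which produces the four cases.
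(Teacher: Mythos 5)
Your construction reproduces the paper's proof skeleton essentially verbatim: the same $X_i=\overline{O}_i\cap U$, the same $\pi_i$ (though the paper uses only the corank-two maps $p_{F_2}^{\ast}\circ q_{F_2}$, which already reach the points over $K^0(1)[m]$, rather than invoking the corank-one maps as well), the same $P_i=P\times_U X_i$, and $Q_i$ obtained by closing up the family $\mathcal{K}_i$ of Proposition \ref{existenciafamilias} inside Mumford's family. The paper takes this closure upstairs, setting $\widetilde{Q}$ equal to the closure of $\widetilde{H}$ in $\widetilde{P}$ and identifying it algebraically as $\proj(R/I)$ (Lemma \ref{lemaproj}), before pushing down to $P_i$; this is what makes the subsequent fiber computation tractable. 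Points (1), (2b) and the $P_i$-half of (2a) are handled exactly as you handle them.

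The gap is that the step you set aside as ``delicate toric book-keeping'' is the actual content of the paper's proof, and your outline omits the one idea that makes it work. The full group $\mathbb{Y}$ does \emph{not} stabilize $\widetilde{H}$ or $\widetilde{H}_{\infty}$, so ``pass to the quotient by $\mathbb{Y}$'' has no fiberwise meaning until one determines which elements act on the fibers: the paper shows (Lemma \ref{subgrupoactuando}, using the containments $O_{(c,e)}\subseteq\{T_2^c=T_3^{m-c}\}$ and $O_\infty\subseteq\{T_1=T_2^{m-1}\}$) that only the rank-one subgroups $\mathbb{Y}_{c}=(r^{c}s^{m})$ and $\mathbb{Y}_{\infty}=(r^{-(m-1)}s)$ act, and then checks that these account for \emph{all} identifications on the boundary fibers (Proposition \ref{fibras-tilde}). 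Since the fiber of $\widetilde{Q}$ over a boundary point is an \emph{infinite} chain of $\mP^1$'s, it is precisely this subgroup, and the amount by which its generator shifts the chain, that makes the quotient fiber a compact polygon and determines its number of sides. Moreover, this computation is needed already for Theorem \ref{main}, not only for the explicit list of Theorem \ref{main2}: properness and flatness of $Q_i\to X_i$ --- without which the boundary fibers do not qualify as ``degenerate elliptic curves'' in the sense of the definition --- are deduced in the paper only from the completed fiber description, via \cite[Lemma 1, p.56 and Corollary p.158]{complexgeom}, and your proposed alternative (constancy of the Hilbert polynomial) equally presupposes that the limit fibers have been computed. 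So the proposal is a faithful skeleton of the paper's argument, but Lemmas \ref{subgrupoactuando} and \ref{lemaproj}, Proposition \ref{fibras-tilde}, and the final fiber proposition are exactly the substance it leaves unproved.
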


Once the above result is established, we will prove the following theorem:

\begin{theorem}\label{main2}
The pairs (normalised degenerate abelian surface, degenerate elliptic curve) which arise under the families $P_{i},Q_{i}$ are
\begin{itemize}
    \item (certain $\mP^{1}$-bundle over certain elliptic curve, $m$-gon of $\mP^{1}$'s) 
    \item ($\mP^{1}\times\mP^{1}$, nodal curve) if $m=2$
    \item ($\mP^{1}\times\mP^{1}$, $(m-1)$-gon of $\mP^{1}$'s) if $m\geq 3$
    \item (the union of two copies of $\mP^{2}$ with one of the blow-up of $\mP^{2}$ at three points, $(2m-1)$-gon of $\mP^{1}$'s)
\end{itemize}
where each pair can be explicitly calculated. 
\end{theorem}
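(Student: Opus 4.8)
The plan is to handle each family $P_i\to X_i$ from Theorem \ref{main} separately and, for every boundary stratum reached by the corresponding $O_i$, to compute the central fibre of Mumford's family $\widetilde{P}/\mathbb{Y}$ explicitly and then to intersect it with the closure of the subtorus family $Q_i$ built from $L_i$ in Proposition \ref{existenciafamilias}. The first thing I would record is the fibrewise recipe furnished by the construction of \cite{Hulek}, Part II: over a boundary point $T_0\in\C^3$ the fibre $\widetilde{P}_{T_0}$ of $\proj R_{\Phi,\Sigma}$ is a union of projective toric surfaces, one for each top-dimensional cell of the $\mathbb{Y}$-periodic polyhedral decomposition of $\R^2$ cut out by the vanishing orders of $T_1,T_2,T_3$ at $T_0$ (for a corank $1$ point only one torus direction degenerates and the relevant decomposition lives on $\R$). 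These pieces are glued along the toric divisors coming from shared facets, and the degenerate abelian surface is the quotient of the glued model by the $\mathbb{Y}\cong\Z^2$-action. Since each toric piece is normal, the normalisation is simply their disjoint union, and each piece is read off from the shape of its cell: a triangle gives $\mP^2$, a parallelogram gives $\mP^1\times\mP^1$, and a regular hexagon gives the degree six del Pezzo surface, i.e. the blow-up of $\mP^2$ at three points. Thus the computation reduces to identifying the decomposition and listing its cells modulo $\mathbb{Y}$.

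Next I would evaluate $\lim\Psi$ along each family using $\Psi(\tau)=(e^{2\pi i(\tau_{11}+\tau_{12})},e^{2\pi i(\tau_{12}+\tau_{22})},e^{-2\pi i\tau_{12}})$ and record which coordinates vanish. For $O_{(c,e)}$ with $\Im\tau_{11}\to\infty$ and $\tau$ fixed, only one coordinate vanishes: this is a corank $1$ point, the surviving direction retains the elliptic curve $E_\tau$, and the central fibre should normalise to a $\mP^1$-bundle over $E_\tau$, giving the first pair. For $O_{(c,e)}$ with both imaginary parts tending to infinity one lands on $\mP_\infty^1$, and I expect the induced Delaunay-type decomposition to consist of a single parallelogram per $\mathbb{Y}$-period, so that the normalisation is $\mP^1\times\mP^1$, giving the second and third pairs. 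For $O_\infty$ the factor $\tfrac{1}{m-1}$ appearing in \eqref{Oinfty} skews the limiting form along a diagonal, and I expect the induced paving to consist of two triangles and one hexagon per period, producing two copies of $\mP^2$ together with one blow-up of $\mP^2$ at three points, the fourth pair.

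For the degenerate elliptic curve I would take the closure inside the central fibre of the image of $O_i\times L_i$, where $L_{(c,e)}=\{U=1\}$ and $L_\infty=\{UV=1\}$ are the rank one subgroups of $(\C^\times)^2$ from Proposition \ref{existenciafamilias}. Such a closure is a cycle of $\mP^1$'s, and the number of its components equals the number of cells of the decomposition that the corresponding affine line crosses, counted modulo the rank one sublattice $\mathbb{Y}\cap\R\langle L_i\rangle$. Carrying out this count should yield $m$, $m-1$ and $2m-1$ in the three situations above, the exponent $m$ entering precisely through the index of this subgroup; the value $m-1=1$ at $m=2$ is exactly a $1$-gon, that is a single nodal $\mP^1$, which is why that subcase is stated on its own.

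The main obstacle will be the $O_\infty$ computation. There I must show that the induced paving is genuinely the two-triangles-plus-hexagon one, and not a finer triangulation or a coarser parallelogram paving, determine the precise gluing of the three toric surfaces, and verify the $(2m-1)$-gon count; the interaction of the skew limiting form with the sublattice defining $L_\infty$ makes both the surface and the gon more delicate than in the rectangular cases. A secondary technical point is that the quotient by $\mathbb{Y}$ generally yields a non-normal surface, so throughout I would keep track of the normalisation rather than of the glued model, and I would verify the collapse of the $(m-1)$-gon to a nodal curve at $m=2$ as a separate check.
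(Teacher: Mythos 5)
Your overall framework (fibres of $\widetilde{P}$ over boundary points as unions of toric surfaces attached to the cells of a periodic paving, normalisation read off cell by cell, and the gon count as the number of cells crossed per period of the rank-one stabilising sublattice) is sound and is essentially the machinery the paper imports from \cite{Hulek}; your treatment of the first pair also agrees with the paper. The genuine gap is in your geography of which family reaches which boundary stratum. In the paper \emph{all three} remaining pairs come from the single family $O_\infty$: the boundary of $X_\infty=\overline{O_\infty}\cap U$ consists of the whole punctured axis $\{(0,0,T_3):T_3\neq 0\}$ together with the origin, and these carry different fibres (Proposition \ref{fibras-tilde}). Along $O_\infty$ with $z$ fixed and $\Im\tau\to\infty$ the rescaled imaginary part of the period matrix tends to the diagonal form $\mathrm{diag}\bigl(1,\tfrac{1}{m-1}\bigr)$ --- the factor $\tfrac{1}{m-1}$ does \emph{not} skew the form along a diagonal, contrary to what you assert --- so the induced paving is the parallelogram one, $\widetilde{P}_T$ is a net of quadrics made of $\mP^1\times\mP^1$'s, and the curve $\{UV=1\}$, whose stabiliser in $\mathbb{Y}$ is generated by $r^{-(m-1)}s$ (Lemma \ref{subgrupoactuando}), i.e.\ the lattice vector $(-(m-1),1)$, crosses exactly $(m-1)+1-1=m-1$ cells per period: this is where the pairs $(\mP^1\times\mP^1,\ \text{nodal curve or }(m-1)\text{-gon})$ actually arise. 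The two-triangles-plus-hexagon paving you describe governs only the fibre over the single deepest point $T=(0,0,0)$, which yields the fourth pair.

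Conversely, your proposed source for the second and third pairs --- degenerations of $O_{(c,e)}$ with both imaginary parts going to infinity --- cannot produce them. There the curve is $\{U=1\}$ and its stabilising sublattice is $\Z(r^{c}s^{m})$, so its direction is parallel to $(c,m)$ and the crossing count in a parallelogram paving is $c+m-\gcd(c,m)$ for $c\geq 1$ (and $m$ for $c=0$); this is always at least $m$ and never $m-1$. Moreover, the paper asserts (proof of Proposition \ref{fibras-tilde}) that the boundary of $X_{(c,e)}$ lies entirely in $\{0\}\times(\C^{\times})^{2}$, so this family contributes only the corank-one pair ($\mP^1$-bundle over an elliptic curve, $m$-gon). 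Carried out as written, your plan would therefore both miss the net-of-quadrics degenerations of $O_\infty$ and look for the $(m-1)$-gon in a family whose combinatorics yields $m$-gons. The counting mechanism you set up is correct, but it must be applied to the direction $(-1,1)$ against the sublattice $\Z\cdot(-(m-1),1)$ in the parallelogram paving over the punctured axis, reserving the triangle-hexagon computation for the origin alone.
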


In other words, every point in the boundary of $\mathcal{E}_{m}^{\ast}$ can be interpreted as a degenerate abelian surface which contains a degenerate elliptic curve. 

We will proceed in several steps. 

\begin{definicion}
Let $(c,e)\in\Z^2$ with $\mathrm{g.c.d}(c,e,m)=1$ and $0\leq c,e\leq m.$

For $i=\infty$ or $i=(c,e)$ as above, denote  
\begin{equation}
\label{defVi}
X_{i} = \overline{O_i}\cap U\subseteq\C^3,
\end{equation}
where $U$ is the interior of the closure of $\Psi(\hs_2)$ as in \eqref{PU} and $O_{i}$ is the open set defined in \eqref{Oce} for $i=(c,e)$ and in \eqref{Oinfty} for $i=\infty.$
\end{definicion}

\begin{definicion}
\label{mapspi}
Define the map $\pi_{i}:X_{i}\to\mathcal{E}_{m}^{\ast}$ as the composition $p_{F_2}^{\ast}\circ q_{F_2},$ where $p_{F_2}^{\ast}$ and $q_{F_2}$ are the maps introduced in equations \eqref{defpF} and \eqref{defqF}, respectively. 
\end{definicion} 

Directly from the definitions and our proof of Theorem \ref{teoremaclausura} we obtain the following:

\begin{prop}
The maps $\pi_{i}:X_{i}\to\mathcal{E}_{m}^{\ast}$ from Definition \ref{mapspi} are finite and the sets $\pi_{i}(X_i)$ cover the border of $\mathcal{E}_{m}^{\ast}$ 
\end{prop}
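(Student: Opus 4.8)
The plan is to prove two separate assertions: that each $\pi_i$ is a finite morphism, and that the images $\pi_i(X_i)$ together cover the boundary $\partial\mathcal{E}_m^* = \mathcal{E}_m^*\setminus\mathcal{E}_m$. I would treat these in turn, since they rely on quite different inputs. The covering statement is essentially bookkeeping against Theorem \ref{teoremaclausura}, whereas the finiteness is where the geometric content sits.

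First I would establish the covering. By Theorem \ref{teoremaclausura} the boundary is $K^0(1)[m]\sqcup\mP^1_\infty$. For the peripheral $\mP^1_\infty$, recall from Lemma \ref{trescuatro} and its corollary that the matrices $T_{z,\tau}$ used to define $O_\infty$ in \eqref{Oinfty} degenerate (as $\Im\tau\to\infty$) to the axis $\{0\}\times\C\times\{0\}$ whose image under $q_{F_2}$ then $p_{F_2}^*$ is exactly $\mP^1_\infty$; hence $\pi_\infty(X_\infty)\supseteq\mP^1_\infty$. For the Kummer part $K^0(1)[m]$, Definition \ref{defKcerounom} writes its points as images of $\left(\tfrac{c\tau+e}{m},\tau\right)$ with $\mathrm{g.c.d}(m,c,e)=1$, and by the irreducibility proposition every such class already occurs for some $(c,e)$ with $0\le c,e\le m$; these are precisely the indices $i=(c,e)$ parametrizing the families $O_{(c,e)}$ in \eqref{Oce}, so each point of $K^0(1)[m]$ lies in the closure $X_{(c,e)}=\overline{O_{(c,e)}}\cap U$ and maps into $\mathcal{E}_m^*$ under $\pi_{(c,e)}$. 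Taking the union over the finitely many admissible $(c,e)$ and $i=\infty$ shows the images cover the border.

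For finiteness, the natural route is to factor $\pi_i = p_{F_2}^*\circ q_{F_2}$ and analyze each factor on $X_i$. The key point is that $X_i=\overline{O_i}\cap U$ is a surface (the closure of a $2$-dimensional $O_i\subseteq\C^3$), and both $\mathcal{E}_m^*$ and the relevant boundary strata are $2$-dimensional, so $\pi_i$ is a dominant map between surfaces; finiteness then reduces to properness together with quasi-finiteness of the fibers. Quasi-finiteness I would get from the fact that $q_{F_2}$ is the quotient by the properly discontinuous action of $P''(F_2)$ (so has finite fibers) and $p_{F_2}^*$ is, on the relevant strata, a further finite identification induced by the $\mathrm{Sp}(4,\Z)$-action, exactly the kind of finite identifications already computed in the irreducibility proposition and in Proposition \ref{existenciafamilias}'s setup. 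Properness follows because $X_i$ is closed in the open $U$ intersected with a compact-fiber toroidal chart and $\mathcal{A}_2^*$ is compact, so the map is proper onto its image.

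The main obstacle I expect is controlling the fibers of $\pi_i$ precisely enough to conclude \emph{finite} rather than merely quasi-finite—i.e.\ ruling out that positive-dimensional subsets of $X_i$ collapse to a point under the combined quotient and gluing. This is delicate near the deepest degeneration points on $X_i$, where (as the remark after the $\varphi(m)+1$ proposition already warns) the intersection of $C_m$ with $\mP^1_\infty$ is highly singular and of high multiplicity. I would handle this by working chart-by-chart in the local toric descriptions $\iota_n$ and $j$ of \eqref{varphi}, checking that the explicit coordinate parametrizations of $O_i$ remain generically injective modulo the finite stabilizer groups, and invoking that a quasi-finite proper morphism of analytic spaces is finite. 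The cleanest formulation is to note that $\pi_i$ is the restriction to a closed analytic subset of a composition of a finite quotient map with a proper gluing map, and that properness plus finite fibers yields finiteness by the analytic analogue of the theorem cited as \cite[Theorem II.4.7]{Dem} used in the earlier lemma.
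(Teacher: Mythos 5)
Your covering argument is fine and is essentially what the paper intends: the paper in fact gives no proof of this proposition at all, asserting that it follows ``directly from the definitions and our proof of Theorem \ref{teoremaclausura}'', and your bookkeeping — boundary points of $X_{(c,e)}$ map onto $K^{0}(1)[m]$, boundary points of $X_{\infty}$ map onto $\mP^{1}_{\infty}$, by Lemmas \ref{tresdos} and \ref{trescuatro} plus the irreducibility proposition — is exactly that. The problems are in your finiteness argument, where both pillars are unsound. First, the claim that $q_{F_2}$ ``has finite fibers'' because $P''(F_2)$ acts properly discontinuously is false: the fibers of a quotient map by a properly discontinuous action are the orbits, and $P''(F_2)$ is an infinite group, so its orbits are infinite discrete sets. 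Finiteness of the fibers of $\pi_i$ is not a formal property of $q_{F_2}$; it is the statement that each $P''(F_2)$-orbit (together with the further gluing identifications of $p_{F_2}^{\ast}$) meets the two-dimensional slice $X_i$ in only finitely many points, and that is precisely what has to be proved, using the explicit parametrizations of the $O_i$ — i.e.\ the computations behind Theorem \ref{teoremaclausura}. As written, your argument assumes what it should prove.

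Second, the properness step fails, and not merely for lack of justification: being closed in the open set $U$ says nothing about properness over $\mathcal{E}_m^{\ast}$, and in fact $\pi_i$ cannot be proper. Take $i=(c,e)$ with $0<c<m$ (e.g.\ $(c,e)=(1,0)$) and matrices in $O_{(c,e)}$ with $\Im\mu$ slightly larger than $c^{2}\Im\tau/m^{2}$ and $\Im\tau\to\infty$; the first coordinate of $\Psi$ has modulus $e^{-2\pi\left(\Im\mu-\frac{c}{m}\Im\tau\right)}\to\infty$, so $X_{(c,e)}$ is an unbounded, non-compact subset of $\C^{3}$. Since a proper map is closed, a proper $\pi_i$ would have compact image inside the compact space $\mathcal{E}_m^{\ast}$, forcing $X_i=\pi_i^{-1}\left(\pi_i(X_i)\right)$ to be compact — a contradiction. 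So ``finite'' in this proposition can only sensibly be read as quasi-finite (finite fibers), which is what the explicit fiber computations give, and your route ``quasi-finite $+$ proper $\Rightarrow$ finite'' cannot be carried out. (A minor point: \cite[Theorem II.4.7]{Dem} concerns local finiteness of the irreducible components of an analytic set — it is what the paper uses in the lemma on closures — not a properness-plus-finite-fibers criterion.)
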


\begin{definicion}
We define the analytic space $P_{i}$ as the restriction of $P$ (from Theorem \ref{definicionP}) to $X_{i}.$ That is, take 
$$P_{i} := \left.P\right|_{X_i} = P\times_{U} X_{i}.$$
We write $P_{i}\to X_{i}$ for the natural morphism arising from the universal property of fiber product.
\end{definicion}

From \cite[Part II, Theorem 3.10]{Hulek} we have the following

\begin{prop}
The map $P_{i}\to X_{i}$ defined above is proper, flat and its fibers are the ones stated at Theorem \ref{main}.
\end{prop}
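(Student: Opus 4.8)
The plan is to deduce all three assertions from Theorem \ref{definicionP}, by recognising $P_i\to X_i$ as the base change of the proper flat family $P\to U$ along the inclusion $X_i\hookrightarrow U$. First I would record that $X_i=\overline{O_i}\cap U$ is a closed analytic subset of $U$---the closure $\overline{O_i}$ being taken in $\C^3$, its intersection with the open set $U$ is closed in $U$---so that $\iota:X_i\hookrightarrow U$ is a morphism and $P_i=P\times_U X_i$ is defined. Since both properness and flatness are preserved under arbitrary base change, the projection $P_i\to X_i$ is proper and flat, which gives the first two claims at once.

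For the fibres, I would use that fibres commute with base change: for each $T\in X_i$ the fibre $(P_i)_T$ is canonically isomorphic to the fibre $P_{\iota(T)}$ of $P\to U$. To recognise $P_{\iota(T)}$ as a degenerate abelian surface in the sense of the definition preceding Theorem \ref{main}, I would exhibit $P\to U$ itself as the witnessing family: $U$ is open in $\C^3$ and hence smooth, the family is proper and flat, and the locus of fibres that are abelian surfaces is open (being the smooth locus of a proper flat morphism) and dense (it contains $X(F_2)$, which is dense in $U$ by construction). Thus every fibre of $P\to U$, in particular every $(P_i)_T$, is a degenerate abelian surface.

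It then remains to treat the generic points. If $\pi_i(T)\in\mathcal{E}_m$, then $T$ lies in $O_i$ rather than in the boundary stratum $\overline{O_i}\setminus O_i$, since $\pi_i=p_{F_2}^\ast\circ q_{F_2}$ carries $O_i$ into $\mathcal{E}_m$ while sending the complementary stratum into $\partial\mathcal{E}_m^\ast$, by the closure computations of Section \ref{compactification}. Because $O_i\subseteq X(F_2)$, the fibre over such $T$ is a genuine abelian surface; and because $O_i$ is by construction the image under $\Psi$ of period matrices lying in $\mathbb{H}_2(v)$ for a vector $v$ satisfying \eqref{1} (namely $v_{(c,e)}$ or $v_\infty$ from Lemmas \ref{tresdos} and \ref{trescuatro}), this surface is non-simple, principally polarized, and contains an elliptic curve of exponent $m$. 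This matches exactly the fibres asserted in Theorem \ref{main}.

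Since the cited Theorem \ref{definicionP} supplies properness, flatness, and the abelian-surface fibres outright, the argument is essentially formal and I do not anticipate a serious obstacle. The one point deserving care is the claim in the last paragraph that $\pi_i(T)\in\mathcal{E}_m$ forces $T\in O_i$ and not merely $T\in\overline{O_i}$; verifying this reduces to checking, via the explicit description of $q_{F_2}$ and $p_{F_2}^\ast$ together with Lemmas \ref{tresdos} and \ref{trescuatro}, that the entire boundary stratum $\overline{O_i}\setminus O_i$ is mapped into $\partial\mathcal{E}_m^\ast$.
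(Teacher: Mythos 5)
Your proposal is correct and takes essentially the same approach as the paper: the paper deduces this proposition directly from Theorem \ref{definicionP} (i.e.\ Hulek, Part II, Theorem 3.10), with properness, flatness and the fibre description obtained by restricting (base changing) the family $P\to U$ along $X_i\hookrightarrow U$, exactly as you do. The paper records no further detail, so your added verifications --- closedness of $X_i$ in $U$, compatibility of fibres with base change, exhibiting $P\to U$ itself as the witnessing family for the notion of degenerate abelian surface, and the identification of the fibres over points with $\pi_i(T)\in\mathcal{E}_m$ --- are elaborations of the same argument rather than a different method.
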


Now, motivated by our proof of the Proposition \ref{existenciafamilias}, we consider the following schemes and spaces:

\begin{definicion}
Let 
$$\widetilde{H} = \spec\left(\frac{A[U,U^{-1},V,V^{-1},W,W^{-1}]}{\left<UVW-1,U-1\right>}\right),$$
and
$$\widetilde{H}_{\infty} = \spec\left(\frac{A[U,U^{-1},V,V^{-1},W,W^{-1}]}{\left<UVW-1,W-1\right>}\right),$$
which are subschemes of $\widetilde{G}.$ 
\end{definicion}

\begin{definicion}
We denote by $H_{(c,e)}$ (resp. $H_{\infty}$) the restriction of $\widetilde{H}$ (resp $\widetilde{H}_{\infty}$) to $O_{(c,e)}$ (resp. $O_{\infty}).$ 
\end{definicion}

Using the notation from the previous definition we have the following commutative diagram:
\begin{equation*}
\xymatrix{ \mathcal{K}_{i}\ar[rr]^{[\Psi\times\mathrm{id}]}\ar[dr] & & H_{i}/\mathbb{Y}\ar[ld] \\
 & O_{i} &  }
\end{equation*}
where $i=(c,e)$ or $i=\infty.$

\begin{prop}
\label{podemosextender}
The map $[\Psi\times\mathrm{id}]$ from the above diagram induces isomorphisms
$$\left(\mathcal{K}_{i}\right)_{T}\cong (H_{i})_{T}/\mathbb{Y}$$
for every $T\in O_{i}.$
\end{prop}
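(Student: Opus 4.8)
The plan is to deduce the fibrewise isomorphism by simply \emph{restricting} the isomorphism $\mathcal{S}_\tau\cong\widetilde{G}_{\Psi(\tau)}/\mathbb{Y}$ that was already recorded in the commutative square preceding Theorem~\ref{definicionP}. The essential point is that this isomorphism is induced by the map $\Psi\times\mathrm{id}$, which is the \emph{identity} on the torus factor $(\C^\times)^2$; consequently it suffices to check that the subvariety cutting out $\mathcal{K}_i$ inside the torus factor is carried to the subvariety cutting out $H_i$ inside $\widetilde{G}$.

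First I would make the coordinate dictionary explicit. In the proof of Proposition~\ref{existenciafamilias} the families $\mathcal{K}_i$ were described fibrewise as the images of the subgroups $L_{(c,e)}=\{U=1\}$ and $L_\infty=\{UV=1\}$ of $(\C^\times)^2$, written in the very coordinates $(U,V,W)$, $UVW=1$, of $\widetilde{G}$. Since $\widetilde{H}$ is cut out by $U=1$ and $\widetilde{H}_\infty$ by $W=1$ (equivalently $UV=1$), the fibre $(H_i)_T$ is \emph{literally} the same subset $L_i\subseteq\widetilde{G}_T=(\C^\times)^2$ that defines $(\mathcal{K}_i)_T$; this is precisely why $L_{(c,e)}$ was matched with $\widetilde{H}$ and $L_\infty$ with $\widetilde{H}_\infty$ in the definitions above.

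With this in hand the argument is formal. Writing $q_1\colon(\C^\times)^2\to\mathcal{S}_\tau$ for the quotient by the $\Z^2$-action and $q_2\colon(\C^\times)^2=\widetilde{G}_T\to\widetilde{G}_T/\mathbb{Y}$ for the quotient by $\mathbb{Y}$, the fact that $\Psi\times\mathrm{id}$ descends to the isomorphism $\phi\colon\mathcal{S}_\tau\xrightarrow{\sim}\widetilde{G}_{\Psi(\tau)}/\mathbb{Y}$ means exactly that $\phi\circ q_1=q_2$ under the identity identification of the two tori, i.e.\ the two descending equivalence relations coincide. Hence $\phi\bigl((\mathcal{K}_i)_T\bigr)=\phi\bigl(q_1(L_i)\bigr)=q_2(L_i)=(H_i)_T/\mathbb{Y}$, and because $\phi$ is a biholomorphism and $(\mathcal{K}_i)_T$ is a closed analytic subvariety, its restriction $(\mathcal{K}_i)_T\to(H_i)_T/\mathbb{Y}$ is an isomorphism of analytic spaces. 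As $T\in O_i$ was arbitrary this is the assertion; and since $(\mathcal{K}_i)_T$ is an elliptic curve by Proposition~\ref{existenciafamilias}, so is $(H_i)_T/\mathbb{Y}$, with no separate one-dimensionality check needed.

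The one place where care is genuinely needed---and which I expect to be the main obstacle---is the coordinate bookkeeping of the first two paragraphs: one must track the conventions through $e_2$, $j$ and the exponential map used in Proposition~\ref{existenciafamilias} in order to be certain that the ``$\mathrm{id}$'' appearing in $\Psi\times\mathrm{id}$ really identifies $L_i$ with the fibre of $\widetilde{H}$ (resp.\ $\widetilde{H}_\infty$) \emph{on the nose}, rather than only up to an automorphism of the torus. Once the equations $U=1$ and $W=1$ are seen to agree with the defining equations of $L_{(c,e)}$ and $L_\infty$, there is no further analytic content, since the whole statement is nothing more than the restriction of an already-established isomorphism to a subvariety.
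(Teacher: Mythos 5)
Your proof is correct, and it takes a genuinely more economical route than the paper's. The paper first invokes Lemma \ref{subgrupoactuando} to make sense of $(H_i)_T/\mathbb{Y}$ as an honest quotient: since $\mathbb{Y}$ does not preserve $H_i$, the lemma identifies the cyclic subgroup $\mathbb{Y}_c$ (resp.\ $\mathbb{Y}_\infty$) that does act and that accounts for all identifications; the proof then realizes both sides as explicit quotients of $L_i\cong\C^\times$ by cyclic groups and checks that the two actions agree --- on one side the $\mathbb{Y}_c$-action, on the other the action of the sublattice of $\Lambda_\tau$ presenting $(\mathcal{K}_i)_T$ as a subquotient of $(\C^\times)^2$. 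You bypass all of this: since $[\Psi\times\mathrm{id}]$ is the identity on the torus factor, the fiberwise isomorphism $\phi\colon\mathcal{S}_\tau\xrightarrow{\sim}\widetilde{G}_{\Psi(\tau)}/\mathbb{Y}$ intertwines the two quotient maps $q_1,q_2$, so $\phi(q_1(L_i))=q_2(L_i)$ and the restriction of $\phi$ to the closed subvariety $(\mathcal{K}_i)_T=q_1(L_i)$ is the desired isomorphism onto $(H_i)_T/\mathbb{Y}$, with no need to identify which subgroup effects the identifications. Both arguments rest on the same coordinate bookkeeping that you rightly flag as the crux (and which does check out: under $\Psi=j\circ e_2$ the lattice action on $(w_1,w_2)=(V,U)$ matches multiplication by $s,r$, and the exponentials of $W_{(c,e)}=\langle(1,0)\rangle_\C$ and $W_\infty=\langle(-1,1)\rangle_\C$ are exactly $\{U=1\}$ and $\{UV=1\}=\{W=1\}$, i.e.\ the fibers of $\widetilde{H}$ and $\widetilde{H}_\infty$); the paper simply asserts this correspondence in one line. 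What the paper's longer route buys is the explicit presentation of $(H_i)_T/\mathbb{Y}$ as $\C^\times/\langle q\rangle$ via the generators $r^cs^m$ and $r^{-(m-1)}s$ --- and that explicit cyclic-subgroup description, not the mere existence of the isomorphism, is what gets reused in Proposition \ref{fibras-tilde} to count identifications among the components of the degenerate fibers. So your argument suffices for this proposition, but the lemma you avoid is not dispensable for the rest of the section.
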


To prove this proposition we will first need an important lemma:

\begin{lemma}
\label{subgrupoactuando}
Consider the subgroups of $\mathbb{Y}$ given by $\mathbb{Y}_{c}:= (r^{c}s^{m})$ and $\mathbb{Y}_{\infty} = (r^{-(m-1)}s).$ We have that $\mathbb{Y}_{c}$ (resp. $\mathbb{Y}_{\infty}$) acts on $H_{(c,e)}$ (resp. $H_{\infty}$) and $H_{(c,e)}/\mathbb{Y} = H_{(c,e)}/\mathbb{Y}_{c}$ (resp. $H_{\infty}/\mathbb{Y} = H_{\infty}/\mathbb{Y}_{\infty}$). 
\end{lemma}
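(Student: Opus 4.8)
The plan is to reduce both assertions to a single computation: for a general element of $\mathbb{Y}$, determine the coordinate by which it scales the relevant torus factor along the base $O_i$, and then read off exactly when that coordinate is identically $1$. Since $\mathbb{Y}$ is abelian and generated by $r,s$ (with $t=(rs)^{-1}$), every element can be written as $r^as^b$, and since $\mathbb{Y}$ acts on $\widetilde{G}$ by fiberwise multiplication over $\C^3$, an element $g=r^as^b$ maps the subscheme $H_{(c,e)}=\{U=1\}$ into itself precisely when its first coordinate $g_U=T_2^aT_3^{a-b}$ restricts to the constant function $1$ on $O_{(c,e)}$.

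So the first step is to pull back $T_2,T_3$ along the parametrization $\Psi=j\circ e_2$ of $O_{(c,e)}$ from \eqref{Oce}. A direct substitution gives $g_U=\exp\!\bigl(2\pi i\bigl(\tfrac{am-bc}{m}\tau-\tfrac{be}{m}\bigr)\bigr)$ on $O_{(c,e)}$, so $g$ stabilizes $H_{(c,e)}$ if and only if $am=bc$ and $m\mid be$. The heart of the argument is then the number theory: I would show that, thanks to the primitivity hypothesis $\gcd(c,e,m)=1$, the solution set of $am=bc,\ m\mid be$ is exactly $\Z\cdot(c,m)$, i.e. $\mathbb{Y}_c=\langle r^cs^m\rangle$. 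Writing $d=\gcd(c,m)$, $c=dc'$, $m=dm'$ with $\gcd(c',m')=1$, the equation $am=bc$ forces $(a,b)=\ell(c',m')$, and the congruence $m\mid be$ becomes $d\mid\ell e$; since $\gcd(d,e)=\gcd(c,m,e)=1$ this is equivalent to $d\mid\ell$, which returns exactly $\mathbb{Y}_c$. This simultaneously shows that $\mathbb{Y}_c$ acts on $H_{(c,e)}$ and that it is the \emph{full} stabilizer of $H_{(c,e)}$ in $\mathbb{Y}$. The equality $H_{(c,e)}/\mathbb{Y}=H_{(c,e)}/\mathbb{Y}_c$ then follows: two points of $H_{(c,e)}$ lying over the same base point (points over different base points cannot be $\mathbb{Y}$-equivalent, as $\mathbb{Y}$ fixes the base) can be $\mathbb{Y}$-equivalent only through an element fixing $\{U=1\}$, i.e. through $\mathbb{Y}_c$, so the equivalence relation induced by the ambient $\mathbb{Y}$-action restricts on $H_{(c,e)}$ to the $\mathbb{Y}_c$-action.

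For the family over $O_\infty$ the same scheme applies with $W$ in place of $U$: now $g=r^as^b$ preserves $H_\infty=\{W=1\}$ if and only if $g_W=T_2^{-a}T_1^{-b}$ is identically $1$ on $O_\infty$. Substituting the parametrization from \eqref{Oinfty} and simplifying (here $T_1=e^{2\pi i(\tau+z)}$ and $T_2=e^{2\pi i(\tau+z)/(m-1)}$) yields $g_W=\exp\!\bigl(-2\pi i(\tau+z)\bigl(\tfrac{a}{m-1}+b\bigr)\bigr)$, which is identically $1$ if and only if $a=-(m-1)b$, i.e. $g\in\mathbb{Y}_\infty=\langle r^{-(m-1)}s\rangle$; the quotient equality follows exactly as before. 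This case is cleaner because no divisibility condition intervenes.

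The main obstacle I anticipate is the number-theoretic step ensuring that the stabilizer is exactly $\mathbb{Y}_c$ and no larger: the congruence $m\mid be$ is what could a priori enlarge it, and it is precisely the primitivity $\gcd(c,e,m)=1$ that prevents this. A secondary technical point is checking that the passage from the generic-fiber stabilizer to the equality of quotients is valid over the whole base (special fibers where $g_U(T)=1$ for some $g\notin\mathbb{Y}_c$ form a proper closed subset), which I would handle using the proper discontinuity of the $\mathbb{Y}$-action from Theorem \ref{definicionP}.
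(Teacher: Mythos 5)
Your computation is essentially the paper's own proof. The paper records the action formulas $S_{ar+bs}(U)=T_2^{-a}T_3^{b-a}U$ and $S_{ar+bs}(W)=T_2^{-a}T_1^{-b}W$ together with the relations $T_2^{c}=T_3^{m-c}$ on $O_{(c,e)}$ and $T_1=T_2^{m-1}$ on $O_\infty$, and then calls the rest ``an easy computation''; your exponential formulas are exactly these relations after substituting the parametrizations \eqref{Oce} and \eqref{Oinfty}, and your gcd argument (that $\gcd(c,e,m)=1$ forces the solution set of $am=bc$, $m\mid be$ to be $\Z\cdot(c,m)$) is a correct and worthwhile expansion of the step the paper leaves implicit --- it is precisely where primitivity enters.

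The one step that would fail as written is your last one. Because the $\mathbb{Y}$-action is fiberwise, two points in the fiber of $H_{(c,e)}$ over $T$ are $\mathbb{Y}$-equivalent as soon as $g_U(T)=1$ at that \emph{single} point $T$; so the equality $H_{(c,e)}/\mathbb{Y}=H_{(c,e)}/\mathbb{Y}_c$ requires controlling the pointwise stabilizer over each $T$, not just the stabilizer of the subscheme, and proper discontinuity cannot supply this: a properly discontinuous action can perfectly well create extra identifications over special points (this is exactly what happens at fixed points). Fortunately the issue is vacuous, but for a different reason than the one you propose. From $g_U(T)=\exp\bigl(2\pi i\bigl(\tfrac{am-bc}{m}\tau-\tfrac{be}{m}\bigr)\bigr)$, the condition $g_U(T)=1$ at a single $T$ forces the exponent to be a real integer, and since $\Im\tau>0$ this already gives $am-bc=0$ and then $m\mid be$. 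Similarly, positive definiteness of the imaginary part of $\begin{pmatrix}\tau&z\\z&\frac{1}{m-1}\left(\tau-(m-2)z\right)\end{pmatrix}$ forces $\Im(\tau+z)>0$ on $O_\infty$, so $g_W(T)=1$ at one point already gives $a+(m-1)b=0$. Hence the pointwise and generic stabilizers coincide over \emph{every} point of the base, there are no special fibers at all, and your proof goes through once the appeal to proper discontinuity is replaced by this observation.
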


\begin{proof}
From the definition of the action of $\mathbb{Y}$ on $\widetilde{P}$ given in \cite[Part II, Definition 1.9]{Hulek} we have that 
$$S_{ar+bs}(U) = T_{2}^{-a}T_{3}^{b-a}U$$
and 
$$S_{ar+bs}(W) = T_{2}^{-a}T_{1}^{-b}W.$$
On the other hand, it is easy to see that 
\begin{equation}
\label{contencionesOic}
O_{(c,e)}\subseteq\{(T_{1},T_{2},T_{3})\in(\C^{\times})^{3} : T_{2}^{c} = T_{3}^{m-c}\} 
\end{equation}
and
\begin{equation}
\label{contencionOinfty}
O_{\infty}\subseteq \{(T_{1},T_{2},T_{3})\in(\C^{\times})^{3} : T_{1} = T_{2}^{m-1}\}, 
\end{equation}
thus an easy computation finishes the proof.
\end{proof}

\begin{myproofprop}
From the definition of the map $[\Psi\times\mathrm{id}]$ we have that $(H_{i})_{T}$ corresponds to the sets $L_{(c,e)}$ and $L_{\infty}$ from the proof of Proposition \ref{existenciafamilias}, which are both isomorphic to $\C^{\times}$. On the other hand, using the previous lemma it is easy to see that the action agrees with the one that defines the corresponding elliptic curve as a subquotient of $(\C^{\times})^{2}.$ 

\end{myproofprop}

Recall that we want to extend the families $\mathcal{K}_{i}\to O_{i}$ defined in Proposition \ref{existenciafamilias} to families defined over the sets $X_{i}$ defined in \ref{defVi} From the above proposition, we can do that using the scheme $\widetilde{P}$ from Mumford's construction.

\begin{definicion}
Let $\widetilde{Q}$ (resp $\widetilde{Q}_{\infty}$) be the closure of $\widetilde{H}$ (resp. $\widetilde{H}_{\infty}$) in $\widetilde{P}.$ 
\end{definicion}

We then have a morphism
$$\widetilde{Q}\to \spec\hspace{0.1cm}A\cong\C^{3}$$
(resp. $\widetilde{Q}_{\infty}\to\spec\hspace{0.1cm}A$) given by the composition of the inclusion $\widetilde{Q}\to\widetilde{P}$ (resp. $\widetilde{Q}_{\infty}\to\widetilde{P}$) with the natural map $\widetilde{P}\to\spec\hspace{0.1cm}A.$

\begin{definicion}
We define $Q_{(c,e)}$ (resp. $Q_{\infty}$) as the image in 
$$P_{i} = P\times_{U} X_{i} = \left[\left(\widetilde{P}\times_{\C^3} U\right)\times_{U}\times X_{i}\right]/\mathbb{Y}$$ of the restriction of $\widetilde{Q}$ (resp. $\widetilde{Q}_{\infty}$) to $X_{(c,e)}$ (resp. $X_{\infty}$). 
\end{definicion}

\begin{definicion}
\label{defQi}
We write $Q_{i}\to X_{i}$ and  $Q_{i}\to P_{i}$ for the natural maps arising from the definition of $Q_{i}.$ 
\end{definicion}

We have that the maps from the previous definition fit in the following commutative diagram
\begin{equation*}
    \xymatrix{H_{i}/\mathbb{Y} \ar[r]\ar[d] & Q_{i} \ar[r]\ar[d] & P_{i}\ar[d] \\
    O_{i}\ar@{^{(}->}[r] & X_{i} \ar@{=}[r] & X_{i} }
\end{equation*}

From Proposition \ref{podemosextender} it is then clear that the morphism $Q_{i}\to X_{i}$ extends the family $\mathcal{K}_{i}\to O_{i}$ from Proposition \ref{existenciafamilias}. We claim that this morphism is actually proper and flat. To prove this we will need to describe the fibers of this map. For this we only need to describe the fiber of the map $\widetilde{Q}\to\spec\hspace{0.1cm}A\cong\C^3$ (resp. $\widetilde{Q}_{\infty}\to\spec\hspace{0.1cm}A$) over a point $T\in X_{(c,e)}$ (resp. $T\in X_{\infty}$) and see which points are identified under the action of $\mathbb{Y}.$

\begin{lemma}
\label{lemaproj}
We have that 
$$\widetilde{Q} = \proj\left(R/I\right)$$
and 
$$\widetilde{Q}_{\infty} = \proj\left(R/I_{\infty}\right),$$
where $I$ (resp. $I_{\infty}$) is the homogenization of the ideal that defines $\widetilde{H}$ (resp. $\widetilde{H}_{\infty}$).
\end{lemma}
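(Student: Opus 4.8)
```latex
The plan is to compute the homogeneous coordinate rings of $\widetilde{Q}$ and $\widetilde{Q}_\infty$ directly from the fact that they are defined as scheme-theoretic closures inside $\widetilde{P}=\proj R$. Recall that $\widetilde{G}$ is a dense open subscheme of $\widetilde{P}$, and $\widetilde{H}$ (resp.\ $\widetilde{H}_\infty$) is the closed subscheme of $\widetilde{G}$ cut out by the additional relation $U-1$ (resp.\ $W-1$) on top of the torus relation $UVW-1$. The general principle I would invoke is the standard fact from projective algebraic geometry that the scheme-theoretic closure of a locally closed subscheme $V(J)\cap D_+(f)\subseteq\proj R$ is $\proj(R/\widetilde{J})$, where $\widetilde{J}$ is the \emph{saturation with respect to the irrelevant ideal} of the homogenization of $J$. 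So the first thing I would verify is exactly which homogenization is meant: the statement says ``the homogenization of the ideal that defines $\widetilde H$,'' and I would make precise that this means homogenizing the defining equations of $\widetilde{H}$ (as a subscheme of the affine chart $\widetilde{G}$) with respect to the grading on $R$ coming from Mumford's construction.

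First I would recall the explicit presentation of $R=R_{\Phi,\Sigma}$ from \cite[Part II, Definition 1.10]{Hulek} and identify the degree-zero part and the homogeneous generators, so that the localization of $R$ recovering the open chart $\widetilde{G}$ is transparent; concretely, $\widetilde{G}=D_+(\theta)$ for an appropriate degree-one generator $\theta$, and the coordinate ring of $\widetilde{G}$ is the degree-zero part of the localization $R_\theta$. Second, I would translate the affine relations $U-1$ and $W-1$ into homogeneous elements $I$ and $I_\infty$ of $R$ via this identification, producing the ideals whose homogenization is claimed. Third, I would argue that $R/I$ and $R/I_\infty$ are already saturated, so that no further saturation step is needed and $\proj(R/I)$ genuinely equals the closure $\widetilde{Q}$; this is where one checks that homogenizing these particular linear-in-$U$ (resp.\ linear-in-$W$) relations does not introduce spurious associated primes supported on the irrelevant ideal. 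Finally, I would confirm that $\widetilde{H}=\widetilde{Q}\cap\widetilde{G}$ scheme-theoretically, i.e.\ that restricting $\proj(R/I)$ back to the chart $D_+(\theta)$ recovers $\widetilde{H}$, which closes the loop and shows $\proj(R/I)$ is the correct closure.

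The main obstacle I expect is the saturation issue: a priori the scheme-theoretic closure of $V(I)\cap\widetilde{G}$ in $\proj R$ is $\proj$ of the \emph{saturated} homogenization, and the naive homogenized ideal can differ from its saturation by components concentrated at the irrelevant locus. The delicate step is therefore to show that for these specific defining relations the homogenization is automatically saturated (or that its saturation defines the same $\proj$). Because $U-1$ and $W-1$ are each linear in one of the torus variables, I anticipate that homogenization behaves well and no extra primary components at the irrelevant ideal appear, but making this rigorous requires understanding the structure of $R$ well enough to rule out embedded or irrelevant associated primes. A clean way to sidestep a full saturation computation would be to check equality on each affine chart $D_+(f)$ of the toric cover $\Sigma$: on each chart both sides are explicit affine schemes, and verifying that the homogenized ideal localizes to the right affine ideal on every chart simultaneously proves $\proj(R/I)=\widetilde{Q}$ without ever computing the global saturation. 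I would carry out that chart-by-chart verification as the technical heart of the argument.
```
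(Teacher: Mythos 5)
Your plan ends up in the right place, but it travels a different road from the paper, and one of its stated foundations is wrong. The paper's own proof is a two-line soft argument: it observes that $\widetilde{H}$ (resp.\ $\widetilde{H}_\infty$) is obviously an open subscheme of $\proj(R/I)$ (resp.\ $\proj(R/I_\infty)$), so that the closure $\widetilde{Q}$ is contained in this closed subscheme of $\widetilde{P}$, and then concludes from the fact that $I$ and $I_{\infty}$ are \emph{radical} that the reduced scheme $\proj(R/I)$ coincides with the (reduced) closure $\widetilde{Q}$. Radicality is the one substantive input of the paper's argument and it appears nowhere in your proposal; conversely, you take seriously the density question --- whether $\proj(R/I)$ could have components hiding in the boundary $\widetilde{P}\setminus\widetilde{G}$ --- which the paper's ``it is obvious'' glosses over. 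Your concluding chart-by-chart verification on the toric charts of $\widetilde{P}$ is a legitimate, more computational way to prove the lemma, and if executed it would settle both points at once.

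However, the ``standard fact'' you invoke, and the ``delicate step'' you build around it, are misstated: saturation with respect to the \emph{irrelevant} ideal is a red herring. $\proj$ is insensitive to it --- $\proj(R/I)$ and $\proj(R/I^{\mathrm{sat}})$ define the same closed subscheme of $\proj R$ --- so proving that $R/I$ is irrelevant-saturated accomplishes nothing, and failing to prove it costs nothing. The genuine danger is different: homogenizing only a \emph{generating set} of the affine ideal can create extra honest projective components supported on $\widetilde{P}\setminus\widetilde{G}$, and these are not removed by irrelevant saturation. (Classical example: homogenizing the generators $y-x^{2}$, $z-x^{3}$ of the affine twisted cubic gives $yw-x^{2}$, $zw^{2}-x^{3}$ in $\mathbb{P}^{3}$, whose zero locus is the twisted cubic plus the extra line $x=w=0$.) What one needs is saturation with respect to the element(s) cutting out $\widetilde{P}\setminus\widetilde{G}$, or equivalently the remark that homogenizing the \emph{full} ideal --- which is what the lemma means --- produces an ideal on which that element is a nonzerodivisor, hence with no associated primes on the boundary. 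Your chart-by-chart check bypasses this confusion in practice; when you carry it out, also record that $\widetilde{H}$ and $\widetilde{H}_{\infty}$ are reduced (they are subtori of the torus fibers of $\widetilde{G}$ over $\spec A$), so that the ideal comparison on each chart yields scheme-theoretic and not merely set-theoretic equality; this is precisely the role that radicality of $I$ and $I_{\infty}$ plays in the paper's proof.
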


\begin{proof}
It is obvious that $\widetilde{Q}$ (resp. $\widetilde{Q}_{\infty}$) is an open subset of $\proj(R/I)$ (resp $\proj(R/I_{\infty}).$  Now, as both $I$ and $I_{\infty}$ are radical in $R,$ the result follows.

\end{proof}

Using the above, our problem becomes merely algebraic. 

\begin{prop}
\label{fibras-tilde}
Let $i=(c,e)$ or $i=\infty$ and let $\widetilde{Q},\widetilde{Q}_{\infty}$ and $X_{i}$ be as in Lemma \ref{lemaproj} and equation \eqref{defVi}, respectively. 
\begin{enumerate}
    \item Consider the groups $\mathbb{Y}_{c}$ and $\mathbb{Y}_{\infty},$ subgroups of $\mathbb{Y}$ defined in Lemma \ref{subgrupoactuando}. We have that for $T\in X_{(c,e)}$ (resp. $T\in X_{\infty})$ the only identifications in $\left(\widetilde{Q}\right)_{T}$ (resp. $\left(\widetilde{Q}_{\infty}\right)_T$) which arise from the action of $\mathbb{Y}$ come from $\mathbb{Y}_{c}$ (resp. $\mathbb{Y}_{\infty}$). 
    \item We have that the fiber of the map $\widetilde{Q}\to \spec\hspace{0.1cm}A$ (resp $\widetilde{Q}_{\infty}\to\spec\hspace{0.1cm}A$) over a point $T\in X_{(c,e)}$ (resp $T\in X_{\infty}$) is a chain of $\mP^{1}$'s.
\end{enumerate}
\end{prop}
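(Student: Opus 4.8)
\textbf{The plan} is to reduce the entire proposition to a concrete computation inside the graded algebra $R=R_{\Phi,\Sigma}$ and its toric structure, using Lemma \ref{lemaproj} to present $\widetilde{Q}$ and $\widetilde{Q}_\infty$ as $\proj$ of explicit quotient rings. The two parts of the statement are of a different nature: part (1) is a group-theoretic identification statement about which elements of $\mathbb{Y}$ act nontrivially on a fixed fiber, and part (2) is a geometric statement about the fiber being a chain of $\mP^1$s. I would treat them in the order they are stated, since part (1) feeds into part (2): once we know the only identifications come from the cyclic group $\mathbb{Y}_c$ (resp. $\mathbb{Y}_\infty$), the fiber of $\widetilde{Q}\to\spec A$ over $T$ is a chain of rational curves and we only need to understand how $\mathbb{Y}_c$ glues the ends of that chain.

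For part (1), I would start from the formulas for the action already recorded in the proof of Lemma \ref{subgrupoactuando}, namely $S_{ar+bs}(U)=T_2^{-a}T_3^{b-a}U$ and $S_{ar+bs}(W)=T_2^{-a}T_1^{-b}W$, and combine them with the containments \eqref{contencionesOic} and \eqref{contencionOinfty}. The key observation is that $\widetilde{H}$ is cut out by $U=1$, so an element $r^a s^b\in\mathbb{Y}$ can possibly preserve the fiber $(\widetilde{Q})_T$ only if its action is compatible with this equation; using $T_2^c=T_3^{m-c}$ on $O_{(c,e)}$ one checks that $S_{ar+bs}$ stabilizes the slice precisely when $(a,b)$ is an integer multiple of $(c,m)$, i.e. lies in $\mathbb{Y}_c$. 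The analogous computation on $O_\infty$ using $T_1=T_2^{m-1}$ singles out $\mathbb{Y}_\infty$. Since these relations persist on the closure $X_i=\overline{O_i}\cap U$, the same identifications hold over every $T\in X_i$, which is what part (1) asserts.

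For part (2), the plan is to compute the fiber of $\widetilde{Q}\to\spec A$ over a boundary point $T$ directly from the $\proj(R/I)$ description. The algebra $R$ is built from the fan $\Sigma$ (the Legendre decomposition), and its homogeneous pieces are indexed by the cones of $\Sigma$; restricting to the subscheme $\widetilde{H}$ given by $U=1$ and then taking $\proj$ of the homogenized ideal $R/I$ amounts to looking at a one-parameter torus orbit together with the toric boundary strata it limits into. Over a generic (interior) point the fiber is $\C^\times$, as established in Proposition \ref{podemosextender}; over a boundary point $T$ the extra strata from $\Sigma$ attach a finite chain of $\mP^1$s, one rational component for each relevant cone, meeting in nodes. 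I would make this explicit by writing down the relevant affine charts $T_n\cong\C^3$ from the local description of the toroidal embedding (the maps $\iota_n$ from Section \ref{preliminaries}) and tracking the image of $\widetilde{H}$ in each chart, so that the chain structure is read off directly.

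\textbf{The main obstacle} I anticipate is part (2), specifically getting the combinatorics of the chain exactly right: one must identify which cones of the Legendre fan $\Sigma$ meet the closure of the one-dimensional orbit corresponding to $\widetilde{H}$, and verify that consecutive components meet transversally in a single point so that the result is genuinely a chain (and not, say, a cycle or a more degenerate configuration) before the action of $\mathbb{Y}_c$ is imposed. This requires careful bookkeeping with the explicit charts $\iota_n$ and the degrees appearing in $R_{\Phi,\Sigma}$; the group-theoretic part (1), by contrast, is a short and essentially mechanical verification once the action formulas and the defining relations of $O_i$ are in hand. I expect the length of the fiber chain (the number of $\mP^1$s) to emerge naturally from this count, which is exactly the data that will later be used to distinguish the $m$-gon, $(m-1)$-gon and $(2m-1)$-gon cases in Theorem \ref{main2}.
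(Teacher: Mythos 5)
Your treatment of part (1) is essentially the paper's own argument: one finds the elements of $\mathbb{Y}$ that preserve the defining equation of $I$ (resp.\ $I_{\infty}$) using the action formulas of Lemma \ref{subgrupoactuando} together with the containments \eqref{contencionesOic} and \eqref{contencionOinfty}, and your computation singling out the integer multiples of $(c,m)$ (resp.\ the powers of $r^{-(m-1)}s$) is exactly that verification.

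Part (2), however, has a genuine gap. The charts $\iota_{n}$ you propose to use belong to the toroidal embedding $(\C^{\times})^{3}\subseteq T_{\Sigma(F_2)}$ of the \emph{base}: they compactify $X(F_2)$ inside $\mathcal{A}_{2}^{\ast}$ and say nothing about the fibers of Mumford's family $\widetilde{P}=\proj R_{\Phi,\Sigma}\to\spec A$. The structure of $\widetilde{P}_{T}$ over boundary points is governed by the grading of $R_{\Phi,\Sigma}$ and is precisely the content of \cite[Part II, Proposition 2.15]{Hulek}, which is what the paper invokes: for $T\in\{0\}\times(\C^{\times})^{2}$ (where the boundary points of $X_{(c,e)}$ lie) the fiber is a chain of $\C^{\times}\times\mP^{1}$'s with components indexed by $\Z$; for $T=(0,0,T_{3})$ with $T_{3}\neq 0$ it is a net of quadrics $\mP^{1}\times\mP^{1}$ indexed by $\Z^{2}$; and for $T=0$ it is a union of copies of $\mP^{2}$ and blow-ups of $\mP^{2}$ at three points. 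Your plan treats all boundary points uniformly and would miss this trichotomy, which is essential for $i=\infty$, where the paper must handle $T\neq 0$ and $T=0$ separately with different local models before intersecting with $\widetilde{Q}_{\infty}$. Relatedly, your expectation that the fiber of $\widetilde{Q}\to\spec A$ is a \emph{finite} chain whose length yields the $m$-gon, $(m-1)$-gon and $(2m-1)$-gon counts is incorrect: since the components of $\widetilde{P}_{T}$ are indexed by $\Z$ (resp.\ $\Z^{2}$), the fiber $(\widetilde{Q})_{T}$ is an \emph{infinite} chain of $\mP^{1}$'s, and the polygons together with their lengths appear only after quotienting by $\mathbb{Y}_{c}$ (resp.\ $\mathbb{Y}_{\infty}$), i.e.\ they are determined by how the generator $r^{c}s^{m}$ (resp.\ $r^{-(m-1)}s$) translates the chain components; that is the content of the proposition that follows this one, not of the fiber computation here. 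Your opening paragraph actually has this logic right ("we only need to understand how $\mathbb{Y}_{c}$ glues the ends of that chain"), but the later paragraphs contradict it, and the finiteness claim is the step that would fail.
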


\begin{proof}
\begin{enumerate}[leftmargin = 0.4cm]
    \item According to Lemma \ref{lemaproj} it is enough to find the elements of $\mathbb{Y}$ which preserve the equation that defines the ideal $I$ (resp $I_{\infty}$) given the condition that $T\in X_{(c,e)}$ (resp $T\in X_{\infty}$). These are all easy calculations recalling the contentions \eqref{contencionesOic} and \eqref{contencionOinfty}.
    \item Recall that for every $T$ we have that $\widetilde{Q}_{T}$ (resp $\left(\widetilde{Q}_{\infty}\right)_{T}$) is contained in $\widetilde{P}_{T}.$ For $i=(c,e)$ we have that the boundary points of $X_i$ lie in $\{0\}\times(\C^{\times})^{2}$ and according to \cite[Part II, Proposition 2.15]{Hulek} for those kinds of points the fiber $\widetilde{P}_{T}$ is a chain of $\C^{\times}\times\mP^{1}$'s. That is, over those points the fiber $\widetilde{P}_{T}$ has countably many irreducible components $Z_{k},k\in\Z,$ each one isomorphic to $\C^{\times}\times\mP^{1}$ and for $k\neq l$ these components satisfy that 
    $$Z_{k}\cap Z_{l} \cong \begin{cases}
                             \C^{\times} & \text{if $|k-l|=1$} \\
                             \emptyset & \text{in other case}
                             \end{cases},$$
    where $(u,[0:1])\in Z_{k}$ is identified with $(u,[1:0])\in Z_{k+1}.$ Moreover, each of those components has an explicit algebraic description (which can be found in the proof of \cite[Part II, Prop. 2.15]{Hulek}) which can be worked out. Using that explicit description and Lemma \ref{lemaproj} it is easy to see that
    $$\widetilde{Q}\cap Z_{k}\cong\mP^1$$
    and for $k\neq l$ we have that 
    $$\widetilde{Q}\cap Z_{k}\cap Z_{l}\cong\begin{cases}
    \{pt\} & \text{if $|k-l|=1$} \\
    \emptyset & \text{in any other case} 
    \end{cases},$$
    that is, $\widetilde{Q}_{T}$ is a chain of $\mP^{1}$'s, as we wanted to see.
    
    Now, for $i=\infty$ we have to distinguish between the cases $T\neq 0$ and $T=0.$ For $T\neq 0$ $\widetilde{P}_{T}$ is a net of quadrics, that is, it has countable many irreducible components $Z_{a,b},$ $a,b\in\Z,$ each one isomorphic to $\mP^{1}\times\mP^{1}$ with the intersections suggested by the terminology. This fiber can be visualized as in the figure \ref{P-tildeT} below, where each rectangle represents a copy of $\mP^{1}\times\mP^{1}$.  On the other hand, for $T=0$ the fiber $\widetilde{P}_{T}$ is illustrated by the figure \ref{P-tilde0}, where each triangle is a copy of $\mP^{2}$ and each hexagon represents the blow-up of $\mP^{2}$ at three general points. In both cases, using local coordinates and the action of the corresponding group, it can be worked out that the fiber is a chain of $\mP^{1}$'s and can be visualized as in Figures \ref{Qinfinitonocero} and \ref{Qinfinitocero}. 
    
\begin{figure}[h!]
\centering
\includegraphics[width=0.7\linewidth]{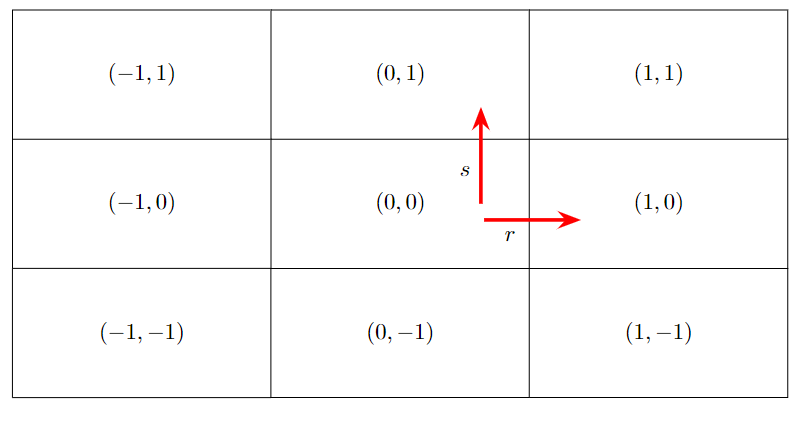}
\caption{Illustration of $\widetilde{P}_{T}$ for $T=(0,0,T_{3})$ with $T_{3}\neq 0$}
\label{P-tildeT}
\end{figure}

\begin{figure}[h!]
\centering
\includegraphics[width=0.7\linewidth]{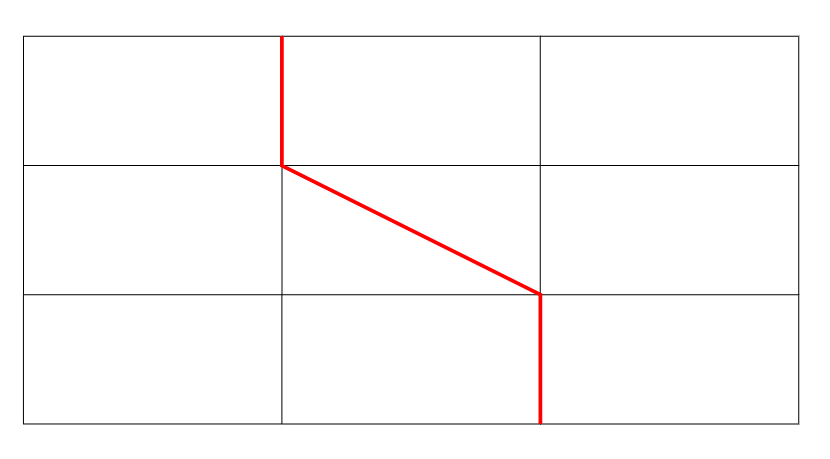}
\caption{Illustration of $\widetilde{Q}_{\infty}$ over $T=(0,0,T_{3})$ for $m=3$ and $T\neq 0$}
\label{Qinfinitonocero}
\end{figure}

    \begin{figure}[h]
\centering
\includegraphics[width=0.7\linewidth]{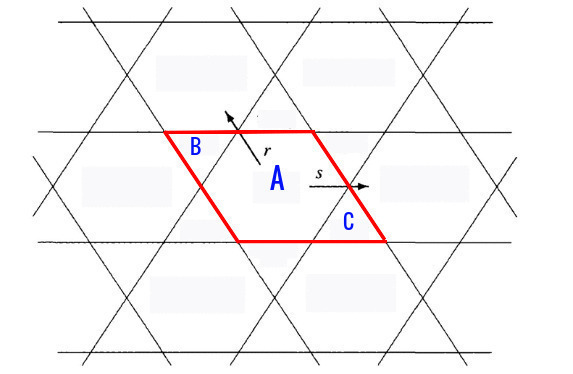}
\caption{Illustration of $\widetilde{P}_{0}.$ The whole fiber is $\mathbb{Y}$-translated from the parallelogram bounded in red and A,B,C are the irreducible components of the fundamental region}
\label{P-tilde0}
\end{figure}

    \begin{figure}[h!]
\centering
\includegraphics[width=0.7\linewidth]{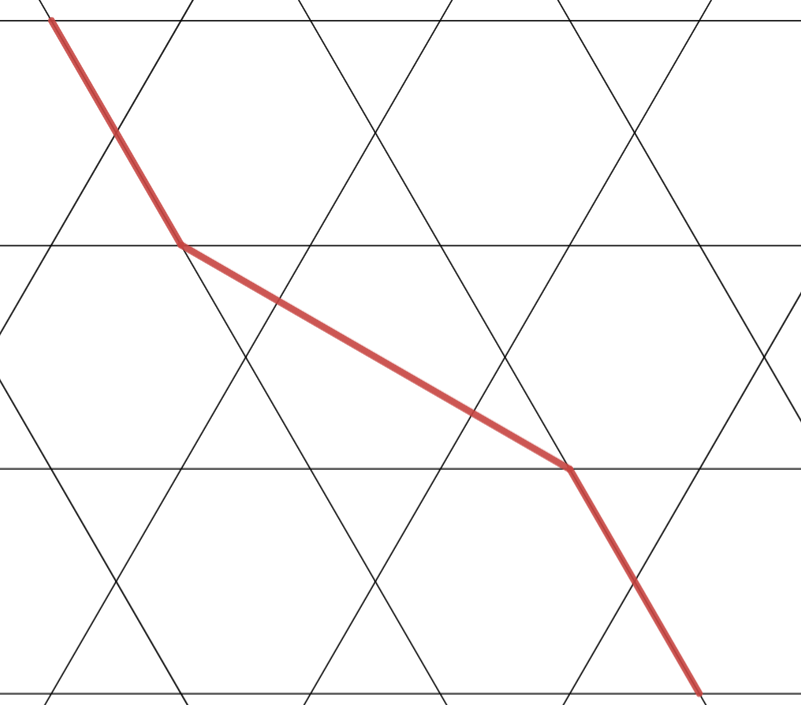}
\caption{Illustration of $\widetilde{Q}_{\infty}$ over $T=(0,0,0)$ for $m=3$}
\label{Qinfinitocero}
\end{figure}

\end{enumerate}
\end{proof}

Now, using the  previous result we can get the description of the fibers and show that those fibers are actually degenerate elliptic curves. 

 \begin{prop}
Let $m\geq 2$ be a fixed positive integer number and for $i=(c,e) $ or $i=\infty$ consider the morphisms $Q_{i}\to X_{i}$ defined in Definition \ref{defQi}. We have that 
\begin{enumerate}
    \item For $T\in(\C^{\times})^{3}$ the fiber of the map $Q_{i}\to X_{i}$ over $T$ is an elliptic curve
    \item For $T\notin(\C^{\times})^3$ the fiber of the map $Q_{(c,e)}\to X_{(c,e)}$ over $T$ is a $m$-gon of $\mP^{1}$'s
    \item For $T\notin(\C^{\times})^{3}$ the fiber of the map $Q_{\infty}\to X_{\infty}$ over $T$ is either 
    \begin{itemize}
        \item a nodal curve if $m=2$ and $T\neq 0$
        \item a $(m-1)$-gon of $\mP^{1}$'s if $m\geq 3$ and $T\neq 0$
        \item a $(2m-1)$-gon of $\mP^{1}$'s if $T=0$
    \end{itemize}
    \item The morphisms $Q_{i}\to X_{i}$ are proper and flat. In particular, each of the fibers above can be called a degenerate elliptic curve
\end{enumerate}
\end{prop}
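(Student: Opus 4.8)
The plan is to establish the four assertions one at a time, in each case pulling the statement back to the schemes $\widetilde Q$ and $\widetilde Q_\infty$, whose fibres were already analysed, and then passing to the quotient by $\mathbb Y$. Assertion (1) requires no new work: over the open locus $O_i=X_i\cap(\C^\times)^3$ the morphism $Q_i\to X_i$ is by construction the family $\mathcal K_i\to O_i$ of Proposition \ref{existenciafamilias}, and Proposition \ref{podemosextender} already identifies each such fibre $(\mathcal K_i)_T\cong(H_i)_T/\mathbb Y$ with an elliptic curve. Thus I would only remark that a point of $X_i$ with all coordinates nonzero lies in $O_i$, so its fibre is an elliptic curve.

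For (2) and (3) the engine is Proposition \ref{fibras-tilde}. For $T\notin(\C^\times)^3$ it tells me that $(\widetilde Q)_T$ (resp. $(\widetilde Q_\infty)_T$) is an infinite chain of $\mP^1$'s indexed by $\Z$, and that the only identifications coming from $\mathbb Y$ are those produced by the rank-one subgroup $\mathbb Y_c=\langle r^c s^m\rangle$ (resp. $\mathbb Y_\infty=\langle r^{-(m-1)}s\rangle$). Hence the fibre of $Q_i\to X_i$ over $T$ is the quotient of this infinite chain by an infinite cyclic group acting by translation, which is a cyclic chain (``gon'') of $\mP^1$'s whose number of components equals the translation length of the generator on the index set $\Z$. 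The crux is to compute that length: using the explicit monomial action
$$S_{ar+bs}(U)=T_2^{-a}T_3^{\,b-a}U,\qquad S_{ar+bs}(W)=T_2^{-a}T_1^{-b}W$$
from Lemma \ref{subgrupoactuando}, together with the containments \eqref{contencionesOic} and \eqref{contencionOinfty}, I would read off how the generator of $\mathbb Y_c$ (resp. $\mathbb Y_\infty$) shifts the label $k$ of the component $\widetilde Q\cap Z_k$. I expect a shift of $m$ in the $(c,e)$ case, giving an $m$-gon; a shift of $m-1$ for $\mathbb Y_\infty$ when $T\neq0$, giving an $(m-1)$-gon for $m\geq3$; and, when $m=2$, a shift of $1$, so that the quotient is a single $\mP^1$ with its two boundary points glued, i.e. a nodal rational curve. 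The case $T=0$ of the $\infty$-family is the delicate one: there $\widetilde P_0$ is the deepest stratum assembled from copies of $\mP^2$ and of the blow-up of $\mP^2$ at three points (Figure \ref{P-tilde0}), the chain $\widetilde Q_\infty$ is correspondingly subdivided, and a direct count in the local coordinates underlying Figure \ref{Qinfinitocero} should give translation length $2m-1$, hence a $(2m-1)$-gon.

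For (4), properness is formal: by Lemma \ref{lemaproj} the scheme $\widetilde Q$ (resp. $\widetilde Q_\infty$) is closed in $\widetilde P$, so after restriction to $X_i$ and passage to the quotient by the properly discontinuous action of $\mathbb Y$ the space $Q_i$ is a closed subfamily of the proper family $P_i\to X_i$; a closed subscheme of a proper morphism is proper. For flatness the cleanest route is to observe that $Q_i$ is cut out inside the flat family $P_i\to X_i$ by the single extra relation ($U=1$ or $W=1$ at the torus level, extended to $\widetilde P$), and that on every fibre this equation defines a curve not containing any two-dimensional component; thus $Q_i$ is a relative effective Cartier divisor in $P_i\to X_i$ and is therefore flat over $X_i$. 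Alternatively, since each fibre found in (1)--(3) is a connected curve of arithmetic genus $1$ (an elliptic curve, a gon of $\mP^1$'s and a nodal rational curve all have $p_a=1$) of constant degree along the connected reduced base $X_i$, the Hilbert polynomial of the fibres is locally constant and the standard criterion --- a projective morphism over a reduced base is flat iff the fibres have locally constant Hilbert polynomial --- yields flatness. With properness, flatness, and a general elliptic fibre in hand, each of the fibres above may be called a degenerate elliptic curve, as claimed.

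The main obstacle will be the translation-length computations of (2) and (3), and above all matching the local coordinate descriptions of the deepest fibre $\widetilde P_0$ in order to certify the count $2m-1$; by contrast the arithmetic-genus and relative-divisor bookkeeping needed for (4) is routine once the fibres have been identified.
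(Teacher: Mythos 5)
Your proposal follows the paper's own proof essentially step for step in items (1)--(3): the paper also disposes of (1) by the construction of $Q_i$ together with Theorem \ref{teoremaclausura}, and for (2)--(3) it reduces, exactly as you do, to determining which points of $(\widetilde{Q})_T\cap\bigcup_{k=0}^{m-1}Z_k$ (resp. $(\widetilde{Q}_\infty)_T\cap\bigcup_{k=0}^{m-2}Z_{k,0}$) are identified by the generator $r^cs^m$ (resp. $r^{-(m-1)}s$), using the same translation behaviour you invoke: $r$ preserves each $Z_k$ while $s$ shifts the index by one, and $r^{-(m-1)}s$ sends $Z_{k,0}$ to $Z_{k-(m-1),1}$; the shifts $m$, $m-1$, $1$ (nodal curve for $m=2$) and $2m-1$ you predict are precisely the ones the paper obtains. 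The only genuine divergence is item (4): the paper settles properness and flatness by citing two general results of complex analytic geometry, \cite[Lemma 1, p.56]{complexgeom} and \cite[Corollary p.158]{complexgeom}, applied to the fibre description just obtained, whereas you propose either a relative-effective-Cartier-divisor argument or constancy of the Hilbert polynomial over a reduced base. Both of your routes are workable but carry verification costs that the paper's citations sidestep: for the divisor argument you must check that $Q_i$ is locally cut out by one equation even at the points where its fibres cross the double locus of the fibres of $P_i$ (this does hold in the local model, where the surface is $xy=0$ and the curve is cut by $z=0$, but it needs saying); for the Hilbert-polynomial argument you need a relatively ample line bundle on the analytic family (it exists, since Mumford's construction descends $\mathcal{O}(1)$ from $\widetilde{P}=\proj R$) and an actual computation that the degree of the boundary fibres equals that of the elliptic fibres, which you assert rather than verify. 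In short: same skeleton throughout, with a replaceable, slightly more algebraic justification of flatness whose bookkeeping is not quite as routine as you suggest.
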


\begin{proof}
\begin{enumerate}
 \item This is just a consequence from the construction of the morphisms and our proof of Theorem \ref{teoremaclausura}. 
   \item Using the action of the group $\mathbb{Y}_{(c,e)}$ it is just necessary to see which points in  $$\left(\widetilde{Q}_{c}\right)_{T}\cap \bigcup_{k=0}^{m-1} Z_{k}$$
   are identified by the generator $r^{c}s^{m},$ where $Z_{k}\cong\C^{\times}\times\mP^{1}$ are the irreducible component of the fibre $\widetilde{P}_{T}$ (see our proof of Proposition \ref{fibras-tilde}). This is easy because $r$ acts on each $Z_{k}$ and $s$ sends $Z_{k}$ into $Z_{k+1}.$
   \item As before, it is enough to see which points in 
   $$\left(\widetilde{Q}_{\infty}\right)_{T}\cap \bigcup_{k=0}^{m-2} Z_{k,0}$$
   are identified by the generator $r^{-(m-1)}s,$ where $Z_{a,b}\cong\mP^{1}\times\mP^{1}$ are the irreducible components of $\widetilde{P}_{T}$ (see Proposition \ref{fibras-tilde}). This is an easy computation using that $r^{-(m-1)}s$ maps $Z_{k,0}$ to $Z_{k-(m-1),1}$ and the description of the intersections given by \cite[Part II, Proposition 2.15]{Hulek}.
   \item Given the above description of the fibers, the result follows from \cite[Lemma 1, p.56]{complexgeom} and \cite[Corollary p.158]{complexgeom}.
\end{enumerate}

\end{proof}

Having established which are the degenerate elliptic curves that arises from our construction, Step 4 is complete and hence we have proved Theorem \ref{main} and Theorem \ref{main2} stated at the beginning of this section.

\end{document}